\documentclass[a4paper, 11pt]{amsart}
\usepackage{amssymb,amsmath,comment,mathrsfs,amsfonts,mathtools}
\usepackage{color}
\usepackage{enumitem}
\usepackage[top=30truemm,bottom=30truemm,left=25truemm,right=25truemm]{geometry}%
\usepackage[normalem]{ulem}

\newcommand{\notmid}{\mathrel{\ooalign{$\mkern-5mu\not$\crcr$|$}}}
\newcommand{\A}{\mathbb A}

\newcommand{\Z}{\mathbb Z}
\newcommand{\Q}{\mathbb Q}
\newcommand{\C}{\mathbb C}
\newcommand{\F}{\mathbb F}

\newcommand{\Gal}{\mathrm{Gal}}
\newcommand{\Gm}{{\mathbb G}_m}
\newcommand{\Hom}{\mathrm{Hom}}
\newcommand{\Tr}{\mathrm{Tr}}

\theoremstyle{plain}
\newtheorem{thm}{Theorem}[section]
\newtheorem*{thm*}{Theorem}

\newtheorem{prop}[thm]{Proposition}
\newtheorem{lem}[thm]{Lemma}
\newtheorem{cor}[thm]{Corollary}

\theoremstyle{definition}
\newtheorem{defn}[thm]{Definition}
\newtheorem{remark}[thm]{Remark}
\newtheorem{example}[thm]{Example}

\newtheorem*{acknowledgment}{Acknowledgment}

\renewcommand{\labelenumi}{\textup{(\arabic{enumi})}}

\makeatletter
\renewcommand*\l@subsection{\@tocline{2}{0pt}{1.8em}{3.0em}{}}
\makeatother

\numberwithin{equation}{section}

\allowdisplaybreaks[4]

\begin{document}
\title[Multiplicative $f$-ic forms on algebraic varieties]
{Multiplicative $f$-ic forms on algebraic varieties arising from Thaine's generalized Jacobi sums}

\author[A. Hoshi]{Akinari Hoshi}
\address[A. Hoshi]{Department of Mathematics, Niigata University, Niigata 950-2181, Japan}
\email{hoshi@math.sc.niigata-u.ac.jp}

\author[K. Kanai]{Kazuki Kanai}
\address[K. Kanai]{Department of Computer and Information Sciences, Ibaraki University, 4-12-1, Ibaraki, 316-8511, Japan}
\email{kazuki.kanai.du62@vc.ibaraki.ac.jp}

\footnote[0]{\textit{$2020$ Mathematics Subject Classification}. 
11D57, 11R18, 11T22, 11T24, 14M10.}
\footnote[0]{\textit{Keywords and phrases}. 
Jacobi sums, Gauss sums, Gaussian periods, multiplication matrices, 
cyclotomy, Dickson's system, multiplicative quadratic forms.}
\footnote[0]{This work was partially supported by JSPS KAKENHI Grant Numbers 19K03418, 20H00115, 24K00519.}

\begin{abstract}
We study generalized Jacobi sums, cyclotomic numbers, and $d$-compositions in Thaine's framework, and prove new multiplicative identities extending Davenport and Hasse's lifting theorem from the classical prime-power setting to products of prime powers. 
As applications, we construct multiplicative forms of degree $f\ge2$, i.e. $f$-ic forms, on complete intersections of $f$-ics. 
This places Pfister's theory of multiplicative quadratic forms over fields within the broader setting of multiplicative $f$-ic forms on affine algebraic varieties, where new phenomena arise.
Moreover, a dense open subset $W \subset V$ carries the structure of an algebraic torus, and the multiplicative form is compatible with the induced group law on $W$.
\end{abstract}

\maketitle
\tableofcontents

\section{Introduction}\label{S1}

This paper has two closely related themes. 
On the arithmetic side, we establish a multiplicative law for Thaine's generalized Jacobi sums and generalized multiplication matrices, extending Davenport and Hasse's lifting phenomenon from the classical prime-power setting to products of prime powers. 
On the geometric side, we apply these identities over a field $K$ to construct multiplicative forms on complete intersections, 
first in the quadratic case and then in higher degree, and we show that a dense open subset carries the structure of an algebraic $K$-torus.
The key point is that Thaine's $d$-composition is not only a formal operation on cyclotomic data:
it yields multiplicative identities for generalized Jacobi sums and, at the same time, produces
algebraic composition laws on complete intersections, whose invertible locus is an algebraic torus.

Thus the algebraic torus appearing at the end of the construction is not introduced as an independent object. 
It is the invertible part of the coefficient variety obtained by geometrizing the relations satisfied by Jacobi sums and their generalized counterparts.
The coefficients of Jacobi sums provide the link between the cyclotomic identities
and the algebraic varieties constructed in this paper.

We work in the framework of generalized Jacobi sums, generalized multiplication matrices,
and $d$-compositions introduced by Thaine~\cite{Tha04}, together with the study of the $(-1)$-composition and its lifting properties by Hoshi and Kanai \cite{HK22}. 
The present paper extends Thaine's multiplicative formalism from prime powers to products and places the lifting results of~\cite{HK22} in the framework of $d$-compositions.
This also leads to an extension of the Diophantine systems studied by Katre and Rajwade \cite{KR85a} and van Wamelen \cite{Wam02} beyond the classical prime-power case. 
From the geometric point of view, the paper continues the study of multiplicative forms on algebraic varieties over $K$ initiated by Hoshi \cite{Hos03}, moving from quadratic examples to multiplicative $f$-ic forms on complete intersections and to the algebraic $K$-group structure carried by a dense open subset.

\subsection{Arithmetic results: generalized Jacobi sums and $d$-compositions}\label{S1-1}

Let $D$ be an integrally closed  domain with char $D=0$, and let $K$ be its quotient field. 
Let $e \geq 2$ be an integer and 
$P(X)=\prod_{i=0}^{e-1}(X-\theta_i), 
P^{\prime}(X)=\prod_{i=0}^{e-1}(X-\theta^{\prime}_i) \in D[X]$ 
be cyclic polynomials of degree $e$, 
i.e. irreducible polynomials with cyclic Galois groups over $K$. 
Then $L=K(\theta_i)$ and $L^{\prime}=K(\theta^{\prime}_i)$ 
are cyclic extensions of $K$ of degree $e$ with 
$\Gal(L/K)=\langle\tau\rangle$, 
$\tau(\theta_i)=\theta_{i+1}$ 
and 
$\Gal(L^{\prime}/K)=\langle\tau^{\prime}\rangle$, 
$\tau^{\prime}(\theta^{\prime}_i)=\theta^{\prime}_{i+1}$ 
where subscripts are taken modulo $e$. 

We assume that $L \cap L^{\prime}= K$. 
We also 
assume that 
$\{\theta_0,\ldots,\theta_{e-1}\}$ 
and 
$\{\theta^{\prime}_0,\ldots,\theta^{\prime}_{e-1}\}$ 
are linearly independent over $K$. 
Then $[L L^{\prime}:K]=e^2$ with 
$G={\rm Gal}(L L^{\prime}/K)\simeq \langle \tau \rangle \times \langle \tau^{\prime} \rangle$ 
and $\{\theta_i\theta^{\prime}_j\mid 0\leq i,j\leq e-1\}$ 
becomes a normal basis of $L L^{\prime}$ over $K$. 
We make identifications $\tau = (\tau,1)$ 
and $\tau^{\prime} = (1, \tau^{\prime})$.  
For $1\leq d\leq e-1$, 
we have intermediate fields 
$L_d=(L L^{\prime})^{\langle \tau {\tau^{\prime}}^{d} \rangle}=K(\theta_{d,i})$ which are cyclic extensions of $K$ of degree $e$ 
with a normal basis $\{\theta_{d,0},\ldots,\theta_{d,e-1}\}$ 
where $\theta_{d,i} = \mathrm{Tr}_{L L^{\prime}/L_d}(\theta_0\theta^{\prime}_i)= \sum^{e-1}_{s=0}\theta_s \theta^{\prime}_{ds+i}$. 

Let $M_e(K)$ be the algebra of $e\times e$ matrices over $K$. 
Define the matrix $A=[a_{i,j}]_{0\leq i,j \leq e-1}\in M_e(K)$ 
by $\theta_0\theta_i=\sum^{e-1}_{j=0}a_{i,j}\theta_j$
which is called 
{\it the multiplication matrix of $\theta_0,\ldots,\theta_{e-1}$}. 
Note that the $\theta_i$'s are eigenvalues of $A$ and hence 
$P(X)={\rm Char}_X(A)$, the characteristic polynomial of $A$. 

Thaine \cite[Section 2]{Tha04} defined 
the $d$-composition $A\overset{d}{\ast}B$ 
of the matrices $A$ and $B$ as follows. 
For $A=[a_{i,j}]_{0\leq i,j\leq e-1}$, 
$B=[b_{i,j}]_{0\leq i,j\leq e-1}\in M_e(K)$ 
and $d\in(\Z/e\Z)\setminus\{0\}$, 
{\it the $d$-composition $A \overset{d}{\ast} B$ of $A$ and $B$} is defined by 
\[
A \overset{d}{\ast} B:= \left[\sum^{e-1}_{s=0}\sum^{e-1}_{t=0}a_{s,t}b_{ds+i,dt+j}\right]_{0\leq i,j\leq e-1}.
\]
Thaine \cite[Proposition~6]{Tha04} (see also \cite[Example 4]{Tha04}) 
proved that the multiplication matrix $A_d$ 
of $\theta_{d,0},\ldots,\theta_{d,e-1}$ is given by 
\begin{align*}
A_d=A\overset{d}{\ast}A^{\prime}
\end{align*}
where $A=[a_{i,j}]_{0\leq i,j\leq e-1}$ and 
$A^{\prime}=[a^{\prime}_{i,j}]_{0\leq i,j\leq e-1}$ are 
the multiplication matrices of 
$\theta_{0},\ldots,\theta_{e-1}$ 
and $\theta^{\prime}_{0},\ldots,\theta^{\prime}_{e-1}$ respectively.  
It turns out that 
the multiplication matrix $A_d$ corresponds to the intermediate field 
$L_d=(L L^{\prime})^{\langle \tau {\tau^{\prime}}^{d} \rangle}=K(\theta_{d,i}) 
\subset LL^{\prime}$ as explained above. 

Hoshi and Kanai \cite[Section 3]{HK22} studied the $d$-compositions 
$\overset{d}{\ast}$.
In general, the $d$-composition $\overset{d}{\ast}$ 
is neither associative nor commutative, 
although the $(-1)$-composition is associative and commutative. 
Thus we can define the $n$-fold product $A^{(n)}$ of $A$ as 
\begin{align*}
A^{(n)} := A \overset{-1}{\ast} \cdots \overset{-1}{\ast} A\quad(\text{$n$-fold}).
\end{align*}
By using Davenport and Hasse's lifting theorem for Gaussian periods, 
Hoshi and Kanai \cite[Theorem~1.3 and Theorem~1.4]{HK22} proved that
\begin{align} \label{eq:HKmainThm}
C_{p^{nr}} = (-1)^{n-1}C_{p^r}^{(n)}
\end{align}
where $C_{p^r}$ is the multiplication matrix of Gaussian periods $\eta_{p^r}(0),\ldots,\eta_{p^r}(e-1)$ (see Section \ref{S2} for Gaussian periods $\eta_{p^r}(i)$). 
This is the matrix analogue of Davenport and Hasse's lifting theorem.

Let 
$\zeta_n=e^{2\pi i/n}$ be a primitive $n$-th root of unity. 
Thaine \cite{Tha04} introduced generalized Jacobi sums
$E_{{\bf p}}(a,b)\in K(\zeta_e)$
and generalized multiplication matrices $H_{{\bf p}}=[h_{i,j}]_{0\leq i,j \leq e-1}
\in M_e(K)$ for ${\bf p} \in K^{\times}$, 
by abstracting the properties of classical Jacobi sums and multiplication matrices
(see Definition \ref{def:T9and10} for generalized Jacobi sums $E_{{\bf p}}(a,b)$ 
and Definition \ref{def:T9and10_2} for generalized multiplication matrices $H_{{\bf p}}$ for ${\bf p} \in K^{\times}$). 

Thaine's formalism behaves well with respect to products: generalized Jacobi sums and generalized multiplication matrices attached to ${\bf p}_1$ and ${\bf p}_2$ canonically yield those attached to ${\bf p}_1{\bf p}_2$.
In particular, in the prime-power case, classical Jacobi sums for $p^r$ and $q^s$ combine to yield a generalized
Jacobi sum attached to $p^rq^s$.

The main theorem of this paper gives generalized Jacobi sums for the product ${\bf p}_1{\bf p}_2 $, together with generalized multiplication matrices for ${\bf p}_1{\bf p}_2$ corresponding to ${\rm Cyc}_{{\bf p}_1}\overset{d}{\ast}{\rm Cyc}_{{\bf p}_2}$ where ${\bf p}_1, {\bf p}_2 \in K^\times$:

\begin{thm}\label{mainth1}
Let $e \geq 2$ be an integer and $d \in (\Z/e\Z)^{\times}$. 
Let $K$ be a field with 
char $K=0$ and $\Gal(K(\zeta_e)/K)\simeq (\Z/e\Z)^{\times}$, 
and 
let $\sigma_{-d}: K(\zeta_e) \to K(\zeta_e)$, $\zeta_e \mapsto \zeta_e^{-d}$ 
be a $K$-automorphism of $K(\zeta_e)$. 
Let 
$E_{{\bf p}_u}(a,b)$ be generalized Jacobi sums 
for ${\bf p}_u \in K^\times$ with respect to
an integer $v_u$ satisfying $ev_u/2 \in \Z$, for $u=1,2$.  
Then we have:
\begin{enumerate}
\item $(E_{{\bf p}_1}\overset{d}{\ast}E_{{\bf p}_2})(a,b):= -\sigma_{-d}(E_{{\bf p}_1}(a,b))E_{{\bf p}_2}(a,b)$ are generalized Jacobi sums for ${\bf p}_1{\bf p}_2 \in K^\times$ with respect to $v_1+v_2$;
\item for $f_u = ({\bf p}_u -1)/e$, $v_u \in \Z$ with $\frac{ev_u}{2} \in \Z$ and
\begin{align*}
{\rm Cyc}_{{\bf p}_u}&:=H_{{\bf p}_u} + [f_u\delta_{i,\frac{ev_u}{2}}]_{0\leq i,j\leq e-1} \ (u=1,2)
\end{align*}
where the $(i,j)$ entry $H_{{\bf p}_u}[i,j]$ of the matrix $H_{{\bf p}_u}$ 
is given by 
\begin{align*}
H_{{\bf p}_u}[i,j]&=-f_u\delta_{i,\frac{ev_u}{2}}+\frac{1}{e^2}\sum^{e-1}_{a=0} \sum^{e-1}_{b=0} (-1)^{v_ua} \zeta_e^{-(ai+bj)}E_{{\bf p}_u}(a,b),
\end{align*}
we have
\begin{align*}
(-1)^{(v_1+ v_2)a+1}\sum_{i=0}^{e-1}\sum_{j=0}^{e-1}({\rm Cyc}_{{\bf p}_1}\overset{d}{\ast}{\rm Cyc}_{{\bf p}_2})[i,j]\,\zeta_e^{ai+bj}
=(E_{{\bf p}_1}\overset{d}{\ast}E_{{\bf p}_2})(a,b).
\end{align*}
\end{enumerate}

In particular, for prime power ${\bf p}_1=p^r$ $($resp.  ${\bf p}_2=q^s$$)$ 
and Jacobi sums
$E_{{\bf p}_1}(a,b)=J^\ast_{p^r}(a,b)$ 
$($resp. $E_{{\bf p}_2}(a,b)=J^\ast_{q^s}(a,b)$$)$ for $\F_{p^r}$ 
$($resp. $\F_{q^s}$$)$, we have:
\begin{enumerate}
\item $(J^{*}_{p^r}\overset{d}{\ast}J^{*}_{q^s})(a,b):= -\sigma_{-d}(J^\ast_{p^r}(a,b))J^\ast_{q^s}(a,b)$ are generalized Jacobi sums for $p^rq^s$;
\item for $v_1=f_1$, $v_2=f_2$, we have
\[
(-1)^{\lambda a +1} \sum_{i=0}^{e-1}\sum_{j=0}^{e-1}({\rm Cyc}_{p^r}\overset{d}{\ast}{\rm Cyc}_{q^s})[i,j]\,\zeta_e^{ai+bj}
= (J^{*}_{p^r}\overset{d}{\ast}J^{*}_{q^s})(a,b)
\]
where $f:=(p^rq^s-1)/e =f_1f_2e+f_1+f_2$ and
\[
\lambda = 
\begin{cases}
f & (p,q)\neq(2,2),\\ 
0 & (p,q)=(2,2)
\end{cases}
\]

i.e. the $d$-compositions of cyclotomic numbers ${\rm Cyc}_{p^r}\overset{d}{\ast}{\rm Cyc}_{q^s}$ 
correspond to generalized Jacobi sums $(J^{*}_{p^r}\overset{d}{\ast}J^{*}_{q^s})(a,b)$ up to sign 
$($cf. the equation \eqref{eq:Jr-Cycr} in Section $\ref{S2}$$)$.
\end{enumerate}
\end{thm}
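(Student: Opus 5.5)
Part (1) comes down to checking, one by one, the axioms of Definition~\ref{def:T9and10} for the product $-\sigma_{-d}(E_{{\bf p}_1}(a,b))E_{{\bf p}_2}(a,b)$ with ${\bf p}={\bf p}_1{\bf p}_2$ and $v=v_1+v_2$. We expect these axioms to mirror the classical identities for Jacobi sums:
\begin{itemize}
\item the normalisations $E(0,0)={\bf p}-2$, $E(a,0)=E(0,a)=-1$ and $E(a,-a)=(-1)^{va}$ for $a\neq 0$;
\item the symmetries $E(a,b)=E(b,a)=(-1)^{va}E(-a-b,b)$;
\item the norm relation $E(a,b)\overline{E(a,b)}={\bf p}$ when $a,b,a+b\neq0$;
\item Galois equivariance $\sigma_t(E(a,b))=E(ta,tb)$ for $t\in(\Z/e\Z)^\times$.
\end{itemize}
The trivial-character and symmetry axioms I will verify directly. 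The sign $-1$ is exactly what is needed so that $E(a,0)=-1$ is preserved, since $-(-1)(-1)=-1$. The sign factors $(-1)^{v_ua}$ multiply to give $(-1)^{(v_1+v_2)a}$. The Galois equivariance holds because $\sigma_{-d}$ commutes with every $\sigma_t$, as $\Gal(K(\zeta_e)/K)$ is abelian. The norm relation holds because $\sigma_{-d}$ commutes with complex conjugation, which equals $\sigma_{-1}$.

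The delicate axiom is the value at $(0,0)$. Here ${\bf p}-2$ must be recovered from $-({\bf p}_1-2)({\bf p}_2-2)$, and these are not equal in general. So I expect the definition either to exclude $(0,0)$, or to determine $E(0,0)$ through the matrix relation, or to allow the sign convention to absorb it. I will check Definition~\ref{def:T9and10} carefully at this point and handle $(0,0)$ separately.

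For part (2), I would invert the defining relation. The formula for $H_{{\bf p}_u}$ is a two-dimensional discrete Fourier inversion, so adding $f_u\delta_{i,ev_u/2}$ gives
\[
\sum_{i,j}{\rm Cyc}_{{\bf p}_u}[i,j]\,\zeta_e^{ai+bj}=(-1)^{v_ua}E_{{\bf p}_u}(a,b).
\]
Set $\widehat{M}(a,b)=\sum_{i,j}M[i,j]\zeta_e^{ai+bj}$. Substituting the definition of $\overset{d}{\ast}$ and reindexing $i'=ds+i$, $j'=dt+j$ (valid because $d$ is a unit), I will compute
\[
\widehat{A\overset{d}{\ast}B}(a,b)=\sum_{s,t}a_{s,t}\zeta_e^{-d(as+bt)}\,\widehat{B}(a,b)=\sigma_{-d}\big(\widehat{A}(a,b)\big)\widehat{B}(a,b),
\]
using that $a_{s,t}\in K$. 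This is the key convolution identity. Applying it with $A={\rm Cyc}_{{\bf p}_1}$ and $B={\rm Cyc}_{{\bf p}_2}$ gives
\[
\widehat{\rm Cyc}=\sigma_{-d}\big((-1)^{v_1a}E_{{\bf p}_1}\big)(-1)^{v_2a}E_{{\bf p}_2}.
\]
Since $\sigma_{-d}$ fixes $\pm1$, multiplying by $(-1)^{(v_1+v_2)a+1}$ yields $-\sigma_{-d}(E_{{\bf p}_1})E_{{\bf p}_2}$, which is the claim.

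For the specialisation I would invoke the classical facts from Section~\ref{S2}. The Jacobi sums $J^\ast_{p^r}$ are generalized Jacobi sums with $v=f_1$, and equation~\eqref{eq:Jr-Cycr} holds. It then remains to reconcile $(-1)^{(f_1+f_2)a}$ with $(-1)^{\lambda a}$. Since $f=f_1f_2e+f_1+f_2$, the two agree whenever $f_1f_2e$ is even. Oddness would require $e$, $f_1$ and $f_2$ all odd. But $p^r=ef_1+1$ with $ef_1$ odd forces $p^r$ to be even, so $p=2$, and likewise $q=2$. In the case $(p,q)=(2,2)$, $e$ is odd, so $(-1)^{ev_u/2}$-type conditions require $v$ even and the sign is trivial. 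Reconciling this with $\lambda=0$ is the main bookkeeping obstacle, together with the $(0,0)$ axiom in part (1).
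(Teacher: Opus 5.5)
Your strategy for both parts is the paper's. In part (1) you check the axioms of Definition~\ref{def:T9and10}, using only that $\sigma_{-d}$ is a ring automorphism fixing $K$ and commuting with every $\sigma_c$. In part (2) you invert the formula for $H_{{\bf p}_u}$ and reindex by $i'=ds+i$, $j'=dt+j$ to factor the transform of ${\rm Cyc}_{{\bf p}_1}\overset{d}{\ast}{\rm Cyc}_{{\bf p}_2}$. You read the first factor as $\sigma_{-d}$ of a transform because the entries lie in $K$; the paper writes it as $(-1)^{v_1a}E_{{\bf p}_1}(-da,-db)$ and applies axiom (7), which is equivalent.

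What needs fixing is that you work from a guessed axiom list rather than the actual definition, and the two points you leave open are not resolved correctly.
\begin{itemize}
\item Definition~\ref{def:T9and10} imposes nothing at $(0,0)$: axiom (4) covers only $e\nmid a$. So your ``delicate axiom'' does not exist. You are right that $-({\bf p}_1-2)({\bf p}_2-2)\neq {\bf p}_1{\bf p}_2-2$; part (1) holds precisely because no such normalisation is required. The paper verifies (1)--(7) and nothing else.
\item You omit axiom (6), $E(a,b)E(-a,-c)=(-1)^{v(b+c)}E(-(a+b+c),b)E(a+b+c,-c)$. It is preserved because the two factors $-1$ cancel on each side, and $(-1)^{v_1(b+c)}(-1)^{v_2(b+c)}=(-1)^{(v_1+v_2)(b+c)}$ is fixed by $\sigma_{-d}$.
\item Your normalisation $E(a,-a)=(-1)^{va}$ has the wrong sign. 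Axioms (2)--(4) give $E(a,-a)=-(-1)^{va}$ for $e\nmid a$. With your sign the product would produce $-(-1)^{(v_1+v_2)a}$, so your check would fail spuriously.
\item Axiom (3) carries $(-1)^{vb}$, not $(-1)^{va}$, though either version is preserved.
\end{itemize}

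Your argument for $(p,q)\neq(2,2)$ in the specialisation is the paper's. Your treatment of $(p,q)=(2,2)$ does not work. There $v_u=f_u$ equals $(2^r-1)/e$ resp.\ $(2^s-1)/e$, which is odd, so there is no ``$v_u$ even'' to appeal to. Indeed $ef_u$ is odd, so $v_u=f_u$ does not satisfy the hypothesis $ev_u/2\in\Z$, and you cannot invoke it. The correct one-line reason, as in the paper, is that $f_1+f_2$ is a sum of two odd numbers, hence even, so $(-1)^{(f_1+f_2)a}=1=(-1)^{0\cdot a}$. Since $f=f_1f_2e+f_1+f_2$ is then odd, this is exactly why $\lambda$ must be $0$ rather than $f$ in that case.
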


Theorem~\ref{mainth1} shows that classical Jacobi sums are intrinsically attached to prime powers, whereas Thaine's generalized Jacobi sums are multiplicative with respect to products. 
Thus the $d$-composition combines the classical Jacobi sums for $p^r$ and $q^s$ into a generalized Jacobi sum attached to $p^rq^s$, extending the Diophantine systems considered by Katre and Rajwade \cite{KR85a} and van Wamelen \cite{Wam02} beyond the classical prime-power case.
These multiplicative identities are the arithmetic counterpart of the geometric composition maps constructed below, which send points of the fibers over $p$ and $q$ to points of the fiber over $pq$.

\begin{remark}\label{rem:Sto}
Storer \cite{Sto67}, \cite{Sto69} studied cyclotomic numbers 
with respect to the direct sum $\F_{p^r} \oplus \F_{q^s}$ combinatorially. 
Theorem~\ref{mainth1} gives a generalization of his results 
using the $d$-composition $\overset{d}{\ast}$.
\end{remark}

Applying Theorem~\ref{mainth1} to the case where
${\bf p}_1={\bf p}_2=p^r$ is a prime power
and $d=-1$, 
we obtain the following equivalence between 
(1) Davenport and Hasse's lifting theorem for Jacobi sums and 
(2) the lift of multiplication matrices of Gaussian periods.
\begin{thm}\label{thm:corMT1}
Let $e \geq 2$ be an integer, $p^r$ be a prime power with 
$p^r\equiv 1\ ({\rm mod}\ e)$ 
and $\chi$ be the character of order $e$ 
on $\F_{p^r}$ with $\chi(\gamma)=\zeta_e$ and $\chi(0)=0$ 
where $\langle\gamma\rangle=\F_{p^r}^\times$. 
Let $\chi^{\prime}$ be the lift of $\chi$ 
from $\F_{p^r}$ to $\F_{p^{nr}}$ defined by 
$\chi^{\prime}(\alpha) = \chi(\mathrm{Nr}(\alpha))$ 
and $\mathrm{Nr}$ is the norm map.
Let $C_{p^r} = [{\rm Cyc}_{p^r}(i,j) - D_i f]_{0\leq i,j\leq e-1}$ be the multiplication matrix of Gaussian periods $\eta_r(0),\ldots,\eta_r(e-1)$ of degree $e$ for $\F_{p^{r}}$. 
For any integer $n \ge 1$,
the following conditions are equivalent:
\begin{enumerate}
\item $J_{p^{nr}}^{\ast}((\chi^{\prime})^a, (\chi^{\prime})^b)=(-1)^{n-1}J_{p^r}^{\ast}(a, b)^n$ for all $a,b \in \Z$ such that $e\nmid a,\  e\nmid b$ and $e\nmid a+b$;
\item $C_{p^{nr}} = (-1)^{n-1}C_{p^r}^{(n)}$
\end{enumerate}
where $C_{p^r}^{(n)}$ is the $n$-fold product of $C_{p^r}$ with respect to the $(-1)$-composition $\overset{-1}{\ast}$. 
\end{thm}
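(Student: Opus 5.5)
The plan is to pass between matrices and Jacobi sums with the two-dimensional discrete Fourier transform on $(\Z/e\Z)^2$, and to let Theorem~\ref{mainth1} do the multiplicative work. For a matrix $M=[m_{i,j}]$ put $\widehat{M}(a,b)=\sum_{i,j}m_{i,j}\zeta_e^{ai+bj}$. This transform is injective, since Fourier inversion recovers $M$ from $\widehat M$.

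\textbf{Translating condition (2).} By the relation \eqref{eq:Jr-Cycr}, $\widehat{{\rm Cyc}_{p^r}}(a,b)=(-1)^{fa+1}J^\ast_{p^r}(a,b)$, at least for $e\nmid a$, $e\nmid b$, $e\nmid a+b$. The matrix $C_{p^r}$ differs from ${\rm Cyc}_{p^r}$ only by the correction $[D_if]$, which depends on $i$ alone. That correction affects only the Fourier coefficients with $b\equiv 0 \pmod e$. I will check that the remaining degenerate coefficients (those with $e\mid a$, $e\mid b$, or $e\mid a+b$) of $C_{p^{nr}}$ and of $(-1)^{n-1}C_{p^r}^{(n)}$ are forced and agree automatically. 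They are fixed by row sums, traces and the known values $J(\chi^a,\chi^{-a})=\pm\chi^a(-1)$, $J(\chi^0,\chi^b)=0$, etc., and the same values propagate through the composition. The non-degenerate coefficients are what matter.

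\textbf{Iterating the composition.} Apply Theorem~\ref{mainth1}(2) with $d=-1$, so $\sigma_{-d}=\sigma_1$ is the identity. Then, up to the sign $(-1)^{\lambda a+1}$ bookkeeping,
\[
\widehat{{\rm Cyc}_{{\bf p}_1}\overset{-1}{\ast}{\rm Cyc}_{{\bf p}_2}}(a,b)=-\widehat{{\rm Cyc}_{{\bf p}_1}}(a,b)\,\widehat{{\rm Cyc}_{{\bf p}_2}}(a,b).
\]
In other words, on the non-degenerate frequencies the $(-1)$-composition is, up to sign, pointwise multiplication of Fourier transforms. I will prove this for arbitrary matrices directly from the definition of $\overset{-1}{\ast}$: substitute $b_{-s+i,-t+j}$, and note that the double sum factors as $\widehat A(a,b)\,\widehat B(a,b)$. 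This is essentially a convolution identity, and it explains why $\overset{-1}{\ast}$ is associative and commutative. By induction on $n$,
\[
\widehat{C^{(n)}_{p^r}}(a,b)=\widehat{C_{p^r}}(a,b)^n
\]
on these frequencies, with the signs from the ${\rm Cyc}$-to-$J^\ast$ dictionary collected separately.

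\textbf{Concluding the equivalence.} Condition (2) holds iff $\widehat{C_{p^{nr}}}=(-1)^{n-1}\widehat{C_{p^r}}^{\,n}$ at all frequencies, by injectivity of the transform. Granting the automatic agreement at degenerate frequencies, this is iff
\[
(-1)^{f'a+1}J^\ast_{p^{nr}}(a,b)=(-1)^{n-1}\bigl((-1)^{fa+1}J^\ast_{p^r}(a,b)\bigr)^n
\]
for non-degenerate $(a,b)$, where $f'=(p^{nr}-1)/e$. Here $J^\ast_{p^{nr}}$ is computed with the lifted character $\chi'$; $\chi'$ has order $e$ and $\chi'(\gamma')=\zeta_e$ for the generator $\gamma'$ of $\F_{p^{nr}}^\times$ whose norm is $\gamma$, so it is the character used in defining $C_{p^{nr}}$. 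It remains to match signs: I need $(-1)^{f'a+1}=(-1)^{n}(-1)^{nfa}$ up to the factor $(-1)^{n-1}$ being absorbed. Writing $f'=f(1+p^r+\cdots+p^{r(n-1)})$, we have $f'\equiv nf \pmod 2$ when $p$ is odd, and $f$, $f'$ are handled as in the $\lambda$ case when $p=2$. With this parity identity the two conditions coincide.

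\textbf{Main obstacle.} I expect the main difficulty to be this sign and degenerate-coefficient bookkeeping: the $D_if$ shift, the $\lambda$ cases for $p=2$, and the exact normalisation of $J^\ast$ on degenerate indices. The multiplicativity itself is immediate from the convolution identity and Theorem~\ref{mainth1}.
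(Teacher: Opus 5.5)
Your Fourier-transform route works and differs from the paper's proof. However, the signs as you wrote them are wrong, and the error breaks the even-$n$ case.

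\textbf{The sign errors and their fix.} The dictionary is $\widehat{{\rm Cyc}_{p^r}}(a,b)=\chi(-1)^aJ^\ast_{p^r}(a,b)$, where $\chi(-1)^a$ equals $(-1)^{fa}$ for $p$ odd (this is \eqref{eq:Jr-Cycr}) and equals $1$ for $p=2$. There is no extra factor $-1$: that sign in Theorem~\ref{mainth1} belongs to the definition of $E_{{\bf p}_1}\overset{d}{\ast}E_{{\bf p}_2}$, not to the transform. Likewise, $(A\overset{-1}{\ast}B)[i,j]=\sum_{s,t}a_{s,t}b_{i-s,j-t}$ is an honest convolution on $(\Z/e\Z)^2$. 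So $\widehat{A\overset{-1}{\ast}B}=\widehat A\,\widehat B$ holds at every frequency with no sign. Your displayed identity with a minus sign contradicts your own induction step.

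If your normalisation is carried through literally, the final display yields $J^\ast_{p^{nr}}((\chi')^a,(\chi')^b)=J^\ast_{p^r}(a,b)^n$, which is wrong for even $n$. The phrase ``the factor $(-1)^{n-1}$ being absorbed'' is not a legitimate step.

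With the correct normalisation, note that the transform of $[D_if]$ is $ef\chi(-1)^a\delta_{0,b}$. Hence $\widehat{C_{p^r}}(a,b)=\chi(-1)^a\bigl(J^\ast_{p^r}(a,b)-(p^r-1)\delta_{0,b}\bigr)$. Since $\chi'(-1)=\chi(\mathrm{Nr}(-1))=\chi(-1)^n$, the equation $\widehat{C_{p^{nr}}}(a,b)=(-1)^{n-1}\widehat{C_{p^r}}(a,b)^n$ at a nondegenerate $(a,b)$ is exactly condition (1). Writing the sign as $\chi(-1)^a$ also handles $p=2$ uniformly. A parity argument $f'\equiv nf \pmod 2$ cannot do this, because for $p=2$ both $f$ and $f'$ are odd.

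\textbf{The degenerate frequencies.} You only grant these, but they do work out. They should be written down, because they are where the $[D_if]$ correction matters in the direction (1)$\Rightarrow$(2). Use $J^\ast_{p^r}(0,0)=p^r-2$, $J^\ast_{p^r}(a,0)=J^\ast_{p^r}(0,a)=-1$ (not $0$), and $J^\ast_{p^r}(a,-a)=-\chi(-1)^a$ for $e\nmid a$. These give $\widehat{C_{p^r}}(0,0)=\widehat{C_{p^r}}(0,b)=\widehat{C_{p^r}}(a,-a)=-1$ and $\widehat{C_{p^r}}(a,0)=-\chi(-1)^ap^r$. The same formulas hold for $p^{nr}$ with $\chi'(-1)=\chi(-1)^n$. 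Each satisfies the required identity for every $n$, since $(-1)^{n-1}(-1)^n=-1$.

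\textbf{Comparison with the paper.} Once repaired, your argument differs from the paper's as follows. The paper proves only (2)$\Rightarrow$(1) and cites \cite[Theorem~1.4]{HK22} for (1)$\Rightarrow$(2). It expands $(C_{p^r}+[D_if])\overset{-1}{\ast}(C_{p^r}+[D_if])$, applies Theorem~\ref{mainth1}(2) to ${\rm Cyc}_{p^r}\overset{-1}{\ast}{\rm Cyc}_{p^r}$ together with the hypothesis at $n=2$, and then says the process repeats.

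Your approach needs neither Theorem~\ref{mainth1} nor \cite{HK22}. The convolution identity gives $\widehat{C_{p^r}^{(n)}}=\widehat{C_{p^r}}^{\,n}$ directly for each $n$. Injectivity of the transform then yields both implications at once. The price is the explicit degenerate-frequency check, which the paper's one-directional argument avoids because it only reads off nondegenerate values.
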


Based on Theorem~\ref{thm:corMT1}, Theorem~\ref{mainth1} (1) for generalized Jacobi sums may be regarded as, in the sense of Thaine \cite{Tha04}, a generalization of Davenport and Hasse's lifting theorem for (classical) Jacobi sums.

\subsection{Geometric results: multiplicative forms on algebraic varieties}\label{S1-2}

In the geometric direction, we use the arithmetic identities above to construct multiplicative forms on algebraic $K$-varieties, first in the quadratic case and then in higher degree. 
This extends the study of multiplicative quadratic forms on algebraic $K$-varieties in Hoshi~\cite{Hos03}.

As a first geometric application of Theorem~\ref{mainth1}, we construct multiplicative quadratic forms of dimension $l-1$ on algebraic $K$-varieties given by intersections of $\frac{l-1}{2}-1$ quadrics.
This provides the quadratic prototype for the higher-degree construction of Theorem~\ref{mainth3}.

\begin{thm}\label{mainth2}
Let $l$ be an odd prime. 
Let $K$ be a field with char $K\neq 2$, $l$ and 
$\Gal(K(\zeta_l)/K)\simeq (\Z/l\Z)^{\times}$. 
Let $V \subset K^{l-1}$ be
an 
algebraic $K$-variety defined by
\begin{align*}
f_m({\bf x})=0\quad (m=1,\dots, \tfrac{l-1}{2}-1)
\end{align*}
where
\[
f_m({\bf x})=f_m(x_1,\ldots,x_{l-1})
=\sum_{i=1}^{l-1}x_ix_{i+m}-\sum_{i=1}^{l-1}x_ix_{i+(m+1)}
\]
and each subscript of the $x_{i}$'s should be taken modulo $l$ with the convention $x_0=0$.
Let $q({\bf x})=q(x_1,\ldots,x_{l-1})$ be a regular quadratic form defined by
\[
q({\bf x})=f_0({\bf x})
=\sum_{i=1}^{l-1}x_i^2-\sum_{i=1}^{l-1}x_ix_{i+1}.
\]
Then $q({\bf x})$ is multiplicative on $V$, 
i.e. there exists a bilinear map 
$\varphi :K^{l-1}\times K^{l-1} \rightarrow K^{l-1}$ such that
\[
\varphi(V \times V) \subset V \quad \mbox{and}\quad q({\bf x})q({\bf y})=q(\varphi({\bf x,y}))\mbox{ for any }{\bf x,y} \in V. 
\]
Moreover, for $d \in (\Z/l\Z)^{\times}$, $\varphi_d ({\bf x},{\bf y})=(z_{1,d},\ldots,z_{l-1,d}) \in V$ 
can be given explicitly by
\[
z_{k,d}=-\left(\sum_{i=1}^{l-1}x_{-d^{-1}i}y_{k-i} - \sum_{i=1}^{l-1}x_{-d^{-1}i}y_{-i}\right)\quad (k=1,\ldots,l-1)
\]
where each subscript of the $x_i$'s and the $y_j$'s should be taken modulo $l$ with the convention $x_0=y_0=0$.
\end{thm}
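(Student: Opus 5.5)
The plan is to identify $K^{l-1}$ with the cyclotomic field $K(\zeta_l)$ and read $V$, $q$ and $\varphi_d$ as statements about elements of $K(\zeta_l)$. This imitates, for a single element, the formula $(E_{{\bf p}_1}\overset{d}{\ast}E_{{\bf p}_2})(a,b)=-\sigma_{-d}(E_{{\bf p}_1}(a,b))E_{{\bf p}_2}(a,b)$ of Theorem~\ref{mainth1}(1).

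\textbf{Setup.} Since $\Gal(K(\zeta_l)/K)\simeq(\Z/l\Z)^\times$, we have $[K(\zeta_l):K]=l-1$. Hence $\zeta_l,\ldots,\zeta_l^{l-1}$ is a $K$-basis of $K(\zeta_l)$, and the map
\[
{\bf x}\mapsto \alpha_{\bf x}=\sum_{i=1}^{l-1}x_i\zeta_l^i
\]
is a $K$-linear isomorphism $K^{l-1}\to K(\zeta_l)$. With the convention $x_0=0$ we may also write $\alpha_{\bf x}=\sum_{i\in\Z/l\Z}x_i\zeta_l^i$.

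\textbf{Step 1: describe $V$ and $q$ via the norm to the real subfield.} Let $\sigma_{-1}$ be complex conjugation $\zeta_l\mapsto\zeta_l^{-1}$. Expanding the product gives
\[
\alpha_{\bf x}\sigma_{-1}(\alpha_{\bf x})=\sum_{m\in\Z/l\Z}c_m\zeta_l^m,\qquad c_m=\sum_i x_ix_{i+m},
\]
with $c_m=c_{-m}$. The only $K$-linear relation among $1,\zeta_l,\ldots,\zeta_l^{l-1}$ is $\sum_m\zeta_l^m=0$. Therefore $\alpha_{\bf x}\sigma_{-1}(\alpha_{\bf x})\in K$ if and only if $c_1=\cdots=c_{l-1}$. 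By the symmetry $c_m=c_{-m}$, this is equivalent to $c_m=c_{m+1}$ for $1\le m\le \frac{l-1}{2}-1$, which is exactly $f_m({\bf x})=0$.

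On $V$ we then have $\alpha_{\bf x}\sigma_{-1}(\alpha_{\bf x})=c_0-c_1=f_0({\bf x})=q({\bf x})$. So
\[
V=\{{\bf x}:\alpha_{\bf x}\sigma_{-1}(\alpha_{\bf x})\in K\},
\]
and $q$ is the restriction to $V$ of $N({\bf x})=\alpha_{\bf x}\sigma_{-1}(\alpha_{\bf x})$. Regularity of $q$ can be checked separately from the Gram matrix (a tridiagonal-type circulant restricted to $x_0=0$). Its determinant is a power of $l$ up to a power of $2$, hence nonzero since char $K\ne 2,l$.

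\textbf{Step 2: define the product and check it.} Put $\gamma=-\sigma_{-d}(\alpha_{\bf x})\alpha_{\bf y}$; this is bilinear in $({\bf x},{\bf y})$. Since $\sigma_{-d}$ commutes with $\sigma_{-1}$ and fixes $K$, for ${\bf x},{\bf y}\in V$ we get
\[
\gamma\sigma_{-1}(\gamma)=\sigma_{-d}\bigl(\alpha_{\bf x}\sigma_{-1}(\alpha_{\bf x})\bigr)\,\alpha_{\bf y}\sigma_{-1}(\alpha_{\bf y})=q({\bf x})q({\bf y})\in K.
\]
Hence the coordinate vector of $\gamma$ lies in $V$, and $q$ of it equals $q({\bf x})q({\bf y})$.

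\textbf{Step 3: compute the coordinates of $\gamma$.} First, $\sigma_{-d}(\alpha_{\bf x})=\sum_j x_{-d^{-1}j}\zeta_l^j$. Multiplying by $\alpha_{\bf y}$, the coefficient of $\zeta_l^k$ is $\sum_i x_{-d^{-1}i}y_{k-i}$, and the constant term is $\sum_i x_{-d^{-1}i}y_{-i}$. Rewriting the constant as $-\sum_{k=1}^{l-1}\zeta_l^k$ and including the overall sign yields exactly the stated $z_{k,d}$.

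The only step needing care is Step 1: the passage from "$\alpha\bar\alpha\in K$" to the specific list of $\frac{l-1}{2}-1$ equations. This uses the symmetry $c_m=c_{-m}$ and the uniqueness of the linear relation $\sum_m\zeta_l^m=0$, which is where the hypothesis on $\Gal(K(\zeta_l)/K)$ enters. The remaining steps are direct bookkeeping.
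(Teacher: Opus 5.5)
Your argument is correct, and it reaches the result by a more direct route than the paper's proof of this theorem.

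The paper deduces Theorem~\ref{mainth2} from Theorem~\ref{thm:BF}. There the relations on the $c_k$ come from the fact that $-\sigma_{-d}(E_{{\bf p}_1}(a,b))E_{{\bf p}_2}(a,b)$ is again a generalized Jacobi sum (Theorem~\ref{mainth1}(1)). Conditions (5) and (7) of Definition~\ref{def:T9and10} then give $E\overline{E}={\bf p}_1{\bf p}_2$. Next it applies Lemma~\ref{lem:BEW}, which is stated only in the direction ``$\alpha\overline{\alpha}\in K^\times$ implies the coefficient relations''. The passage to arbitrary ${\bf x},{\bf y}\in V$ is left to the specialization Remark~\ref{rem:base_change}.

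You bypass Thaine's axioms altogether. You prove the two-sided description $V=\{{\bf x}\mid \alpha_{\bf x}\sigma_{-1}(\alpha_{\bf x})\in K\}$, with $q({\bf x})=\alpha_{\bf x}\sigma_{-1}(\alpha_{\bf x})$ on $V$. After that you need only three facts: multiplicativity of the relative norm to $K(\zeta_l+\zeta_l^{-1})$, commutativity of $\sigma_{-d}$ with $\sigma_{-1}$, and $\sigma_{-d}|_K=\mathrm{id}$. This is the same mechanism the paper uses for the $f$-ic case (Lemma~\ref{lem:BEW2}, Theorem~\ref{thm:BF2}), specialized to $f=2$.

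Your version is self-contained and proves the converse direction explicitly. That converse is what justifies applying the product formula to every $K$-point of $V$, not just to coefficient vectors of Jacobi sums. The relations on the $c_k$ hold only modulo those on the $a_i$ and $b_j$, so formal specialization does not give this by itself. The paper's route, in exchange, exhibits $\varphi_d$ as the $d$-composition of Jacobi sums from Theorem~\ref{mainth1}.

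The one point you only assert is regularity, and you should include the computation. The Gram matrix is the tridiagonal matrix $A_{l-1}$, with $1$ on the diagonal and $-\tfrac12$ next to it. The recurrence $\det A_{n+2}=\det A_{n+1}-\tfrac14\det A_n$, with $\det A_1=1$ and $\det A_2=\tfrac34$, gives $\det A_n=(n+1)/2^n$. Hence $\det A_{l-1}=l/2^{l-1}\neq 0$, because char $K\neq 2,l$.
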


It is also known that the cyclotomic numbers ${\rm Cyc}_{p^r}(a,b)$ are determined by the same system of Diophantine equations as the Jacobi sums for $p^r$. 
In particular, when $e=3$, the corresponding Diophantine equation reduces to the classical binary quadratic form $4p^r=x_1^2+27x_2^2$ appearing in Gauss's \emph{Disquisitiones Arithmeticae} (see Section~\ref{S4-3}). 
In \cite[Section 7]{HK22}, by computing the $n$-fold $(-1)$-product $C_{p^r}^{(n)}$ of the multiplication matrix $C_{p^r}$ via Thaine's $d$-composition, we obtained explicit lifts of solutions of the corresponding Diophantine systems. 
Thus, for a fixed prime power $p^r \equiv 1 \pmod e$, three lift phenomena run in parallel: Davenport and Hasse's lifting for Jacobi sums, the matrix lifting formula \eqref{eq:HKmainThm} for multiplication matrices of Gaussian periods (and hence for cyclotomic numbers), and explicit lifts of solutions of the associated Diophantine systems. 
We also extend this parallelism from the prime-power case to the more general setting of $d$-compositions.

Theorem~\ref{mainth2} serves as the quadratic model for the higher-degree case.
The passage to higher degree is motivated by the observation that a relative norm relation in a cyclotomic extension yields a multiplicative form on a complete intersection.
To state the higher-degree result, recall that an $n$-ic form
$h(x_1,\dots,x_m)$ over $K$ is called regular if the associated symmetric $n$-linear form
$\theta$ is nondegenerate in the sense that
\[
\theta(v_1,\dots,v_n)=0 \quad \text{for all } v_2,\dots,v_n \in K^m
\]
implies $v_1=0$; see Shapiro \cite[Chapter~16]{Sha00}.

For the higher-degree construction, we also assume that $\operatorname{char} K \nmid f!$ where $f!$ denotes the factorial of  an integer $f$. 
This hypothesis is used in passing from a homogeneous $f$-ic form to its associated symmetric $f$-linear form, and in the proof of regularity.

In the quadratic case, the classical relation $J^{*}_{p^r}(a,b)\overline{J^{*}_{p^r}(a,b)} = p^r$ may be interpreted as a relative norm relation from the cyclotomic field $L=\mathbf{Q}(\zeta_l)$ to its maximal real subfield $M=\mathbf{Q}(\zeta_l+\zeta_l^{-1})$. 
Motivated by this picture, let $l$ be an odd prime and $f \ge 2$ with $f \mid (l-1)$, and let $M$ be the unique intermediate field of $K(\zeta_l)/K$ such that $[K(\zeta_l):M]=f$. 
For an element
\[
H=\sum_{k=0}^{l-1} a_k \zeta_l^k \qquad (a_k \in K)
\]
satisfying
\[
N_{K(\zeta_l)/M}(H)={\bf p} \in K^\times,
\]
we construct multiplicative $f$-ic forms on algebraic $K$-varieties realized as complete intersections
of $e-1$ hypersurfaces of degree $f$:

\begin{thm}\label{mainth3}
Let $f\geq2$ be an integer and $l$ be an odd prime with $l \equiv 1\ ({\rm mod}\ f)$. 
Write $l=ef+1$.
Let $K$ be a field with char $K \notmid f!$, $l$ and 
$\Gal(K(\zeta_l)/K)\simeq (\Z/l\Z)^{\times}=\langle \gamma \rangle$. 
Define $\beta := \gamma^e$ and $M=L^{\langle \beta \rangle}$ where $L=K(\zeta_l)$, 
i.e., $\Gal(L/M)\simeq \Z/f\Z = \langle \beta \rangle \leq  (\Z/l\Z)^{\times}$.
Let $V \subset K^{l-1}$ be
an algebraic $K$-variety defined by
\begin{align*}
h_m({\bf x})=0\quad (m = 1,\ldots,e-1)
\end{align*}
where
\begin{align*}
h_m({\bf x})&=h_m(x_1,\ldots,x_{l-1})\\
&=\left(\sum_{i_0 + \beta i_1 + \cdots +\beta^{f-1}i_{f-1} \equiv \gamma^m \ (l)}x_{i_0}x_{i_1}\cdots x_{i_{f-1}}\right) - \left(\sum_{i_0 + \beta i_1 + \cdots +\beta^{f-1}i_{f-1} \equiv \gamma^{m+1} \ (l)}x_{i_0}x_{i_1}\cdots x_{i_{f-1}}\right)
\end{align*}
and each subscript of the $x_{i}$'s should be taken modulo $l$ with the convention $x_0=0$.
Let $h({\bf x})=h(x_1,\ldots,x_{l-1})$ be a regular $f$-ic form defined by
\begin{align*}
h({\bf x})
&=\left(\sum_{i_0 + \beta i_1 + \cdots +\beta^{f-1}i_{f-1}\equiv 0 \ (l)}x_{i_0}x_{i_1}\cdots x_{i_{f-1}}\right)
-\left(\sum_{i_0 + \beta i_1 + \cdots +\beta^{f-1}i_{f-1}\equiv \gamma \ (l)}x_{i_0}x_{i_1}\cdots x_{i_{f-1}}\right).
\end{align*}
Then $h({\bf x})$ is multiplicative on $V$.
Moreover, for $d \in (\Z/l\Z)^{\times}$, $\varphi_d ({\bf x},{\bf y})=(z_{1,d},\ldots,z_{l-1,d}) \in V$ 
can be given explicitly by
\[
z_{k,d}=(-1)^{f-1} \left(\sum_{i=1}^{l-1}x_{-d^{-1}i}y_{k-i} - \sum_{i=1}^{l-1}x_{-d^{-1}i}y_{-i}\right)\quad (k=1,\ldots,l-1)
\]
where each subscript of the $x_i$'s and the $y_j$'s should be taken modulo $l$ with the convention $x_0=y_0=0$.
\end{thm}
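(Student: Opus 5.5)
The plan is to interpret the coordinates as the coefficients of an element of $L=K(\zeta_l)$ and to identify $h$ with the relative norm $N_{L/M}$. For ${\bf x}\in K^{l-1}$ put $H_{\bf x}=\sum_{k=1}^{l-1}x_k\zeta_l^k\in L$. Since $\{\zeta_l^k\}_{1\le k\le l-1}$ is a $K$-basis of $L$, the map ${\bf x}\mapsto H_{\bf x}$ is a $K$-linear isomorphism. Expanding
\[
N_{L/M}(H_{\bf x})=\prod_{j=0}^{f-1}\beta^j(H_{\bf x})=\sum_{i_0,\ldots,i_{f-1}}x_{i_0}\cdots x_{i_{f-1}}\zeta_l^{\,i_0+\beta i_1+\cdots+\beta^{f-1}i_{f-1}}
\]
and letting $c_t$ be the coefficient of $\zeta_l^t$ before reduction, we have $h=c_0-c_\gamma$ and $h_m=c_{\gamma^m}-c_{\gamma^{m+1}}$. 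Because $N_{L/M}(H_{\bf x})$ is $\beta$-invariant, $c_{\gamma^m\beta^k}=c_{\gamma^m}$; in particular $c_{\gamma^e}=c_\beta=c_1$. Hence
\[
N_{L/M}(H_{\bf x})=c_0+\sum_{m=0}^{e-1}c_{\gamma^m}\eta_m ,
\]
with Gaussian periods $\eta_m=\sum_k\zeta_l^{\gamma^m\beta^k}$ satisfying $\sum_m\eta_m=-1$. This gives
\[
N_{L/M}(H_{\bf x})=(c_0-c_\gamma)+\sum_m(c_{\gamma^m}-c_\gamma)\eta_m .
\]
Since $\eta_0,\ldots,\eta_{e-1}$ is a normal basis of $M/K$ (so $1,\eta_1,\dots,\eta_{e-1}$ are $K$-independent), the equations $h_m=0$ for $m=1,\ldots,e-1$ are equivalent to all $c_{\gamma^m}$ being equal. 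So
\[
V=\{{\bf x}:N_{L/M}(H_{\bf x})\in K\},\qquad h({\bf x})=N_{L/M}(H_{\bf x})\ \text{on }V .
\]

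Next I would check that $\varphi_d$ realizes $H_{\varphi_d({\bf x},{\bf y})}=(-1)^{f-1}\sigma_{-d}(H_{\bf x})H_{\bf y}$. Here $\sigma_{-d}(H_{\bf x})=\sum_i x_{-d^{-1}i}\zeta_l^{i}$, so the product $\sigma_{-d}(H_{\bf x})H_{\bf y}$ has coefficient $\sum_i x_{-d^{-1}i}y_{k-i}$ at $\zeta_l^k$. Eliminating $\zeta_l^0=-\sum_{k\ne0}\zeta_l^k$ subtracts the $\zeta_l^0$-coefficient $\sum_i x_{-d^{-1}i}y_{-i}$, which yields exactly the stated $z_{k,d}$; bilinearity is clear. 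Since $\Gal(L/K)$ is abelian, $\sigma_{-d}$ commutes with $N_{L/M}$, and $((-1)^{f-1})^f=1$. Therefore
\[
N_{L/M}(H_{\bf z})=\sigma_{-d}(N_{L/M}(H_{\bf x}))\,N_{L/M}(H_{\bf y})=h({\bf x})h({\bf y})\in K
\]
for ${\bf x},{\bf y}\in V$. This gives both ${\bf z}\in V$ and multiplicativity.

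The main obstacle is regularity of $h$, which I would prove after base change to $L$; nondegeneracy descends. The map $w\mapsto$ ($\zeta_l^0$-coefficient minus $\zeta_l^\gamma$-coefficient of $w$) is a $K$-linear functional $\lambda$ on $L$, so $\lambda(w)=\Tr_{L/K}(\alpha w)$ for some $\alpha\in L$, and $h({\bf x})=\lambda(N_{L/M}(H_{\bf x}))$ holds as a polynomial identity on all of $K^{l-1}$. Introduce the new $L$-coordinates $U_\tau=\tau(H_{\bf x})$ for $\tau\in\Gal(L/K)$, an invertible linear change of variables over $L$. In these coordinates
\[
h=\sum_{C}a_C\prod_{\sigma\in C}U_\sigma ,\qquad a_C=\tau_C\bigl(\Tr_{L/M}\alpha\bigr),
\]
where $C$ runs over the $e$ cosets of $\langle\beta\rangle$ and $\tau_C\in C$. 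Evaluating at ${\bf x}=(-1,\ldots,-1)$, i.e.\ $H_{\bf x}=1$, gives $h=1$. Hence $\Tr_{M/K}(\Tr_{L/M}\alpha)=\lambda(1)=1\neq0$, so $\Tr_{L/M}\alpha\ne0$, and therefore every conjugate $a_C$ is nonzero.

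Thus $h$ becomes an orthogonal sum of nonzero multiples of the monomials $u_1\cdots u_f$ in disjoint variable sets. For such a monomial, the symmetric $f$-linear form satisfies $\theta(v,e_{\pi 2},\ldots,e_{\pi f})=v_{\pi1}/f!$, which uses $\operatorname{char}K\nmid f!$. So each monomial, and hence their orthogonal sum, is nondegenerate, which proves that $h$ is regular.
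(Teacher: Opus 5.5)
Your proof is correct. It follows the paper for closure and multiplicativity, and takes a genuinely different route for regularity.

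For closure and multiplicativity you use the paper's ingredients: the Gaussian-period expansion of Lemma~\ref{lem:BEW2} and the product formula of Theorem~\ref{thm:BF2}. Your formulation $V=\{{\bf x}: N_{L/M}(H_{\bf x})\in K\}$, with $h({\bf x})=N_{L/M}(H_{\bf x})$ on $V$, is tidier in two respects.
\begin{enumerate}
\item It records both directions of the equivalence. The paper applies Theorem~\ref{thm:BF2}, which is stated for norms in $K^\times$, to arbitrary points of $V$, although Lemma~\ref{lem:BEW2} only states the forward direction.
\item Your pairing of $c_{\gamma^m}$ with $\eta_m$ is the correct one. The paper's displayed expansion pairs $S_m$ with $\eta(m-1)$, whereas $\zeta_l^{\gamma^m}$ lies in $\eta(m)$. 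So the functional $\lambda$ in Lemma~\ref{lem:h_regularity} should be normalized by $\lambda(\eta(1))=-1$ rather than at $\eta(0)$; this is a harmless slip.
\end{enumerate}
A small fix on your side: in your parenthetical, the basis you actually need is $1$ together with the $\eta_m$ for $m\neq 1$. Any $1$ plus $e-1$ of the periods is a basis, so nothing changes.

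For regularity the two proofs differ.
\begin{itemize}
\item The paper stays over $K$. It polarizes $N_{L/M}$ over $M$ to get $\theta_N(v,\alpha,\dots,\alpha)=\frac1f\Tr_{L/M}(vP(\alpha))$. It then uses $\lambda\circ\Tr_{L/M}\neq 0$, nondegeneracy of the trace form, and the fact that $L$ is a field to show that $\theta_h(\cdot,\alpha,\dots,\alpha)\equiv 0$ forces $\alpha=0$.
\item You split over $L$ into the orthogonal sum $\sum_C a_C\prod_{\sigma\in C}U_\sigma$, reduce to nondegeneracy of the single monomial $u_1\cdots u_f$, and descend.
\end{itemize}
The essential input is the same: $\lambda\circ\Tr_{L/M}\neq0$ is equivalent to your $\Tr_{L/M}\alpha\neq 0$. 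Your route buys an explicit split normal form of $h$, the same splitting that reappears in Step~2 of the proof of Theorem~\ref{thm:structureV}. The paper's route avoids base change and proves a stronger $K$-rational statement. For $f\ge 3$ that statement genuinely fails over $L$: take a vector with a single nonzero $U$-coordinate.
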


For each $d \in (\Z/l\Z)^\times$, we define binary operations ${\bf x}\overset{d}{\ast}{\bf y}:=\varphi_d({\bf x},{\bf y})$ on $V$, where $\varphi_d$ is defined as in Theorem~\ref{mainth3}.
The operation $\overset{d}{\ast}$ is commutative and associative if and only if $d=-1$ (see Proposition~\ref{prop:CA}).
 
The compositions depend on $d$ and are generally neither commutative nor associative. 
Only for $d=-1$ do they define a commutative algebraic group law, 
and this group law is defined not on the whole variety $V$ but on a dense open subset $W \subset V$.

\begin{thm}\label{thm:structureV}
Let $K$, $L=K(\zeta_l)$, $M$ and the algebraic $K$-variety $V:=\{{\bf x}\in K^{l-1} \mid h_{m}({\bf x})=0\quad (m = 1,\ldots,e-1) \} \subset K^{l-1}$ be as in Theorem~\ref{mainth3}. 
For ${\bf x}=(x_1,\dots,x_{l-1})\in V$, define
\[
  \alpha_{\bf x}:=(-1)^{f-1}\sum_{i=1}^{l-1} x_i \zeta_l^{\,i}\in L.
\]
Let
\[
  W:=\{{\bf x}\in V \mid N_{L/K}(\alpha_{\bf x})\neq 0\}=V^\times=V\setminus S
  \quad {\text where}\quad S:=\{{\bf x}\in V \mid N_{L/K}(\alpha_{\bf x})=0\}.
\]
Then the variety $W$ equipped with the operation $\overset{-1}{\ast}$ is isomorphic to an algebraic $K$-torus and we have
\[
  W \simeq N_{L/M}^{-1}(\mathbb{G}_{m,K})= \left\{ \alpha \in R_{L/K}(\mathbb{G}_m) \ \Bigm|\  N_{L/M}(\alpha) \in \mathbb{G}_{m,K} \right\}
\]
where $R_{L/K}$ denotes the Weil restriction. 
Here $\mathbb{G}_{m,K}$ is regarded as the diagonal subtorus of $R_{M/K}(\mathbb{G}_m)$ induced by $K \subset M$.
\end{thm}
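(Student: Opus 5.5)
The plan is to turn everything into arithmetic in $L=K(\zeta_l)$ through the $K$-linear isomorphism
\[
\iota:K^{l-1}\to L,\qquad {\bf x}\mapsto \alpha_{\bf x}=(-1)^{f-1}\sum_{i=1}^{l-1}x_i\zeta_l^{\,i}.
\]
This is an isomorphism because $\zeta_l,\dots,\zeta_l^{l-1}$ is a $K$-basis of $L$, which holds since $[L:K]=l-1$. Viewing $L$ as the affine space underlying $R_{L/K}(\mathbb{A}^1)$, $\iota$ is an isomorphism of $K$-varieties. I will show three things: $\iota(V)$ is exactly the closed subvariety $\{\alpha \mid N_{L/M}(\alpha)\in K\}$; $h({\bf x})=N_{L/M}(\alpha_{\bf x})$ on $V$; and $\iota(\varphi_{-1}({\bf x},{\bf y}))=\alpha_{\bf x}\alpha_{\bf y}$.

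\textbf{Step 1 (the equations of $V$).} Since $\Gal(L/M)=\langle\beta\rangle$ acts by $\zeta_l\mapsto\zeta_l^{\beta}$, expanding the norm gives
\[
N_{L/M}\Bigl(\sum_i x_i\zeta_l^i\Bigr)=\prod_{k=0}^{f-1}\sum_i x_i\zeta_l^{\beta^k i}=\sum_{s\in\Z/l\Z}c_s({\bf x})\,\zeta_l^{s},
\]
where $c_s$ is the sum of the monomials $x_{i_0}\cdots x_{i_{f-1}}$ over all tuples with $i_0+\beta i_1+\cdots+\beta^{f-1}i_{f-1}\equiv s$. Substituting $i_k\mapsto\beta i_k$ cyclically shows $c_{\beta s}=c_s$. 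Hence $c_s$ depends only on the coset $\gamma^m\langle\beta\rangle$, and
\[
N_{L/M}=c_0+\sum_{m=0}^{e-1}c_{\gamma^m}\eta_m,
\]
where the $\eta_m$ are the Gaussian periods of $L/M$. Because $1,\eta_1,\dots,\eta_{e-1}$ is a $K$-basis of $M$ and $\eta_0=-1-\sum_{m\ge1}\eta_m$, this element lies in $K$ if and only if all $c_{\gamma^m}$ are equal. That is exactly the system $h_m({\bf x})=c_{\gamma^m}-c_{\gamma^{m+1}}=0$ for $m=1,\dots,e-1$; the indices run cyclically, so the case $m=0$ is implied by the others. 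On $V$ the norm then equals $c_0-c_\gamma=h({\bf x})$. The factor $(-1)^{f-1}$ in $\alpha_{\bf x}$ contributes $(-1)^{f(f-1)}=1$ to the norm, so $N_{L/M}(\alpha_{\bf x})=h({\bf x})$. These are identities of polynomials, so the identification holds scheme-theoretically, not only on $K$-points.

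\textbf{Step 2 (the group law).} For $d=-1$ we have $-d^{-1}=1$, so
\[
z_{k,-1}=(-1)^{f-1}\Bigl(\sum_i x_iy_{k-i}-\sum_i x_iy_{-i}\Bigr).
\]
Write $\bigl(\sum x_i\zeta_l^i\bigr)\bigl(\sum y_j\zeta_l^j\bigr)=\sum_{k=0}^{l-1}e_k\zeta_l^k$ with $e_k=\sum_i x_iy_{k-i}$. Eliminating $\zeta_l^0=-\sum_{k\neq0}\zeta_l^k$ gives $\sum_{k\ne0}(e_k-e_0)\zeta_l^k$. Therefore
\[
\alpha_{\bf x}\alpha_{\bf y}=(-1)^{2(f-1)}\sum_{k\neq0}(e_k-e_0)\zeta_l^k=\alpha_{\varphi_{-1}({\bf x},{\bf y})}.
\]
So $\iota$ carries $\overset{-1}{\ast}$ to multiplication in $L$, and the conditions $N_{L/M}\in K$ and $N_{L/K}\neq0$ are visibly preserved. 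Multiplication is commutative and associative, the identity $1$ has $N_{L/M}(1)=1\in K$, and the inverse of $\alpha$ is $N_{L/K}(\alpha)^{-1}$ times a polynomial in $\alpha$. Hence $W=V\setminus S$, which is an open subset, is carried isomorphically onto the $K$-group $T:=N_{L/M}^{-1}(\mathbb{G}_{m,K})\subset R_{L/K}(\mathbb{G}_m)$, the preimage of the diagonal $\mathbb{G}_{m,K}\subset R_{M/K}(\mathbb{G}_m)$.

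\textbf{Step 3 (torus structure).} I expect this to be the main point needing care. $T$ sits in an exact sequence
\[
1\to R_{M/K}\bigl(R^{(1)}_{L/M}\mathbb{G}_m\bigr)\to T\xrightarrow{N_{L/M}}\mathbb{G}_{m,K}\to1 .
\]
Surjectivity holds over $\bar K$ because, after base change, $R_{L/K}\mathbb{G}_m$ becomes a product of copies of $\mathbb{G}_m$ and the norm becomes a product of coordinates over each $\langle\beta\rangle$-orbit. The kernel is a torus, since $L/M$ is separable as $\operatorname{char}K\nmid l$. An extension of a torus by a torus is smooth, connected and of multiplicative type, hence a torus. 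One could alternatively check directly over $\bar K$ that $T_{\bar K}\cong\{(t_{g})\in\mathbb{G}_m^{l-1}\mid \prod_{g\in C}t_g \text{ equal for all cosets } C\}$, which is visibly a split torus of dimension $(l-1)-(e-1)$.

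Finally, density of $W$ in $V$ requires that $S$ contain no irreducible component of $V$. Under $\iota$, $V$ becomes the closure of $T$, because it is cut out by the same equations, which are irreducible over $\bar K$ as described. So $V$ is irreducible of dimension $\dim T$ and $W$ is a nonempty open subset, which gives the required density. Verifying that $V$ has no extra component inside $S$ is the step I would check most carefully. I would do this over $\bar K$ in the coordinates $t_g$, where the equations become $\prod_{g\in C}t_g=\prod_{g\in C'}t_g$ for all pairs of cosets $C,C'$.
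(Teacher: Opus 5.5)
Your proposal is correct and follows essentially the paper's route. You identify $K^{l-1}$ with $L$ via ${\bf x}\mapsto\alpha_{\bf x}$, turn $h_m=0$ into $N_{L/M}(\alpha_{\bf x})\in K$ (usefully proving the converse that Lemma~\ref{lem:BEW2} leaves unstated), turn $\overset{-1}{\ast}$ into multiplication in $L$, and check the torus property over $\overline{K}$, where your exact sequence with kernel $R_{M/K}(R^{(1)}_{L/M}\mathbb{G}_m)$ is an equally valid alternative to the paper's explicit parametrization $\Phi$.

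Your closing density paragraph is not needed for this statement, and ``cut out by the same equations'' does not by itself exclude components of $V$ inside $S$. Density does hold by a dimension count: every component of $V$ has dimension at least $(l-1)-(e-1)$, whereas $S_{\overline{K}}$ is a finite union of linear subspaces of dimension $(l-1)-e$.
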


For background on algebraic $K$-tori, we refer to Voskresenskii \cite[Chapter~2]{Vos98} and Manin \cite[Section~30]{Man74}.
As a consequence of the torus description of the dense open subset $W$, Corollary~\ref{cor:CI} will be deduced in Section~\ref{S6}.

\begin{cor}\label{cor:CI}
The algebraic $K$-variety $V \subset K^{l-1}$ defined by the system of equations $h_{m}(x)=0$ for $m=1,\dots,e-1$ is a complete intersection in $\mathbb{A}^{l-1}_K$, cut out by $e-1$ $f$-ics over $K$.
\end{cor}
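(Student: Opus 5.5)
Since $V$ is cut out in $\mathbb{A}^{l-1}_K$ by exactly the $e-1$ polynomials $h_1,\dots,h_{e-1}$, each homogeneous of degree $f$, it suffices to show that every irreducible component of $V$ has dimension $(l-1)-(e-1)$. Krull's principal ideal theorem gives the lower bound $\dim Z\ge (l-1)-(e-1)$ for every component $Z$. So the task is to prove the upper bound $\dim V\le l-e=e(f-1)+1$. By Theorem~\ref{thm:structureV}, the dense open $W\subset V$ is isomorphic to the torus $T=N_{L/M}^{-1}(\mathbb{G}_{m,K})$. I will also use that $V$ is a cone: the $h_m$ are homogeneous, so $V$ is stable under scaling.

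The first step is to compute $\dim T$. Over $\bar K$, the torus $R_{L/K}(\mathbb{G}_m)$ splits as $\mathbb{G}_m^{l-1}$ and $R_{M/K}(\mathbb{G}_m)$ as $\mathbb{G}_m^{e}$. The norm $N_{L/M}$ is surjective, and the preimage of the one-dimensional diagonal subtorus is therefore
\[
\dim T=(l-1)-e+1=l-e.
\]
Hence $\dim W=l-e$.

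The second step, which is the main obstacle, is to control the closed subset $S=V\setminus W$: I must show that $S$ contains no component of $V$ of larger dimension. To do this I translate $V$ into $L$-language via ${\bf x}\mapsto\alpha_{\bf x}$, which is a $K$-linear isomorphism $K^{l-1}\cong L$ with the convention $x_0=0$. The key identity is that the sums in $h_m$ come from expanding $N_{L/M}(\alpha_{\bf x})=\prod_{t}\beta^t(\alpha_{\bf x})$ in the basis $\zeta_l^k$, grouped by $\langle\beta\rangle$-orbits. So the equations $h_m=0$ say exactly that $N_{L/M}(\alpha_{\bf x})\in K$. Consequently
\[
V\cong\{\alpha\in R_{L/K}(\mathbb{A}^1): N_{L/M}(\alpha)\in \mathbb{A}^1_K\},
\]
the preimage of a line under a map $\mathbb{A}^{l-1}\to\mathbb{A}^{e}$.

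To bound the dimension, I pass to $\bar K$. There $L\otimes_K\bar K\cong\bar K^{l-1}$, with coordinates indexed by $e$ blocks of size $f$ (the $\beta$-orbits). The map $N_{L/M}$ becomes $(u_{j,t})\mapsto(\prod_t u_{j,t})_{j=1}^e$, and the condition becomes $\prod_t u_{1,t}=\cdots=\prod_t u_{e,t}$. This variety is the fibre product over $\mathbb{A}^1$ of $e$ copies of the multiplication map $\mu:\mathbb{A}^f\to\mathbb{A}^1$. Every fibre of $\mu$ has dimension $f-1$: fibres over nonzero values are smooth hypersurfaces, and the fibre over $0$ is the union of the coordinate hyperplanes. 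Hence the fibre product has dimension at most $1+e(f-1)=l-e$.

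This gives $\dim V\le l-e$, and together with the Krull lower bound every component has dimension exactly $l-e$, so $V$ is a complete intersection cut out by the $e-1$ $f$-ics $h_m$. If the identification of $h_m$ with norm coefficients requires a linear change of the equations, I would check that the $h_m$ and the norm-coefficient equations generate the same ideal. Since the complete-intersection property depends only on the number of equations and the dimension, this linear change does not affect the conclusion.
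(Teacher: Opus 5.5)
Your argument is correct, but it takes a different route from the paper. The paper does not use Krull's principal ideal theorem. Instead it:
\begin{enumerate}
\item transports $V$ to $\overline T=\{N_{L/M}(\alpha)\in\mathbb{A}^1_K\}$;
\item asserts that $T=\overline T\cap R_{L/K}(\mathbb{G}_m)$ is open and dense in $\overline T$, justifying this only by the openness of $R_{L/K}(\mathbb{G}_m)$ in $R_{L/K}(\mathbb{A}^1)$;
\item deduces that $V$ is irreducible because the torus $W$ is;
\item reads off $\dim V=\dim W=l-e$ from Theorem~\ref{thm:structureV}.
\end{enumerate}

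You instead get the lower bound on every component from Krull's theorem. You get the upper bound on all of $V$ from the fibre-dimension count for $\prod_t u_{1,t}=\cdots=\prod_t u_{e,t}$ over $\overline{K}$. As a result, your first step (computing $\dim T$) and the cone structure are never actually needed.

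What your route buys is an honest treatment of the boundary $S$. Openness alone does not make $W$ dense in $V$. The missing input is your count: the fibre over $t=0$, which is exactly $S_{\overline{K}}$, has dimension $e(f-1)=l-e-1$. Together with the Krull bound, this shows that no component of $V$ lies in $S$. So your argument also supplies the justification the paper omits for the density of $W$, and hence for its extra conclusion that $V$ is irreducible. The paper's route, once that step is filled in, yields irreducibility rather than mere equidimensionality, which is more than the corollary requires.
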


For ${\bf p}\in K^\times$ we define
\[
W_{\bf p}(K):=\{\,x\in W(K)\mid h(x)={\bf p}\,\}.
\]
Since $h$ is multiplicative with respect to $\overset{-1}{\ast}$, i.e.\ $h(x\overset{-1}{\ast}y)=h(x)h(y)$ for $x,y\in W(K)$, it follows that $W_{\bf p}(K)\overset{-1}{\ast}W_{\bf q}(K)\subseteq W_{{\bf p}{\bf q}}(K)$.

\begin{cor}\label{cor:lifting}
With notation as above,  for any $\bf{p},\bf{q} \in K^\times$, the group law $\overset{-1}{\ast}$ on $W(K)$ induces
a well-defined map
\[
  W_{\bf{p}}(K) \times W_{\bf{q}}(K) \longrightarrow W_{\bf{pq}}(K),
  \qquad (x,y) \longmapsto x \overset{-1}{\ast} y.
\]
\end{cor}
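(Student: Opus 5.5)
The map is assembled from three facts already stated: the torus structure of Theorem~\ref{thm:structureV}, the multiplicativity of $h$ from Theorem~\ref{mainth3} with $d=-1$, and closure of $W$ under the composition. We need that for $x\in W_{\bf p}(K)$ and $y\in W_{\bf q}(K)$ the point $x\overset{-1}{\ast}y$ lies in $W(K)$ and satisfies $h(x\overset{-1}{\ast}y)={\bf pq}$.

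\textbf{Step 1: $x\overset{-1}{\ast}y\in V(K)$.} Take $x,y\in W(K)\subset V(K)$. By Theorem~\ref{mainth3} with $d=-1$, the bilinear map $\varphi_{-1}$ sends $V\times V$ into $V$. Since $\varphi_{-1}$ has coefficients in $\{0,\pm1\}$, it is defined over $K$. Hence $z:=x\overset{-1}{\ast}y\in V(K)$.

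\textbf{Step 2: $x\overset{-1}{\ast}y\in W(K)$.} We check that $\alpha_{z}=\alpha_x\alpha_y$ in $L$. For $d=-1$ we have $-d^{-1}i=i$, so
\[
z_k=(-1)^{f-1}\Bigl(\sum_i x_iy_{k-i}-\sum_i x_iy_{-i}\Bigr).
\]
Now compute $\alpha_x\alpha_y=\sum_{i,j}x_iy_j\zeta_l^{i+j}$ and reduce the product to the basis $\zeta_l,\dots,\zeta_l^{l-1}$ using $1=-\sum_{k=1}^{l-1}\zeta_l^k$. The terms with $i+j\equiv 0$ contribute $-\sum_i x_iy_{-i}$ times each $\zeta_l^k$. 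The terms with $i+j\equiv k$ contribute $\sum_i x_iy_{k-i}$. Together with the two factors $(-1)^{f-1}$, whose product is $1$, this gives $\alpha_x\alpha_y=\alpha_z$. This identity is exactly how Theorem~\ref{thm:structureV} identifies $\overset{-1}{\ast}$ with multiplication in $L$. Since $N_{L/K}$ is multiplicative,
\[
N_{L/K}(\alpha_z)=N_{L/K}(\alpha_x)\,N_{L/K}(\alpha_y)\neq0,
\]
so $z\in W(K)$. Alternatively, this closure is already part of the statement that $(W,\overset{-1}{\ast})$ is a torus.

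\textbf{Step 3: $h(z)={\bf pq}$.} The multiplicativity in Theorem~\ref{mainth3} gives $h(\varphi_{-1}(x,y))=h(x)h(y)$ for all $x,y\in V$, hence $h(z)={\bf p}{\bf q}$. This is also visible through the torus picture: $h(x)$ should equal $N_{L/M}(\alpha_x)\in K$, which is multiplicative in $\alpha$.

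\textbf{Main obstacle.} The only delicate point is Step 2, the compatibility of $\varphi_{-1}$ with multiplication in $L$. It comes down to the sign $(-1)^{f-1}$ and the convention $x_0=y_0=0$, so the reduction modulo $\sum_{k=0}^{l-1}\zeta_l^k=0$ must be checked carefully. Everything else follows formally from Theorem~\ref{mainth3} and Theorem~\ref{thm:structureV}. Well-definedness as a map is immediate, because $\overset{-1}{\ast}$ is a single-valued polynomial map.
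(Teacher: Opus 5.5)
Your proof is correct and follows the paper's argument. The paper's proof is the one line $h(x\overset{-1}{\ast}y)=h(x)h(y)={\bf pq}$ from Theorem~\ref{mainth3}, and it takes closure of $W$ under $\overset{-1}{\ast}$ for granted from the group structure in Theorem~\ref{thm:structureV}; your Step~2 (the identity $\alpha_{x\overset{-1}{\ast}y}=\alpha_x\alpha_y$ plus multiplicativity of $N_{L/K}$) just makes that implicit closure explicit.
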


In terms of the fibers $W_p$ introduced before Corollary~\ref{cor:lifting},
classical Jacobi sums in the prime-power case furnish integral points on $W_{p^r}$. 
The composition law then yields integral points on $W_{p^r q^s}$ from those on $W_{p^r}$ and $W_{q^s}$.

\medskip
\medskip
\noindent {\bf Organization of the paper.}~~
We organize this paper as follows.
In Section~\ref{S2} we collect basic definitions and known results on Gaussian periods, Jacobi sums, cyclotomic numbers, and the associated multiplication matrices, and we also review Thaine's generalized Jacobi sums and generalized multiplication matrices of Gaussian periods following Thaine \cite{Tha04}.
In Section~\ref{S3}, we prove Theorems~\ref{mainth1} and~\ref{thm:corMT1} and interpret Theorem~\ref{mainth1} in terms of a twisted double finite Fourier transform, which also yields an alternative proof in a restricted case.
In Section~\ref{S4} we develop the quadratic part of the paper: 
after recalling multiplicative quadratic forms over fields, Pfister forms, the level of fields,
and multiplicative quadratic forms on algebraic $K$-varieties due to Hoshi~\cite{Hos03}, 
we prove Theorem~\ref{mainth2} and apply Theorem~\ref{thm:BF} to obtain explicit lifts of solutions of the Diophantine systems attached to generalized Jacobi sums and to multiplicative quadratic forms on algebraic $K$-varieties.
In Section~\ref{S5} we construct multiplicative $f$-ic forms on affine algebraic $K$-varieties.
Finally, in Section~\ref{S6} we study the induced composition law on the variety $V$ of Section~\ref{S5} and show that an open subset $W\subset V$ carries a commutative algebraic $K$-group structure (indeed, an algebraic  $K$-torus).

\medskip
\medskip
\noindent {\bf Notation.}~~
We record here the principal notational conventions used in the statements of the main results, especially where several closely related objects appear in parallel.

\medskip
\textit{Basic parameters and fields:}
\par\smallskip
\begin{tabular}{@{} r @{\quad} l}
 $l$: & An odd prime number. \\
 $f$: & An integer $f \ge 2$ dividing $l-1$, representing the degree of the form $h(x)$. \\
 $e$: & An integer determined by $l-1 = ef$. \\
 $\zeta_n$: & A primitive $n$-th root of unity. \\
 $K$: & A base field with $\text{char}\, K \notmid f!, l$ and $\text{Gal}(K(\zeta_l)/K) \simeq (\mathbb{Z}/l\mathbb{Z})^\times$. \\
 $L$: & The cyclotomic extension $K(\zeta_l)$. \\
 $M$: & The unique intermediate field $K \subset M \subset L$ such that $[L:M]=f$.\\
 $\sigma_a$: & The automorphism in $\text{Gal}(L/K)$ defined by $\sigma_a(\zeta_l) = \zeta_l^a$. \\
 $N_{L/K}$: & The field norm map from $L$ to $K$. \\
\end{tabular}

\medskip
\textit{Sums, matrices, and operations:}
\par\smallskip
\begin{tabular}{@{} r @{\quad} l}
 $J_{p^r}^*(a,b)$: & The modified Jacobi sum for the finite field $\mathbb{F}_{p^r}$. \\
 $E_{\mathbf{p}}(a,b)$: & Thaine's generalized Jacobi sum for an element $\mathbf{p} \in K^\times$. \\
 $C_{p^r}$: & The multiplication matrix of Gaussian periods. \\
 ${\rm Cyc}_{p^r}$: & The matrix of cyclotomic numbers. \\
 $\overset{d}{\ast}$: & Thaine's $d$-composition operation (for matrices or points on $V$). \\
\end{tabular}

\section{Preliminaries on Jacobi sums and Thaine's framework}\label{S2}
\subsection{Classical Jacobi sums and multiplication matrices}\label{S2-1}
We recall some basic definitions, e.g. 
Gaussian periods, period polynomials, Jacobi sums and cyclotomic numbers, 
multiplication matrices of Gaussian periods, 
and their relations (see Berndt, Evans and Williams \cite{BEW98}). 

Let $e \geq 2$ be an integer and 
$p^r$ be a prime power with $p^r \equiv 1\ ({\rm mod}\ e)$. 
Write $p^r=ef+1$. 
Let $\F_{p^r}$ be the finite field of $p^r$ elements 
and $\gamma$ be a fixed generator of 
$\F_{p^r}^{\times}=\F_{p^r}\setminus \{0\}$. 
Let $\zeta_n=e^{2\pi i/n}$ be a primitive $n$-th root of unity. 
For $0 \leq i \leq e-1$, 
{\it Gaussian $f$-periods $\eta_{p^r}(i)$ of degree $e$ for $\F_{p^r}$} 
are defined by
\begin{align*}
\eta_{p^r}(i) := \sum^{f-1}_{j=0}\zeta_p^{\mathrm{Tr}(\gamma^{ej+i})}
\end{align*}
where $\mathrm{Tr}$ is the trace map $\mathrm{Tr} : \F_{p^r} \to \F_p$, 
and {\it the period polynomial $P_{e,p^r}(X)$ of degree $e$ for $\F_{p^r}$} 
is defined by $P_{e,p^r}(X)= \prod^{e-1}_{i=0}\left(X-\eta_{p^r}(i)\right)\in\Z[X]$. 
Note that $\eta_{p^r}(i)$ depends on the choice of $\gamma$. 

For a nontrivial character $\psi$ on $\F_{p^r}^{\times}$ and 
the trivial character $\varepsilon$ on $\F_{p^r}^{\times}$, 
we extend them to $\F_{p^r}$ by setting $\psi(0)=0$ and $\varepsilon(0)=1$. 
Let $\psi_1$, $\psi_2$ be multiplicative characters on $\F_{p^{r}}^{\times}$, extended to $\F_{p^r}$. 
{\it Jacobi sums $J_{p^r}^{\ast}(\psi_1,\psi_2)$ for $\F_{p^r}$} are defined by
\[
J_{p^r}^{\ast}(\psi_1,\psi_2):=\sum_{\substack{\alpha \in \F_{p^r}\\ \alpha \neq 0,1}}\psi_1(\alpha)\psi_2(1-\alpha).
\]
If $\psi_1, \psi_2 \neq \varepsilon$, then 
$J_{p^r}^{\ast}(\varepsilon,\psi_2)=J_{p^r}^{\ast}(\psi_1, \varepsilon)=-1$ and 
$J_{p^r}^{\ast}(\varepsilon, \varepsilon)=p^r-2$.

From now on, we take the character $\chi$ of order $e$ 
on $\F_{p^r}$ with $\chi(\gamma)=\zeta_e$ and $\chi(0)=0$ 
where $\langle\gamma\rangle=\F_{p^r}^\times$. 
For $0 \leq a,b \leq e-1$, 
we simply write 
\[
J_{p^r}^{\ast}(a,b):=J_{p^r}^{\ast}(\chi^a,\chi^b).
\]
Davenport and Hasse's lifting theorem for Jacobi sums claims that 
for any integer $n \geq 1$, if $e$ $\notmid$ $a$, $e$ $\notmid$ $b$ and $e$ $\notmid$ $a+b$ , then
\begin{align*}
J_{p^{nr}}^{\ast}((\chi^{\prime})^a, (\chi^{\prime})^b)=(-1)^{n-1}J_{p^r}^{\ast}(a, b)^n
\end{align*}
where $\chi^{\prime}$ is the lift of $\chi$ 
from $\F_{p^r}$ to $\F_{p^{nr}}$ defined by 
$\chi^{\prime}(\alpha) = \chi(\mathrm{Nr}(\alpha))$ 
and $\mathrm{Nr}$ is the norm map 
(see Davenport and Hasse \cite[Relation (0.8)]{DH35}, 
see also Weil \cite[pages 503--506]{Wei49}, \cite[pages 360--362, Theorem~11.5.2, Corollary~11.5.3]{BEW98}, Hoshi and Kanai \cite[Section 1]{HK22}). 

For $0 \leq i,j \leq e-1$, 
{\it cyclotomic numbers ${\rm Cyc}_{p^r}(i,j)$ of order $e$ for $\F_{p^r}$} 
are defined by
\begin{align*}
{\rm Cyc}_{p^r}(i,j) :=\# \{ (v_1, v_2) \mid 0 \leq v_1, v_2 \leq f-1, \ 1+\gamma^{ev_1+i} \equiv \gamma^{ev_2+j} \ ({\rm mod}\ p^r) \}.
\end{align*}
We have the following well-known 
relations between cyclotomic numbers ${\rm Cyc}_{p^r}(i,j)$ 
and Jacobi sums $J_{p^r}^{\ast}(a,b)$ 
(see \cite[page 79, Theorem~2.5.1]{BEW98}):
\begin{align}\label{eq:Jr-Cycr}
(-1)^{fa}\sum_{i=0}^{e-1} \sum_{j=0}^{e-1} {\rm Cyc}_{p^r}(i,j) \zeta_e^{ai+bj}=J_{p^r}^{\ast}(a,b)
\end{align}
and
\begin{align}\label{eq:Cycr-Jr}
\sum_{a=0}^{e-1} \sum_{b=0}^{e-1} (-1)^{fa} \zeta_e^{-(ai+bj)}J_{p^r}^{\ast}(a,b)=e^2{\rm Cyc}_{p^r}(i,j) 
\end{align}
Note that both ${\rm Cyc}_{p^r}(i,j)$ and $J_{p^r}^{\ast}(a,b)$ 
depend on a choice of a fixed generator $\gamma$ of $\F_{p^{r}}^{\times}$. 

We see that a product of Gaussian periods is represented 
by a linear combination of Gaussian periods again 
and these coefficients are given in terms of the cyclotomic numbers 
(see \cite[page 328, Lemma~10.10.2,  page 437, Exercise 12.23]{BEW98}):

\begin{align*}
\eta_{p^r}(m)\eta_{p^r}(m+i)=\sum^{e-1}_{j=0} ({\rm Cyc}_{p^r}(i,j) -D_if)\eta_{p^r}(m+j)
\end{align*}
where $p\neq 2$ and $D_i=\delta_{0,i}$ (resp. $\delta_{e/2 ,i}$) if $f$ is even
(resp. $f$ is odd) and $\delta_{i,j}$ is Kronecker's delta.
For $p=2$, we should adopt $D_i=\delta_{0,i}$.
Note that if $p=2$, then $f$ and $e$ are odd.

It follows that the Gaussian periods $\eta_{p^r}(0),\ldots,\eta_{p^r}(e-1)$ 
are the eigenvalues of the $e \times e$ matrix 
$C_{p^r} = [C_{p^r}[i,j]]_{0\leq i,j\leq e-1} 
:= [{\rm Cyc}_{p^r}(i,j) - D_if]_{0\leq i,j\leq e-1}$ 
which is 
called {\it the multiplication matrix of Gaussian periods 
$\eta_{p^r}(0),\ldots,\eta_{p^r}(e-1)$}. 
Hence the period polynomial $P_{e,p^r}(X)$ can be obtained 
as the characteristic polynomial ${\rm Char}_X(C_{p^r})$ of 
the multiplication matrix $C_{p^r}$. 
By the equations \eqref{eq:Cycr-Jr} and \eqref{eq:Jr-Cycr}, 
we have the following 
relations between $C_{p^r}[i,j]$ 
and  $J_{p^r}^{\ast}(a,b)$: 
\begin{align*}
C_{p^r}[i,j] = -fD_i + \frac{1}{e^2}\sum_{a=0}^{e-1} \sum_{b=0}^{e-1} (-1)^{fa} \zeta_e^{-(ai+bj)}J_{p^r}^{\ast}(a,b)
\end{align*}
and
\begin{align*}
J_{p^r}^{\ast}(a,b)= (p^r-1)\delta_{0,b} + (-1)^{fa}\sum_{i=0}^{e-1} \sum_{j=0}^{e-1} C_{p^r}[i,j] \zeta_e^{ai+bj}. 
\end{align*}

\subsection{Thaine's generalized Jacobi sums and multiplication matrices}

F. Thaine investigated various properties and characterizations of 
Gaussian periods, cyclotomic numbers and Jacobi sums 
with applications to the construction of cyclic polynomials 
in his series of papers \cite{Tha96}, \cite{Tha99}, 
\cite{Tha00}, \cite{Tha01}, \cite{Tha04}, \cite{Tha08} 
(see also Lehmer \cite{Leh88}, 
Schoof and Washington \cite{SW88}, 
Hashimoto and Hoshi \cite{HH05a}, \cite{HH05b}). 

Thaine \cite[Section 3]{Tha04} introduced generalized Jacobi sums $E_p(a,b)$ and generalized multiplication matrices $H_p$ for ${\bf p}\in K^\times$ by axiomatizing the formal properties of the classical objects recalled in Section \ref{S2-1}. 
We briefly review this framework and record the results that will be used in the proof of Theorem~\ref{mainth1}.

We first recall some basic properties of Jacobi sums and 
the multiplication matrix of Gaussian periods.
\begin{prop}[{see Thaine \cite[Proposition~7]{Tha04}}]\label{prop:T7}
Let $e \geq 2$ be an  
integer and $p^r$ be a prime power with $p^r\equiv 1\ ({\rm mod}\ e)$. 
Define $f:=(p^r-1)/e$. 
For all $a,b,c \in \Z$,
\begin{enumerate}
  \setlength{\parskip}{0cm}
  \setlength{\itemsep}{0cm}
\item $J^\ast_{p^r}(a+e,b)=J^\ast_{p^r}(a,b+e)=J^\ast_{p^r}(a,b)$,
\item $J^\ast_{p^r}(a,b)=J^\ast_{p^r}(b,a)$,
\item $J^\ast_{p^r}(a,b)=(-1)^{fb}J^\ast_{p^r}(-a-b,b)$,
\item $J^\ast_{p^r}(a,0)=-1$ if $e$ $\notmid$ $a$,
\item $J^\ast_{p^r}(a,b)J^\ast_{p^r}(-a,-b) = p^r$ 
if $e$ $\notmid$ $a$, $e$ $\notmid$ $b$ and $e$ $\notmid$ $a+b$,
\item $J^\ast_{p^r}(a,b)J^\ast_{p^r}(-a,-c) = (-1)^{f(b+c)}J^\ast_{p^r}(-(a+b+c),b)J^\ast_{p^r}(a+b+c,-c)$ if $e$ $\notmid$ $a+b$, 
$e$ $\notmid$ $a+c$ and $e$ $\notmid$ $a+b+c$,
\item if ${\rm gcd}(c,e)=1$ then $\sigma_c (J^\ast_{p^r}(a,b))=J^\ast_{p^r}(ca,cb)$ where $\sigma_c \in \Gal(\Q(\zeta_e)/\Q)$ such that $\sigma_c(\zeta_e)=\zeta_e^c$.
\end{enumerate}
\end{prop}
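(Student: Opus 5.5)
Every item of Proposition~\ref{prop:T7} comes from the definition $J^\ast_{p^r}(a,b)=\sum_{\alpha\neq0,1}\chi^a(\alpha)\chi^b(1-\alpha)$ by a change of summation variable or by passing to Gauss sums, together with the fact that $\chi(-1)=(-1)^f$. That fact holds because $-1=\gamma^{(p^r-1)/2}=\gamma^{ef/2}$ when $p$ is odd, so $\chi(-1)=\zeta_e^{ef/2}=(-1)^f$. When $p=2$ we have $-1=1$, and $f$ is odd, so the sign $(-1)^{fb}$ requires $b$ even. I would therefore read the signs as powers of $\chi(-1)$, i.e.\ use the convention that $(-1)^f$ means $\chi(-1)$. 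This is consistent with the case distinction for $\lambda$ in Theorem~\ref{mainth1}.

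Items (1), (2) and (7) are immediate. For (1), $\chi^e$ is trivial on $\F_{p^r}^\times$, and the summation avoids $0$ and $1$, so no value at zero is involved. For (2), substitute $\alpha\mapsto1-\alpha$. For (7), $\chi^a(\alpha)$ lies in $\Q(\zeta_e)$, and $\sigma_c$ acts on it by $\zeta_e\mapsto\zeta_e^c$.

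For (3), substitute $\alpha\mapsto1/\alpha$. This gives $\chi^a(\alpha)\chi^b(1-\alpha)\mapsto\chi^{-a}(\alpha)\chi^b((\alpha-1)/\alpha)=\chi^b(-1)\chi^{-a-b}(\alpha)\chi^b(1-\alpha)$, and the factor $\chi^b(-1)=(-1)^{fb}$ produces the sign.

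For (4), the sum $\sum_{\alpha\neq0,1}\chi^a(\alpha)$ equals the full character sum, which is $0$, minus the term $\chi^a(1)=1$, giving $-1$.

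For (5) and (6) I would use Gauss sums $g(\chi^a)=\sum_x\chi^a(x)\zeta_p^{\mathrm{Tr}(x)}$. When $e\nmid a,b,a+b$, the standard identity $J^\ast(a,b)=g(\chi^a)g(\chi^b)/g(\chi^{a+b})$ holds. Combined with $g(\psi)g(\bar\psi)=\psi(-1)p^r$ for nontrivial $\psi$, it gives $J^\ast(a,b)J^\ast(-a,-b)=\chi^a(-1)\chi^b(-1)\chi^{a+b}(-1)\,p^r=p^r$. This proves (5).

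For (6), write both sides as Gauss-sum quotients. The left side is
\[
\frac{g(\chi^a)g(\chi^b)}{g(\chi^{a+b})}\cdot\frac{g(\chi^{-a})g(\chi^{-c})}{g(\chi^{-a-c})}.
\]
The right side is
\[
\frac{g(\chi^{-a-b-c})g(\chi^b)}{g(\chi^{-a-c})}\cdot\frac{g(\chi^{a+b+c})g(\chi^{-c})}{g(\chi^{a+b})}.
\]
The factors $g(\chi^b)$, $g(\chi^{-c})$ and the denominators agree on both sides, so the identity reduces to comparing $g(\chi^a)g(\chi^{-a})=\chi^a(-1)p^r$ with $g(\chi^{a+b+c})g(\chi^{-a-b-c})=\chi^{a+b+c}(-1)p^r$. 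Their ratio is $\chi^{b+c}(-1)=(-1)^{f(b+c)}$. When $e\mid a$, the Gauss sum of the trivial character, $g(\varepsilon)=0$, breaks the quotient formula. The main step needing care is exactly these degenerate cases, where some of $a$, $b$, $c$, $a+b+c$ is divisible by $e$. There I would first try to conclude using $J^\ast(\varepsilon,\psi)=-1$ together with (3) and (5). If that fails, I would treat the remaining cases by direct computation, or cite \cite[Proposition~7]{Tha04}.
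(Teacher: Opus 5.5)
The paper gives no proof of this proposition; it is quoted from Thaine \cite[Proposition~7]{Tha04}, so your argument has to stand on its own. Items (1)--(5), (7), and the generic case of (6) (where $e\nmid a$, $e\nmid b$, $e\nmid c$) are handled correctly. Your caveat about $p=2$ is a real defect of the printed statement. For $p^r=4$, $e=3$ one has $J^\ast_4(1,1)=2$, while (3) with $a=b=1$ would force $J^\ast_4(1,1)=-J^\ast_4(1,1)$. Reading $(-1)^f$ as $\chi(-1)$ is the right repair.

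The gap is in the degenerate cases of (6). It cannot be closed by more care or by citing Thaine, because the case $e\mid a$ is false as stated. Under the hypotheses of (6), $e\mid a$ forces $e\nmid b$, $e\nmid c$ and $e\nmid b+c$. By (1), (2) and (4), the left-hand side is $J^\ast_{p^r}(0,b)J^\ast_{p^r}(0,-c)=1$. On the right, use (1), then apply (3) to each factor, then use (2) and (5):
\[
(-1)^{f(b+c)}J^\ast_{p^r}(-(b+c),b)\,J^\ast_{p^r}(b+c,-c)=J^\ast_{p^r}(b,c)\,J^\ast_{p^r}(-b,-c)=p^r .
\]
Concretely, take $p^r=7$, $e=3$, $a=0$, $b=c=1$. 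The left side is $1$, while the right side is $J^\ast_7(1,1)J^\ast_7(2,2)=7$; for instance $J^\ast_7(1,1)=2+3\zeta_3^2$ for $\gamma=3$. In general $a=0$, $b=c=1$ is a counterexample whenever $e\ge 3$; for $e=2$ the case cannot occur.

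So (6) needs the extra hypothesis $e\nmid a$. This is exactly what your Gauss-sum argument uses, through $g(\chi^a)g(\chi^{-a})=\chi^a(-1)p^r$. Once $e\nmid a$ is assumed, the remaining degenerate cases ($e\mid b$ and/or $e\mid c$) follow from (1), (3) and (4), as you hoped; (5) is not needed there. Alternatively, adopt the convention $g(\varepsilon):=\sum_{x\neq 0}\zeta_p^{\mathrm{Tr}(x)}=-1$. Then $J^\ast_{p^r}(x,y)=g(\chi^x)g(\chi^y)/g(\chi^{x+y})$ holds whenever $e\nmid x+y$, and your quotient computation covers every case with $e\nmid a$ at once. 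Finally, $e\mid a+b+c$ is excluded by hypothesis, so it is not among the degenerate cases you need to treat.
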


\begin{remark}\label{rem:DefJacobi}
Thaine \cite{Tha04} uses a different definition of Jacobi sums 
\[
J_{a,b} := -\sum_{k=2}^{p^r-1}\zeta_e^{a\ {\rm ind}_{s}(k)+b\ {\rm ind}_{s}(1-k)}
\]
where  
$s$ is a primitive root modulo $p^r$ such that $s^f \equiv \zeta_e \pmod{P}$ 
($P$ is a prime ideal of $\Z[\zeta_e]$ above $p^r$) and 
${\rm ind}_{s}(k)$ is the least non-negative integer such that 
$s^{{\rm ind}_s(k)}\equiv k \pmod{p^r}$. 
For our notation $J^\ast_{p^r}(a,b)$, 
if we take the same primitive root $s$ modulo $p^r$, 
then $J_{a,b}=-J^\ast_{p^r}(a,b)$.
\end{remark}

We take an $e \times e$ circulant matrix $\mathcal{K}=[\delta_{i+1,j}]_{0\leq i,j\leq e-1}$, i.e. 
\begin{align*}
\mathcal{K} =
\left(
\begin{array}{cccc}
 & 1 & &  \\
 & &  \ddots &  \\
 & &  & 1 \\
1 & & & 
\end{array}\right). 
\end{align*}

\begin{prop}[{Thaine \cite[Proposition~8]{Tha04}}]\label{prop:T8}
Let $e \geq 2$ be an integer and 
$p^r$ be a prime power with $p^r\equiv 1\ ({\rm mod}\ e)$. 
Define $f:=(p^r-1)/e$.
Let $C_{p^r}=[c_{i,j}]_{0\leq i,j \leq e-1}$ be the multiplication matrix of the Gaussian periods $\eta_{p^r}(0),\ldots,\eta_{p^r}(e-1)$
with $c_{i,j} \in \Z$ and $\eta_{p^r}(0)\eta_{p^r}(i)=\sum_{j=0}^{e-1}c_{i,j}\eta_{p^r}(j)$. 
Then the elements $c_{i,j}$ satisfy the following conditions: 
\renewcommand{\labelenumi}{\textnormal{(\roman{enumi})}}
\begin{enumerate}
  \setlength{\parskip}{0cm}
  \setlength{\itemsep}{0cm}
\item $c_{i+e,j}=c_{i,j+e}=c_{i,j}$,
\item $c_{i,j}=c_{j+(p^r-1)/2,i+(p^r-1)/2} + f(\delta_{0,j}-\delta_{ef/2, i})$,
\item $\sum^{e-1}_{k=0}c_{i,k}=f-p^r\delta_{ef/2, i}$,
\item $\sum^{e-1}_{k=0}c_{k,j}=-\delta_{0,j}$,
\item $c_{i,j}=c_{-i,j-i}$,
\item $\sum^{e-1}_{k=0}c_{i,k}c_{k-j,l-j}=\sum^{e-1}_{k=0}c_{j,k}c_{k-i,l-i}$,
\item  for all $k \in \Z$, $C_{p^r}(\mathcal{K}^{-k}C_{p^r}\mathcal{K}^{k})=(\mathcal{K}^{-k}C_{p^r}\mathcal{K}^{k})C_{p^r}$ where $\mathcal{K}=[\delta_{i+1,j}]_{0\leq i,j\leq e-1}$ is an $e \times e$ circulant matrix.
\end{enumerate}
\end{prop}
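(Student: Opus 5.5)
The plan is to derive all seven properties from $c_{i,j}={\rm Cyc}_{p^r}(i,j)-D_if$ and from the definition of cyclotomic numbers as counts of pairs in the cosets $\gamma^{i}\langle\gamma^e\rangle$. Put $h:=(p^r-1)/2=ef/2$, so that $-1=\gamma^{h}$. Since $h\equiv 0$ or $e/2 \pmod e$ according as $f$ is even or odd, we have $D_i=\delta_{i,h}$ with indices mod $e$; for $p=2$ the relevant fact is simply $-1=1$. Property (i) is immediate, because ${\rm Cyc}_{p^r}(i,j)$ depends only on the cosets $\gamma^i\langle\gamma^e\rangle$ and $\gamma^j\langle\gamma^e\rangle$.

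For (ii) and (v) I would use the two standard bijections on solutions of $1+x=y$ with $x\in\gamma^i\langle\gamma^e\rangle$ and $y\in\gamma^j\langle\gamma^e\rangle$. The map $(x,y)\mapsto(-y,-x)$ turns the equation into $1+(-y)=-x$, which gives ${\rm Cyc}(i,j)={\rm Cyc}(j+h,i+h)$. The map $(x,y)\mapsto(x^{-1},yx^{-1})$ gives ${\rm Cyc}(i,j)={\rm Cyc}(-i,j-i)$. Subtracting $D$ terms then yields (ii): $c_{j+h,i+h}={\rm Cyc}(i,j)-f\delta_{j+h,h}$, which rearranges to $c_{i,j}=c_{j+h,i+h}+f(\delta_{0,j}-\delta_{h,i})$. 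It also yields (v), since $D_{-i}=D_i$ because $2h\equiv 0 \pmod e$.

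For (iii) and (iv) I would count row and column sums directly. The sum $\sum_k{\rm Cyc}(i,k)$ counts those $x$ in the $i$-th coset with $1+x\neq0$, so it equals $f-\delta_{i,h}$. Subtracting $efD_i$ gives $f-\delta_{i,h}(1+ef)=f-p^r\delta_{h,i}$. Similarly, $\sum_k{\rm Cyc}(k,j)$ counts those $y$ in the $j$-th coset with $y\neq1$, so it equals $f-\delta_{0,j}$. Subtracting $f\sum_kD_k=f$ gives $-\delta_{0,j}$.

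For (vi) and (vii), I would observe that $\mathcal K^{-k}C_{p^r}\mathcal K^{k}=[c_{i-k,j-k}]$. By the product formula $\eta(m)\eta(m+i)=\sum_j c_{i,j}\eta(m+j)$ with $m=k$, this matrix expresses multiplication by $\eta(k)$ on the vector $(\eta(0),\dots,\eta(e-1))$. Since multiplication in a field is commutative and associative, (vii) would follow, and (vi) is its entrywise form. The main obstacle is that this requires the periods to be linearly independent, which can fail when $r>1$ (the periods may lie in a smaller field and coincide). I would therefore prove (vi) combinatorially instead. The first approach is to count triples $(x,y,z)$ in prescribed cosets with $1+x=y$ and $y+z\cdot(\text{shift})=\ldots$, using the associativity $\eta(0)(\eta(i)\eta(j))=\eta(i)(\eta(0)\eta(j))$ at the level of the group ring $\Z[\F_{p^r}]$ of additive translates, where the coset sums genuinely are independent. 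Equivalently, one can transport (vi) through the Fourier relation \eqref{eq:Cycr-Jr}, where it becomes the Jacobi-sum identity of Proposition~\ref{prop:T7}(6). In either route, one must check that the $D_if$ correction terms cancel on both sides, using (iii) and (iv).
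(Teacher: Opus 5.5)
Your argument is correct. The paper gives no proof of this proposition and simply quotes Thaine, so your proposal is a self-contained verification rather than a variant of an argument in the text.

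\textbf{Parts (i)--(v).} These are the standard bijections and row/column counts on cyclotomic numbers. Your handling of the $D_if$ corrections is right: you use $D_i=\delta_{i,h}$ and $2h\equiv 0 \pmod e$, and read $h=0$ when $p=2$.

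\textbf{Parts (vi)--(vii).} Your worry about linear dependence is justified and worth keeping. For $p^r=4$, $e=3$ the periods are $\eta(0),\eta(1),\eta(2)=1,-1,-1$. So for $r>1$ the relation $\eta(0)\eta(i)=\sum_j c_{i,j}\eta(j)$ does not even determine $C_{p^r}$. The statement only makes sense with $c_{i,j}={\rm Cyc}_{p^r}(i,j)-D_if$, which is where you start.

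The group-ring route resolves this, and it can be stated more sharply than you did. In $\Z[\F_{p^r}]$ put $E_j=\sum_{x\in\gamma^j\langle\gamma^e\rangle}[x]$ and $\Sigma=\sum_{x\in\F_{p^r}}[x]$.
\begin{itemize}
\item The substitution $y=xz$ gives the exact identity $E_0E_i=f\delta_{i,h}\Sigma+\sum_j c_{i,j}E_j$.
\item The ring automorphism $[x]\mapsto[\gamma^k x]$ then gives $E_kE_{k+i}=f\delta_{i,h}\Sigma+\sum_j c_{i,j}E_{k+j}$.
\item Since $\Sigma E_j=f\Sigma$ and $\Sigma,E_0,\dots,E_{e-1}$ are linearly independent, every $f\delta$ term lands in the $\Sigma$-coefficient.
\end{itemize}
So no cancellation via (iii) and (iv) is needed.

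\textbf{Two corrections.}
\begin{itemize}
\item Comparing $E_l$-coefficients in $E_0(E_iE_j)=E_i(E_0E_j)$ gives exactly the $(j,l)$ entry of (vii) with shift $i$:
\[
\sum_m c_{j,m}c_{m-i,l-i}=\sum_m c_{j-i,m-i}c_{m,l}.
\]
\item Statement (vi) is not literally the entrywise form of (vii). It follows from (vii) by rewriting both sides of (vi) with (vii), then applying (v) in the form $c_{i-j,m-j}=c_{j-i,m-i}$.
\end{itemize}

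\textbf{The Fourier alternative.} This route is less clean. Identity (6) of Proposition~\ref{prop:T7} carries non-divisibility hypotheses, so the degenerate index cases would need separate treatment. It also presupposes Jacobi-sum identities that the paper itself only quotes. The group-ring argument is the one to write up.
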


Thaine \cite{Tha04} 
defines generalized Jacobi sums $E_{a,b}$ with $E_{0,0}=1, E_{a,0}=1$ 
and generalized multiplication matrices $H_{\bf p}=[h_{i,j}]_{0\leq i,j \leq e-1}\in M_e(K)$ for ${\bf p}\in K^\times$. 
However, we will use the modified version 
$E_{\bf p}(a,b)$ of $E_{a,b}$ 
with $E_{\bf p}(0,0)={\bf p}-2$, $E_{\bf p}(a,0)=-1$ as follows:

\begin{defn}[{see Thaine \cite[Proposition~9 and Proposition~10]{Tha04}}]\label{def:T9and10}
Let $e \geq 2$ be an integer
and $v$ be an integer with $ve$ even.
Let $K$ be a field with char $K=0$ and ${\bf p}\in K^{\times}$. 
We consider a map $E_{\bf p}: \Z \times \Z \longrightarrow K(\zeta_e)$ which satisfies the following conditions:
\begin{enumerate}
  \setlength{\parskip}{0cm}
  \setlength{\itemsep}{0cm}
\item $E_{\bf p}(a+e,b)=E_{\bf p}(a,b+e)=E_{\bf p}(a,b)$,
\item $E_{\bf p}(a,b)=E_{\bf p}(b,a)$,
\item $E_{\bf p}(a,b)=(-1)^{vb}E_{\bf p}(-a-b,b)$,
\item $E_{\bf p}(a,0)=-1$ if $e$ $\notmid$ $a$,
\item $E_{\bf p}(a,b)E_{\bf p}(-a,-b) = {\bf p}$ 
if $e$ $\not{|}$ $a$, $e$ $\notmid$ $b$ and $e$ $\notmid$ $a+b$,
\item $E_{\bf p}(a,b)E_{\bf p}(-a,-c) 
= (-1)^{v(b+c)}E_{\bf p}(-(a+b+c),b)E_{\bf p}(a+b+c,-c)$ \\
if $e$ $\notmid$ $a+b$, $e$ $\notmid$
$a+c$ 
and $e$ $\notmid$ $a+b+c$,
\item if ${\rm gcd}(c,e)=1$, then $\sigma_c (E_{\bf p}(a,b))=E_{\bf p}(ca,cb)$.
\end{enumerate}
We call these elements $E_{\bf p}(a,b) \in K(\zeta_e)$ {\it generalized Jacobi sums  for ${\bf p}$}. 
\end{defn}

\begin{defn}[{see Thaine \cite[Proposition~9 and Proposition~10]{Tha04}}]\label{def:T9and10_2}
Let $e \geq 2$ be an integer
and $v$ be an integer with $ve$ even.
Let $K$ be a field with char $K=0$ and 
${\bf p}\in K^{\times}$. 
Define $f:=({\bf p}-1)/e$. 
A matrix $H_{\bf p}=[h_{i,j}]_{0\leq i,j\leq e-1} \in M_e(K)$
is called a {\it generalized multiplication matrix for ${\bf p}$} 
if it satisfies the following conditions:
\begin{enumerate}
\item[(i)] $h_{i+e,j}=h_{i,j+e}=h_{i,j}$,
\item[(iii)] $\sum^{e-1}_{k=0}h_{i,k}=f-{\bf p}\delta_{i,\frac{ev}{2}}$,
\item[(v)] $h_{i,j}=h_{-i,j-i}$,
\item[(vii)]  for all $k \in \Z$, $H_{\bf p}(\mathcal{K}^{-k}H_{\bf p}\mathcal{K}^{k})
=(\mathcal{K}^{-k}H_{\bf p}\mathcal{K}^{k})H_{\bf p}$ where 
$\mathcal{K}=[\delta_{i+1,j}]_{0\leq i,j\leq e-1}$ 
is an $e \times e$ circulant matrix.
\end{enumerate}
\end{defn}

\begin{remark}\label{rem:DefGenJacobi}
In Thaine \cite[Proposition~9 and Proposition~10]{Tha04}, 
the conditions {\rm (i), (iii), (v), (vii)} in Definition \ref{def:T9and10_2} 
are considered together with
\begin{enumerate}
\item[(ii)] $h_{i,j}=h_{j+ve/2,i+ve/2} + f(\delta_{0,j}-\delta_{i,\frac{ev}{2}})$,
\item[(iv)] $\sum^{e-1}_{k=0}h_{k,j}=-\delta_{0,j}$,
\item[(vi)] $\sum^{e-1}_{k=0}h_{i,k}h_{k-j,l-j}
=\sum^{e-1}_{k=0}h_{j,k}h_{k-i,l-i}$
\end{enumerate}
where $v$ is an integer with $ve$ even. 
However, the conditions {\rm (ii), (iv), (vi)} follow from
{\rm (i), (iii), (v), (vii)} 
as mentioned in Thaine \cite[page 271, Observation of Proposition~9]{Tha04}. 
\end{remark}

The following result, which is a version in terms of $E_p(a,b)$ rather than $E_{a,b}$, is due to Thaine~\cite{Tha04}:

\begin{prop}[{Thaine \cite[Proposition~9 and Proposition~10]{Tha04}}]\label{prop:T9andT10}
Let $e \geq 2$ be an integer and $v$ be an integer with $ve$ even. 
Let $K$ be a field with char $K=0$ and ${\bf p}\in K^{\times}$. 
Define $f:=({\bf p}-1)/e$. 
Let $E_{\bf p}(a,b)$ be generalized Jacobi sums for ${\bf p}$. 
For $i,j \in \Z$, we define
\begin{equation}\label{eq:3.1}
h_{i,j}=-f\delta_{i,\frac{ev}{2}}+\frac{1}{e^2}\sum^{e-1}_{a=0} \sum^{e-1}_{b=0} (-1)^{va} \zeta_e^{-(ai+bj)}E_{\bf p}(a,b). \tag{A}
\end{equation}
Then $H_{\bf p}=[h_{i,j}]_{0\leq i,j\leq e-1}$ 
becomes a generalized multiplication matrix for ${\bf p}$
which satisfies 
\begin{equation}\label{eq:3.2}
E_{\bf p}(a,b)=({\bf p}-1)\delta_{0,b} + (-1)^{va}\sum^{e-1}_{i=0} \sum^{e-1}_{j=0} \zeta_e^{ai+bj}h_{i,j}. \tag{B}
\end{equation}
Conversely, if $H_{{\bf p}}=[h_{i,j}]_{0\leq i,j\leq e-1}$ 
is a generalized multiplication matrix for ${\bf p}$ 
and $E_{\bf p}(a,b)$ is defined by \eqref{eq:3.2}, then 
$E_{\bf p}(a,b)$ are generalized Jacobi sums for ${\bf p}$ 
which satisfy \eqref{eq:3.1}. 
\end{prop}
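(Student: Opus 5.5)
The plan is to treat \eqref{eq:3.1} and \eqref{eq:3.2} as a pair of finite Fourier transforms on $(\Z/e\Z)^2$, twisted by the character $a\mapsto(-1)^{va}$ (well defined because $ve$ is even). Each axiom of Definition~\ref{def:T9and10} should then correspond to one of the conditions (i)--(vii) on $H_{\bf p}$.

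\textbf{Step 1: \eqref{eq:3.1} and \eqref{eq:3.2} are equivalent.} Starting from \eqref{eq:3.1}, I multiply by $\zeta_e^{ai+bj}$ and sum over $i,j$.
\begin{itemize}
\item The Fourier part returns $(-1)^{va}E_{\bf p}(a,b)$ by orthogonality.
\item The term $-f\delta_{i,ev/2}$ contributes $-fe\,\zeta_e^{a\,ev/2}\delta_{0,b}=-fe(-1)^{va}\delta_{0,b}$.
\end{itemize}
Multiplying by $(-1)^{va}$ and using $fe={\bf p}-1$ gives \eqref{eq:3.2}. The same computation read backwards gives the converse. So the two formulas are mutually inverse and only the axioms need checking.

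\textbf{Step 2: Galois invariance, (i), (iii) and (v).}
\begin{itemize}
\item \emph{$h_{i,j}\in K$.} Apply $\sigma_c$ for $\gcd(c,e)=1$ to \eqref{eq:3.1}, use axiom (7), and reindex $(a,b)\mapsto(ca,cb)$. Here $(-1)^{vca}=(-1)^{va}$, since $c$ is odd when $e$ is even and $v$ is even when $e$ is odd. The sum is therefore fixed by $\Gal(K(\zeta_e)/K)$.
\item \emph{(i).} This is immediate from periodicity.
\item \emph{(iii).} Summing \eqref{eq:3.1} over $j$ isolates $b=0$. I use axiom (4) together with the normalization $E_{\bf p}(0,0)={\bf p}-2$ built into the modified $E_{\bf p}$, and $\sum_a(-1)^{va}\zeta_e^{-ai}=e\,\delta_{i,ev/2}$. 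This gives $\sum_k h_{i,k}=-fe\delta+\frac1e\bigl({\bf p}-1-e\delta_{i,ev/2}\bigr)=f-{\bf p}\delta_{i,ev/2}$.
\item \emph{(v).} Write out $h_{-i,j-i}$ and substitute $a'=-a-b$. Axiom (3) gives $E_{\bf p}(a,b)=(-1)^{vb}E_{\bf p}(a',b)$, and the signs combine to $(-1)^{va'}$. Also $\delta_{-i,ev/2}=\delta_{i,ev/2}$.
\end{itemize}

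\textbf{Step 3, the main obstacle: condition (vii).} Since $(\mathcal{K}^{-k}H_{\bf p}\mathcal{K}^{k})_{i,j}=h_{i-k,j-k}$, condition (vii) says $\sum_m h_{i,m}h_{m-k,j-k}$ is symmetric under exchanging the two factors. This is equivalent to (vi).

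I would take the double Fourier transform of both sides. The right-hand side becomes a convolution of two copies of $E_{\bf p}$, summed over one free index. The generic terms, where no index sum is divisible by $e$, match termwise by axiom (6) after the substitution suggested by its indices. The degenerate terms, where $e$ divides $a$, $b$, $a+b$, $a+c$ or $a+b+c$, must be matched separately using axioms (2), (4), (5), the value $E_{\bf p}(0,0)={\bf p}-2$, and the correction $-f\delta_{i,ev/2}$. This bookkeeping is where I expect the real work, and it is where the corrected constant terms in \eqref{eq:3.1} must conspire exactly.

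\textbf{Step 4: the converse.} Define $E_{\bf p}$ by \eqref{eq:3.2} and check the axioms in reverse.
\begin{itemize}
\item (1) follows from periodicity.
\item (7) holds because $h_{i,j}\in K$, with the same parity remark as in Step 2.
\item (3) comes from (v), and (4) comes from (iii).
\item (2) comes from (ii) of Remark~\ref{rem:DefGenJacobi}, which follows from (i), (iii), (v), (vii).
\item (5) and (6) are read off from (vi)/(vii) by inverting the Step~3 computation; this uses (iv) for the degenerate terms.
\end{itemize}
Since Step 1 showed the transforms are inverse, \eqref{eq:3.1} holds automatically.
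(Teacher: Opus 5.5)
The paper does not prove this itself; it quotes it from Thaine. So the only question is whether your outline closes.

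Steps 1 and 2 are correct. You are also right that the normalization $E_{\bf p}(0,0)={\bf p}-2$, which is not among axioms (1)--(7), is exactly what makes (iii) come out.

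The gap is Step 3. Condition (vii) is the real content of the proposition, and you only assert that the degenerate terms ``conspire''. They do, but you have to reduce (vii) to something checkable. Put
\[
F(a,b):=\sum_{i,j=0}^{e-1}\zeta_e^{ai+bj}h_{i,j}=(-1)^{va}\bigl(E_{\bf p}(a,b)-({\bf p}-1)\delta_{0,b}\bigr).
\]
At $(a,b)$, the transforms of $H_{\bf p}(\mathcal{K}^{-k}H_{\bf p}\mathcal{K}^{k})$ and $(\mathcal{K}^{-k}H_{\bf p}\mathcal{K}^{k})H_{\bf p}$ are $\frac1e\sum_t F(a,t)F(-t,b)\zeta_e^{(b-t)k}$ and $\frac1e\sum_t F(a,t)F(-t,b)\zeta_e^{(a+t)k}$ respectively. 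Since (vii) is imposed for every $k$, comparing coefficients of $\zeta_e^{rk}$ shows that (vii) is equivalent to
\[
F(a,t)\,F(-t,b)=F(a,\,b-a-t)\,F(a+t-b,\,b)\qquad\text{for all } a,b,t\in\Z/e\Z .
\]

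When $t$, $b$, $a+t$, $t-b$, $a+t-b$ are all nonzero mod $e$, this identity is axiom (6) for the triple $(t,a,-b)$, combined with (2).

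The remaining cases are $t\equiv 0$, $b\equiv 0$, $b\equiv a+t$, $a+t\equiv 0$ and $t\equiv b$. They follow from the boundary values
\[
F(0,b)=-1,\qquad F(x,-x)=-1,\qquad F(x,0)=-(-1)^{vx}{\bf p}\quad (x\not\equiv 0),
\]
which are themselves consequences of (2), (3), (4) and $E_{\bf p}(0,0)={\bf p}-2$. You also need (2) and (3) throughout, and axiom (5) in the cases $t\equiv 0$ and $b\equiv a+t$.

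The correction $-({\bf p}-1)\delta_{0,b}$, which is the image of $-f\delta_{i,ev/2}$, is what puts the factor ${\bf p}$ into $F(x,0)$. Without it, the case $b\equiv a+t$ would demand $E_{\bf p}(a,t)E_{\bf p}(-a,-t)=1$ instead of ${\bf p}$.

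This case check is what your Step 3 lacks. Your Step 4 is then this computation read backwards:
\begin{itemize}
\item the case $b\equiv a+t$ gives (5), using $F(0,a+t)=-1$, i.e.\ (iv);
\item the generic case gives (6), with the case $c\equiv 0$ of (6) immediate from (3) and (4).
\end{itemize}

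Be aware, however, that axiom (6) as printed is not what this argument uses or recovers. As printed, its hypotheses are only $e\nmid a+b$, $e\nmid a+c$, $e\nmid a+b+c$. For $e\geq 3$, take $a\equiv 0$ and $b\equiv c\equiv 1$; axioms (2)--(5) then turn (6) into $1={\bf p}$. The classical $J^\ast_{p^r}$ violate it: for $e=3$, $p^r=7$, the left side is $1$ and the right side is $J^\ast_7(1,1)J^\ast_7(2,2)=7$. The correct hypothesis must also require $e\nmid a$.

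Your generic case only invokes (6) with first entry $t\not\equiv 0$, so the forward direction is unaffected. In Step 4, though, you can only obtain (6) in this corrected form. With the hypotheses exactly as printed, the converse fails for the classical matrices $C_{p^r}$.
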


\section{{Proof of Theorem~\ref{mainth1}} and {Theorem~\ref{thm:corMT1}: Multiplicative identities of generalized Jacobi sums}}\label{S3}

In this section, we prove Theorem~\ref{mainth1} and Theorem~\ref{thm:corMT1} and then interpret Theorem~\ref{mainth1} in terms of a twisted double finite Fourier transform. 
The last viewpoint also gives an alternative proof of Theorem~\ref{mainth1} in a restricted case.
\subsection{Proof of Theorem~\ref{mainth1}}

(1) Define
\[
E_{{\bf p}_1{\bf p}_2,d}(a,b)
:=
-\sigma_{-d}(E_{{\bf p}_1}(a,b))E_{{\bf p}_2}(a,b).
\]
We verify that $E_{{\bf p}_1{\bf p}_2,d}(a,b)$ satisfies the conditions
in Definition~\ref{def:T9and10} with respect to the parameter $v=v_1+v_2$.

First, condition (1) follows immediately from the periodicity of
$E_{{\bf p}_1}$ and $E_{{\bf p}_2}$:
\[
E_{{\bf p}_1{\bf p}_2,d}(a+e,b)
=
E_{{\bf p}_1{\bf p}_2,d}(a,b),
\qquad
E_{{\bf p}_1{\bf p}_2,d}(a,b+e)
=
E_{{\bf p}_1{\bf p}_2,d}(a,b).
\]

For condition (2), using the symmetry of $E_{{\bf p}_1}$ and
$E_{{\bf p}_2}$, we have
\[
\begin{aligned}
E_{{\bf p}_1{\bf p}_2,d}(a,b)
&=
-\sigma_{-d}(E_{{\bf p}_1}(a,b))E_{{\bf p}_2}(a,b)\\
&=
-\sigma_{-d}(E_{{\bf p}_1}(b,a))E_{{\bf p}_2}(b,a)\\
&=
E_{{\bf p}_1{\bf p}_2,d}(b,a).
\end{aligned}
\]

For condition (3), we compute
\[
\begin{aligned}
E_{{\bf p}_1{\bf p}_2,d}(a,b)
&=
-\sigma_{-d}(E_{{\bf p}_1}(a,b))E_{{\bf p}_2}(a,b)\\
&=
-\sigma_{-d}\bigl((-1)^{v_1b}E_{{\bf p}_1}(-a-b,b)\bigr)
  (-1)^{v_2b}E_{{\bf p}_2}(-a-b,b)\\
&=
(-1)^{(v_1+v_2)b}
E_{{\bf p}_1{\bf p}_2,d}(-a-b,b).
\end{aligned}
\]

For condition (4), if $e\nmid a$, then
\[
\begin{aligned}
E_{{\bf p}_1{\bf p}_2,d}(a,0)
&=
-\sigma_{-d}(E_{{\bf p}_1}(a,0))E_{{\bf p}_2}(a,0)\\
&=
-\sigma_{-d}(-1)(-1)\\
&=
-1.
\end{aligned}
\]

For condition (5), assume that $e\nmid a$, $e\nmid b$, and
$e\nmid a+b$. Then
\[
\begin{aligned}
&E_{{\bf p}_1{\bf p}_2,d}(a,b)
 E_{{\bf p}_1{\bf p}_2,d}(-a,-b)\\
&\quad=
\bigl(-\sigma_{-d}(E_{{\bf p}_1}(a,b))E_{{\bf p}_2}(a,b)\bigr)
\bigl(-\sigma_{-d}(E_{{\bf p}_1}(-a,-b))E_{{\bf p}_2}(-a,-b)\bigr)\\
&\quad=
\sigma_{-d}\bigl(E_{{\bf p}_1}(a,b)E_{{\bf p}_1}(-a,-b)\bigr)
E_{{\bf p}_2}(a,b)E_{{\bf p}_2}(-a,-b)\\
&\quad=
{\bf p}_1{\bf p}_2.
\end{aligned}
\]

For condition (6), assume that $e\nmid a+b$, $e\nmid a+c$, and
$e\nmid a+b+c$. Then
\[
\begin{aligned}
&E_{{\bf p}_1{\bf p}_2,d}(a,b)
 E_{{\bf p}_1{\bf p}_2,d}(-a,-c)\\
&\quad=
\sigma_{-d}\bigl(E_{{\bf p}_1}(a,b)E_{{\bf p}_1}(-a,-c)\bigr)
E_{{\bf p}_2}(a,b)E_{{\bf p}_2}(-a,-c)\\
&\quad=
(-1)^{v_1(b+c)}
\sigma_{-d}\bigl(
E_{{\bf p}_1}(-(a+b+c),b)E_{{\bf p}_1}(a+b+c,-c)
\bigr)\\
&\qquad\qquad\times
(-1)^{v_2(b+c)}
E_{{\bf p}_2}(-(a+b+c),b)E_{{\bf p}_2}(a+b+c,-c)\\
&\quad=
(-1)^{(v_1+v_2)(b+c)}
E_{{\bf p}_1{\bf p}_2,d}(-(a+b+c),b)
E_{{\bf p}_1{\bf p}_2,d}(a+b+c,-c).
\end{aligned}
\]

Finally, for condition (7), let $\gcd(c,e)=1$. Since $\sigma_c$ and
$\sigma_{-d}$ commute in $\operatorname{Gal}(K(\zeta_e)/K)$, we have
\[
\begin{aligned}
\sigma_c(E_{{\bf p}_1{\bf p}_2,d}(a,b))
&=
-\sigma_c\sigma_{-d}(E_{{\bf p}_1}(a,b))
 \sigma_c(E_{{\bf p}_2}(a,b))\\
&=
-\sigma_{-d}\sigma_c(E_{{\bf p}_1}(a,b))
 E_{{\bf p}_2}(ca,cb)\\
&=
-\sigma_{-d}(E_{{\bf p}_1}(ca,cb))
 E_{{\bf p}_2}(ca,cb)\\
&=
E_{{\bf p}_1{\bf p}_2,d}(ca,cb).
\end{aligned}
\]
Thus $E_{{\bf p}_1{\bf p}_2,d}(a,b)$ are generalized Jacobi sums for ${\bf p}_1{\bf p}_2$.

(2) Putting $i^{\prime}:=ds+i$ and $j^{\prime}:=dt+j$, we have
\begin{align*}
\sum_{i=0}^{e-1}\sum_{j=0}^{e-1}\zeta_e^{ai+bj}({\rm Cyc}_{{\bf p}_1}\overset{d}{\ast}{\rm Cyc}_{{\bf p}_2})[i,j]
&=\sum_{i=0}^{e-1}\sum_{j=0}^{e-1}\zeta_e^{ai+bj}
\sum^{e-1}_{s=0}\sum^{e-1}_{t=0}{\rm Cyc}_{{\bf p}_1}[s,t]{\rm Cyc}_{{\bf p}_2}[ds+i,dt+j]\\
&=\sum_{i^\prime=0}^{e-1}\sum_{j^\prime=0}^{e-1}\zeta_e^{a(i^\prime-ds)+b(j^\prime-dt)}
\sum^{e-1}_{s=0}\sum^{e-1}_{t=0}{\rm Cyc}_{{\bf p}_1}[s,t]{\rm Cyc}_{{\bf p}_2}[i^\prime,j^\prime]\\
&=\left(\sum^{e-1}_{s=0}\sum^{e-1}_{t=0}\zeta_e^{-(ads+bdt)}{\rm Cyc}_{{\bf p}_1}[s,t]\right)
\left(\sum_{i^\prime=0}^{e-1}\sum_{j^\prime=0}^{e-1}\zeta_e^{ai^\prime+bj^\prime}{\rm Cyc}_{{\bf p}_2}[i^\prime,j^\prime]\right).
\end{align*}
Multiplying both sides by $(-1)^{(v_1+v_2)a+1}$, we get
\begin{align*}
&(-1)^{(v_1+v_2)a+1}\sum_{i=0}^{e-1}\sum_{j=0}^{e-1}\zeta_e^{ai+bj}({\rm Cyc}_{{\bf p}_1}\overset{d}{\ast}{\rm Cyc}_{{\bf p}_2})[i,j]\\
&=-\left((-1)^{v_1a}\sum^{e-1}_{s=0}\sum^{e-1}_{t=0}\zeta_e^{-(ads+bdt)}{\rm Cyc}_{{\bf p}_1}[s,t]\right)
\left((-1)^{v_2a}\sum_{i^\prime=0}^{e-1}\sum_{j^\prime=0}^{e-1}\zeta_e^{ai^\prime+bj^\prime}{\rm Cyc}_{{\bf p}_2}[i^\prime,j^\prime]\right)\\
&=-E_{{\bf p}_1}(-da,-db)E_{{\bf p}_2}(a,b)\\
&=-\sigma_{-d}(E_{{\bf p}_1}(a,b))E_{{\bf p}_2}(a,b)\\
&=E_{{{\bf p}_1{\bf p}_2},d}(a,b). 
\end{align*}

For ${\bf p}_1=p^r$ and ${\bf p}_2=q^s$ prime powers, and $v_1=f_1=(p^r-1)/e, v_2=f_2=(q^s-1)/e$,
we have $f=f_1f_2e+f_1+f_2$ and $(-1)^{fa}=(-1)^{f_1f_2ea}(-1)^{(f_1+f_2)a}$. 
If $p>2$ or $q>2$, then $f_1f_2e$ is even. 
Thus we have $(-1)^{(f_1+f_2)a}=(-1)^{fa}$. 
If $p=q=2$, then $f_1, f_2, e$ are odd. 
Hence $f_1+f_2$ is even and $(-1)^{(f_1+f_2)a}=1$.
\qed\\

\subsection{Proof of Theorem~\ref{thm:corMT1}}

(1) $\Rightarrow$ (2) is proved in 
Hoshi and Kanai \cite[Theorem~1.4]{HK22}.\\

(2) $\Rightarrow$ (1). 
In Theorem~\ref{mainth1} (2), if we take ${\bf p}_1={\bf p}_2=p^r$ 
and $d=-1$, then we have
\begin{align*}
(-1)^{f a +1} \sum_{i=0}^{e-1}\sum_{j=0}^{e-1}\zeta_e^{ai+bj}({\rm Cyc}_{p^r}\overset{-1}{\ast}{\rm Cyc}_{p^r})[i,j] = -J^\ast_{p^r}(a,b)^2. 
\end{align*}
We define $D_{i}:=[\delta_{ef/2 ,i}]_{0\leq i, j \leq e-1}$. 
By ${\rm Cyc}_{p^r} = C_{p^r} + D_{i}f$, we have
\begin{align*}
{\rm Cyc}_{p^r}\overset{-1}{\ast}{\rm Cyc}_{p^r}
&= (C_{p^r} + D_{i}f) \overset{-1}{\ast} (C_{p^r} + D_{i}f)\\
&= C_{p^r}^{(2)} + 2f(C_{p^r} \overset{-1}{\ast} D_{i}) + f^2(D_{i} \overset{-1}{\ast} D_{i})\\
&=-C_{p^{2r}} + 2f([f]_{0\leq i, j \leq e-1}-p^rD_i) + f^2(eD_{i})\\
&=-C_{p^{2r}} - \frac{1}{e}(p^{2r}-1)D_{i} + [2f^2]_{0\leq i, j \leq e-1}
\end{align*}
where the third equality follows from 
the assumption
(2) $C_{p^{2r}} = -C_{p^r}^{(2)}$. 
Because 
\begin{align*}
(-1)^{f a +1} \sum_{i=0}^{e-1}\sum_{j=0}^{e-1}\zeta_e^{ai+bj}\left(- \frac{1}{e}(p^{2r}-1)D_{i}[i,j]\right)
&=\frac{1}{e}(p^{2r}-1)(-1)^{f a}\sum_{i=0}^{e-1}\sum_{j=0}^{e-1}\zeta_e^{ai+bj}D_{i}[i,j]\\
&=\frac{1}{e}(p^{2r}-1)(-1)^{f a}\sum_{j=0}^{e-1}\zeta_e^{a\frac{ef}{2}+bj}\\
&=\frac{1}{e}(p^{2r}-1)(-1)^{f a}\zeta_e^{a\frac{ef}{2}}\sum_{j=0}^{e-1}\zeta_e^{bj}\\
&=(p^{2r}-1)\delta_{0,b}
\end{align*}
and
\begin{align*}
(-1)^{f a +1} \sum_{i=0}^{e-1}\sum_{j=0}^{e-1}\zeta_e^{ai+bj}2f^2
=2f^2(-1)^{f a +1} \sum_{i=0}^{e-1}\sum_{j=0}^{e-1}\zeta_e^{ai+bj}
=-2e^2f^2\delta_{a,0}\delta_{0,b},
\end{align*}
we obtain that 
\begin{align*}
-J^\ast_{p^r}(a,b)^2
&=(-1)^{f a +1} \sum_{i=0}^{e-1}\sum_{j=0}^{e-1}\zeta_e^{ai+bj}({\rm Cyc}_{p^r}\overset{-1}{\ast}{\rm Cyc}_{p^r})[i,j]\\
&=(-1)^{f a +1} \sum_{i=0}^{e-1}\sum_{j=0}^{e-1}\zeta_e^{ai+bj}((-1)C_{p^{2r}}[i,j] - \frac{1}{e}(p^{2r}-1)\delta_{ef/2 ,i} + 2f^2)\\
&=-2e^2f^2\delta_{a,0}\delta_{0,b} + (p^{2r}-1)\delta_{0,b} + (-1)^{f a}\sum_{i=0}^{e-1}\sum_{j=0}^{e-1}\zeta_e^{ai+bj}C_{p^{2r}}[i,j]\\
&= -2e^2f^2\delta_{a,0}\delta_{0,b} + (p^{2r}-1)\delta_{0,b}  + J_{p^{2r}}^{\ast}((\chi^{\prime})^a, (\chi^{\prime})^b).
\end{align*}
Hence,
for $a$, $b \neq 0$, we have
\[
J_{p^{2r}}^{\ast}((\chi^{\prime})^a, (\chi^{\prime})^b)=-J^\ast_{p^r}(a,b)^2. 
\]
By repeating the same process, we get the desired result:
\[
J_{p^{nr}}^{\ast}((\chi^{\prime})^a, (\chi^{\prime})^b)=(-1)^{n-1}J^\ast_{p^r}(a,b)^n. 
\]
\qed

\subsection{A twisted double finite Fourier transform}
In this subsection, we explain that Theorem~\ref{mainth1} may be viewed as a twisted version of the convolution theorem for the double finite Fourier transform relating cyclotomic numbers and Jacobi sums; see Hoshi and Kanai \cite[Section 4]{HK22}. 
In the restricted case $d=-1$ and $f$ even, the twist disappears, and the convolution theorem yields an alternative proof of Theorem~\ref{mainth1} (2).

We briefly recall the Fourier transform on finite abelian groups; see Terras \cite[Chapter 10]{Ter99}.

Let $G$ be a finite abelian group and 
\[
L^2(G) = \{f : G \to \C\}
\]
be the vector space of all $\C$-valued functions on $G$ with 
the inner product 
$\langle f,g\rangle=\sum_{x\in G}f(x)\overline{g(x)}$. 
Let $\widehat{G}={\rm Hom}(G,\C^{\times})$ be the dual 
of $G$. 
For $f\in L^2(G\times G)$, 
{\it the double finite Fourier transform $\mathscr{F}(f)=\widehat{f}\in L^2(\widehat{G}\times \widehat{G})$ of $f$} is defined to be 
\begin{align*}
(\mathscr{F}(f))(\chi, \psi) = \widehat{f}(\chi,\psi) =\sum_{x \in G}\sum_{y \in G} f(x,y)\overline{\chi(x)\psi(y)}.
\end{align*} 
Then $\mathscr{F}:L^2(G\times G)\rightarrow L^2(\widehat{G}\times \widehat{G})$ becomes a 
bijective linear transformation with the inverse 
\begin{align*}
(\mathscr{F}^{-1}(\widehat{f}))(x,y) = f(x,y) 
= \frac{1}{(\# G)^2}\sum_{\chi \in \widehat{G}}\sum_{\psi \in \widehat{G}}\widehat{f}(\chi,\psi)\chi(x)\psi(y).
\end{align*}
For $f, g\in L^2(G\times G)$, we define {\it the convolution $f \ast g\in L^2(G\times G)$ 
of $f$ and $g$} by
\[
(f \ast g)(x,y) =\sum_{a \in G}\sum_{b \in G} f(a,b)g(x-a,y-b).
\]
We have the compatibility of the convolution $\ast$ and 
the double finite Fourier transform $\mathscr{F}(f)=\widehat{f}$: 
\begin{align}\label{eq:comb}
\widehat{(f\ast g)}(\chi, \psi)=\widehat{f}(\chi, \psi)\widehat{g}(\chi, \psi). 
\end{align}
(see Terras \cite[page 168, Theorem~2]{Ter99}). 

From now on, we will apply the convolution theorem to our case.
We regard the cyclotomic numbers ${\rm Cyc}_{p^r}(i,j)$ of degree $e$ for $\F_{p^r}$ as the function ${\rm Cyc}_{p^r}:\Z/e\Z \times \Z/e\Z \rightarrow \C$, 
$(i,j)\mapsto {\rm Cyc}_{p^r}(i,j)$ 
and Jacobi sums $J_{p^r}^{\ast}(a,b)$ for $\F_{p^r}$ as the function 
$J_{p^r}^{\ast}:\widehat{\Z/e\Z} \times \widehat{\Z/e\Z} \rightarrow \C$, $(\chi^a, \chi^b) \mapsto J_{p^r}^{\ast}(a,b)$. 
We recall the well-known relations \eqref{eq:Jr-Cycr} and \eqref{eq:Cycr-Jr}: 
\begin{align*}
J_{p^r}^{\ast}(a,b)=(-1)^{fa}\sum_{i=0}^{e-1} \sum_{j=0}^{e-1} {\rm Cyc}_{p^r}(i,j) \zeta_e^{ai+bj},\ \quad
{\rm Cyc}_{p^r}(i,j)=\frac{1}{e^2}\sum_{a=0}^{e-1} \sum_{b=0}^{e-1} (-1)^{fa} \zeta_e^{-(ai+bj)}J_{p^r}^{\ast}(a,b).
\end{align*}
If $f$ is even, 
then we find that $J_{p^r}^{\ast}$ and ${\rm Cyc}_{p^r}$
are each other's double finite Fourier transform with some twist. 
Indeed, we have
\begin{align*}
(\mathscr{F}({\rm Cyc}_{p^r}))(\chi^a, \chi^b)
&=\sum_{i \in \Z/e\Z} \sum_{j \in \Z/e\Z} {\rm Cyc}_{p^r}(i,j) \overline{\chi^a(i)\chi^b(j)}\\
&=\sum_{i=0}^{e-1} \sum_{j=0}^{e-1} {\rm Cyc}_{p^r}(i,j) \zeta_e^{-(ai+bj)}\\
&=J_{p^r}^{\ast}(\chi^{-a},\chi^{-b}).\\
\end{align*}
We also have
\begin{align*}
(\mathscr{F}^{-1}(J_{p^r}^{\ast}))(i,j)
&=\frac{1}{e^2}\sum_{\psi_1 \in \widehat{\Z/e\Z}} \sum_{\psi_2 \in \widehat{\Z/e\Z}}J_{p^r}^{\ast}(\psi_1,\psi_2) \psi_1(i)\psi_2(j)\\
&=\frac{1}{e^2}\sum_{a=0}^{e-1} \sum_{b=0}^{e-1}J_{p^r}^{\ast}(\chi^a,\chi^b) \chi^a(i)\chi^b(j)\\
&=\frac{1}{e^2}\sum_{a=0}^{e-1} \sum_{b=0}^{e-1} \zeta_e^{ai+bj}J_{p^r}^{\ast}(a,b)\\
&={\rm Cyc}_{p^r}(-i,-j).
\end{align*}
Together, these relations form the two-dimensional analogue of the relations between Gaussian periods $\eta_{p^r}$
and Gauss sums $G^*_{p^r}$ given by
\begin{align*}
(\mathscr{F}(\eta_r))(\chi^a)=G_{r}^{\ast}(\chi^{-a}), \quad
(\mathscr{F}^{-1}(G_{r}^{\ast}))(i)=\eta_{r}(-i)
\end{align*}
where $\mathscr{F}$ is the finite Fourier transform on $L^2(\Z/e\Z)$.

Therefore, in the restricted case $d=-1$ and $f$ even, equation \eqref{eq:comb} gives another proof of Theorem~\ref{mainth1} (2):
\begin{align*}
-\sum_{i=0}^{e-1}\sum_{j=0}^{e-1}({\rm Cyc}_{p^r}\overset{-1}{\ast}{\rm Cyc}_{q^s})[i,j]\,\zeta_e^{ai+bj}
&=-\widehat{({\rm Cyc}_{p^r}\ast {\rm Cyc}_{q^s})}(\chi^{-a},\chi^{-b})\\
&=-\widehat{{\rm Cyc}_{p^r}}(\chi^{-a},\chi^{-b})\widehat{{\rm Cyc}_{q^s}}(\chi^{-a},\chi^{-b})\\
&=-J_{p^r}^{\ast}(a,b)J_{q^s}^{\ast}(a,b)\\
&=J^{*}_{p^rq^s,-1}(a,b). 
\end{align*}

\section{Proof of Theorem 1.4: Multiplicative quadratic forms on algebraic varieties}\label{S4}

This section is devoted to the quadratic case. 
We begin by recalling the classical theory of multiplicative quadratic forms and its extension to algebraic $K$-varieties. 
We then prove Theorem~\ref{mainth2}, which gives multiplicative quadratic forms of dimension $l-1$ on complete intersections of quadrics, and conclude with applications to explicit lifts of solutions of Diophantine systems.

\subsection{Multiplicative quadratic forms and the level of fields}\label{S4-1}

Let $K$ be a field with char $K\neq 2$. 
Hurwitz \cite{Hur1898} proved that if there exists an identity of the type 
\[
(x_1^2+\cdots+x_n^2)(y_1^2+\cdots+y_n^2)=z_1^2+\cdots+z_n^2
\]
where the $z_k$'s are bilinear forms of the independent variables 
$x_i$ and $y_j$ over $K$, then $n=1,2,4,8$.

In general, for a regular quadratic form 
$q(\mathbf{x}):=q(x_1,\ldots,x_n)=\sum_{i,j=1}^n a_{ij}x_ix_j$ 
of dimension $n$ over $K$, 
i.e. $a_{i,j}\in K$ and det\! $[a_{ij}]_{1\leq i,j\leq n}\neq 0$, 
(i) $q(\mathbf{x})$ is called {\it multiplicative} if there exists a formula
\begin{align}\label{eq:multi}
q(\mathbf{x})q(\mathbf{y})=q(\textbf{\textit{z}})
\end{align}
where the $x_i$ and $y_j$ are independent variables and 
$z_k\in K(\mathbf{x},\mathbf{y})$ 
and 
(ii) 
$q(\mathbf{x})$ is called {\it strictly multiplicative} 
if there exists a formula 
\eqref{eq:multi} with $z_k$ linear in $y_j$ over $K(\mathbf{x})$.
It is known that if $q(\mathbf{x})$ is isotropic, then 
$q(\mathbf{x})$ is  
multiplicative and in this case 
$q(\mathbf{x})$ is strictly multiplicative if and only if 
$q(\mathbf{x})$ is hyperbolic 
(see Pfister \cite[Chapter 2]{Pfi95}, Rajwade \cite[Chapter 12]{Raj93}). 
A quadratic form $q$ is called an $m$-fold Pfister form if
$q=\langle 1,a_1\rangle \otimes \cdots \otimes \langle 1,a_m\rangle$,
i.e. a tensor product of binary quadratic forms of the type $\langle 1,a_i\rangle$.
Pfister \cite{Pfi65b} proved the following theorem: 
\begin{thm}[Pfister\ \cite{Pfi65b}]\label{thm:Pfister} 
If $q$ is a Pfister form, then $q$ is strictly multiplicative. 
Conversely, if $q$ is an anisotropic multiplicative form over $K$, 
then $q$ must be a Pfister form. 
\end{thm}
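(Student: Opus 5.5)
The statement is Pfister's classical theorem, and I would prove the two directions separately. The plan uses the Cassels--Pfister representation theorem and the substitution principle for quadratic forms over rational function fields (Pfister \cite[Chapter 2]{Pfi95}, Rajwade \cite[Chapter 12]{Raj93}). Throughout, $D(q)$ denotes the set of nonzero values of $q$. $G(q)=\{c\in K^\times\mid cq\simeq q\}$ denotes the group of similarity factors.

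\emph{Pfister forms are strictly multiplicative.} The key intermediate statement is that every Pfister form $q$ is \emph{round}, i.e. $D(q)=G(q)$. I would prove this by induction on $m$, writing $q=\langle 1,a\rangle\otimes\varphi$ with $\varphi$ an $(m-1)$-fold Pfister form.
\begin{enumerate}
\item A value $c\in D(q)$ has the form $c=u+av$ with $u,v\in D(\varphi)\cup\{0\}$.
\item If $v=0$ or $u=0$, roundness of $\varphi$ (the induction hypothesis) gives $c\in G(q)$ directly.
\item Otherwise, $u,v\in G(\varphi)$ give $q\simeq u\varphi\perp av\varphi\simeq u\langle 1,a v u^{-1}\rangle\otimes\varphi$.
\item The binary identity $\langle 1,b\rangle\simeq (1+b)\langle 1,b\rangle$ then absorbs $c=u(1+avu^{-1})$ into the similarity class.
\end{enumerate}
Since roundness is preserved under field extension, I apply it over $K(\mathbf{x})$ to $c=q(\mathbf{x})\in D(q_{K(\mathbf{x})})$. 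This yields an isometry $T$ over $K(\mathbf{x})$ with $q(T\mathbf{y})=q(\mathbf{x})q(\mathbf{y})$. Putting $\mathbf{z}=T\mathbf{y}$, which is linear in $\mathbf{y}$, gives strict multiplicativity.

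\emph{Anisotropic multiplicative forms are Pfister forms.} Suppose $q$ is anisotropic and $q(\mathbf{x})q(\mathbf{y})=q(\mathbf{z})$ with $z_k\in K(\mathbf{x},\mathbf{y})$. Then $q(\mathbf{x})$ is represented by $q$ over $K(\mathbf{x},\mathbf{y})$ after scaling. From this I would deduce, via the Cassels--Pfister theorem and the substitution principle, that $q(\mathbf{x})\in G(q_{K(\mathbf{x})})$. Specializing $\mathbf{x}$ to points of $K^n$ (allowed because $q$ is anisotropic, so $q(\mathbf{x})$ is a nonzero polynomial) gives $D(q)\subseteq G(q)$. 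Scaling, we may also assume $1\in D(q)$; thus $q$ is round and represents $1$.

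Now take a Pfister subform $\varphi\subseteq q$ of maximal dimension; $\langle 1\rangle$ is one. If $\varphi\neq q$, write $q=\varphi\perp\psi$ and pick $c\in D(\psi)$. I will show $\varphi\perp c\varphi=\langle 1,c\rangle\otimes\varphi$ is a subform of $q$, which contradicts maximality. By roundness, $q\simeq cq\supseteq c\varphi$. The real claim is that $c\varphi$ embeds inside $\psi$, not merely inside $q$. I would prove this using the strict multiplication map $\mathbf{y}\mapsto T_{\mathbf{x}}\mathbf{y}$ from the first part, specialized at a vector representing $c$ in $\psi$. The aim is to show this map sends $\varphi$ into $\varphi^\perp$. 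Failing that, I would use Witt cancellation on the Witt class of $q$ over the function field $K(q)$, where $q$ becomes hyperbolic by multiplicativity.

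This embedding step is the main obstacle: it is where anisotropy and the full multiplicative identity, not just roundness, must be used. Once it is established, induction on $\dim q$ terminates with $q=\varphi$ a Pfister form, since dimensions double at each step.
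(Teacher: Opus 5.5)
The paper only quotes this theorem from Pfister and gives no proof, so your argument has to stand on its own. Your first direction is the standard proof and is correct: roundness of Pfister forms by induction via $\langle 1,b\rangle\simeq(1+b)\langle 1,b\rangle$, then a similarity $T$ of factor $q(\mathbf x)$ over $K(\mathbf x)$. The skeleton of the converse is also the right one: generic roundness from the Cassels--Pfister theorem, a maximal Pfister subform $\varphi$, then doubling.

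\textbf{The gap.} The decisive step, $c\varphi\subseteq\psi$, is never proved, and neither of your two routes closes it.

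Route (a) uses a similarity $cq\simeq q$ at a single $K$-point. That is exactly the information ``$q$ is round over $K$'', and roundness over $K$ cannot suffice. For example, $q=\langle 1,1,1\rangle$ over $\mathbb{R}$ is anisotropic and round, with $D(q)=G(q)=\mathbb{R}_{>0}$. Its maximal Pfister subform is $\varphi=\langle 1,1\rangle$ with $\psi=\langle 1\rangle$, and $c\varphi$ cannot embed in $\psi$. Nothing in your construction singles out a similarity carrying $\varphi$ into $\psi$: for $c=1$, the identity is a similarity of factor $c$ that fixes $\varphi$.

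Route (b) is not an argument. Hyperbolicity of $q$ over $K(q)$ plus Witt cancellation there does not produce a subform relation over $K$. Moreover, the statement that an anisotropic form hyperbolic over its own function field is similar to a Pfister form is itself proved by exactly the step you are missing.

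\textbf{The fix.} The missing idea is to use roundness \emph{generically}, for both $q$ and the Pfister form $\varphi$, and then cancel.
\begin{enumerate}
\item Put $m=\dim\varphi$ and $F=K(t_1,\dots,t_m)$. Anisotropy and the multiplicative identity survive the purely transcendental extension $F/K$. So your Cassels--Pfister argument, run over $F$, gives $D(q_F)\subseteq G(q_F)$; in particular $\varphi(\mathbf t)\in G(q_F)$.
\item Your first part, applied to $\varphi$, gives $\varphi(\mathbf t)\in G(\varphi_F)$.
\item Hence over $F$ we have $\varphi\perp\psi\simeq\varphi(\mathbf t)\varphi\perp\varphi(\mathbf t)\psi\simeq\varphi\perp\varphi(\mathbf t)\psi$. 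Witt cancellation yields $\varphi(\mathbf t)\psi\simeq\psi$.
\item Therefore $\psi_F$ represents $c\,\varphi(\mathbf t)$. Since $\psi$ is anisotropic over $K$, the Cassels--Pfister subform theorem gives $c\varphi\subseteq\psi$.
\item So $\langle 1,c\rangle\otimes\varphi\subseteq q$, contradicting maximality.
\end{enumerate}

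\textbf{Two smaller points.} First, passing from $q(\mathbf x)\in G(q_{F(\mathbf x)})$ to $q(\mathbf a)\in G(q_F)$ needs more than $q(\mathbf a)\neq 0$, because the similarity matrix may have poles at $\mathbf a$. You need a specialization lemma, for example Springer's theorem applied to the hyperbolic form $q\otimes\langle 1,-q(\mathbf x)\rangle$, one variable at a time. Second, no rescaling is needed to get $1\in D(q)$: once $D(q)\subseteq G(q)$, any $a\in D(q)$ gives $1=a^{-1}a\in G(q)D(q)=D(q)$.
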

Let $D_K(n)$ be the set of values in $K^{\times}$ represented by 
a sum of $n$ squares in $K$:  
\[
D_K(n)=\{\alpha\in K^{\times}\ |\ \alpha=\alpha_1^2+\cdots+\alpha_n^2,\ \alpha_j\in K\}.
\]
The \textit{level} (or {\it Stufe}) of a field $K$ is defined as 
$s(K):=\text{Inf}\{n\in\mathbf{N} \mid -1\in D_K(n)\}$. 
By Theorem~\ref{thm:Pfister}, we see that if $n$ is a power of $2$, 
then $D_K(n)$ becomes a multiplicative group. 
Using this, Pfister \cite{Pfi65a} proved the following remarkable theorem 
(see also Pfister \cite[page 28, Historical Note]{Pfi95}): 

\begin{thm}[{Pfister \cite[Satz 4, Satz 5]{Pfi65a}, see also Rajwade \cite[Chapter 2, Theorem~2.2]{Raj93}, Pfister \cite[Chapter 3, Theorem~1.3, Theorem~1.4]{Pfi95}}]
The level $s(K)$ of a field $K$ is, if finite, always a power of $2$. 
Conversely, every power of $2$ is the level $s(K)$ of some field $K$. 
\end{thm}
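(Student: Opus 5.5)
The first assertion is a short argument resting on the multiplicativity of $D_K(2^n)$, which the text above deduces from Theorem~\ref{thm:Pfister}: for every $n\ge 0$, $D_K(2^n)$ is a subgroup of $K^{\times}$. The converse needs an explicit family of fields together with a lower bound on the level, and I expect that lower bound to be the main obstacle.

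For the first assertion, suppose $s=s(K)<\infty$ and choose $n$ with $2^n\le s<2^{n+1}$. Write $-1=a_1^2+\cdots+a_s^2$ with $a_j\in K$, and regroup as
\[
-a=b,\qquad a:=1+a_1^2+\cdots+a_{2^n-1}^2,\qquad b:=a_{2^n}^2+\cdots+a_s^2 .
\]
If $a=0$, then $-1$ would be a sum of $2^n-1<s$ squares, contradicting minimality; so $a\in D_K(2^n)$. Then $b=-a\neq 0$, and $b$ is a sum of $s-2^n+1\le 2^n$ squares, so after padding with zeros $b\in D_K(2^n)$. Since $D_K(2^n)$ is a group, $ab\in D_K(2^n)$, and therefore
\[
-1=\frac{b}{a}=\frac{ab}{a^2}=(ab)\Bigl(\frac{1}{a}\Bigr)^2\in D_K(2^n).
\]
This gives $s\le 2^n$, hence $s=2^n$.

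For the converse, fix $n=2^m$ and take
\[
K=\mathbb{R}(x_1,\dots,x_n)\bigl(\sqrt{-(x_1^2+\cdots+x_n^2)}\bigr).
\]
By construction $-1=\sum_i (x_i/\sqrt{-q})^2$ with $q=\sum x_i^2$, so $s(K)\le n$. For the reverse inequality, suppose $-1=\sum_{j=1}^{n-1}(u_j+v_j\sqrt{-q})^2$ with $u_j,v_j\in\mathbb{R}(x)$. Comparing rational parts gives
\[
1+\sum u_j^2=q\sum v_j^2 .
\]
Since $1+\sum u_j^2\neq 0$ in $\mathbb{R}(x)$ (real field), $w:=\sum v_j^2\neq 0$. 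Dividing by $w$ shows that $q$ is a quotient of an $n$-term sum of squares by an $(n-1)$-term one; since $D(n)$ is a group and $1/w=w\cdot(1/w)^2$, this puts $q\in D_{\mathbb{R}(x)}(n)\cdot D_{\mathbb{R}(x)}(n-1)$. I have not yet worked out how to get from there to the contradiction with the Cassels--Pfister theorem, which states that $x_1^2+\cdots+x_n^2$ is not a sum of $n-1$ squares in $\mathbb{R}(x_1,\dots,x_n)$. The argument would therefore need either a product-of-sums-of-squares or substitution version of Cassels--Pfister, or a cleaner route such as Pfister's original one via degree and specialization.

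The main obstacle is thus the lower bound $s(K)\ge n$. I would take the Cassels--Pfister theorem (a polynomial $p\in \mathbb{R}[x]$ that is a sum of $k$ squares in $\mathbb{R}(x)$ is a sum of $k$ squares in $\mathbb{R}[x]$) from Pfister~\cite[Chapter 3]{Pfi95} or Rajwade~\cite{Raj93}. I would then argue by degree considerations and specialization, for instance setting $x_n=1$ and comparing degrees, rather than reprove it here, in keeping with the citation attached to the statement.
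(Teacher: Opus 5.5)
Your proof of the first assertion is correct. It is the standard regrouping argument the paper alludes to when it notes that $D_K(2^n)$ is a group.

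The converse, however, has a genuine gap, located exactly at the step you leave open. The reduction you do carry out cannot lead anywhere. Since $1\in D(n-1)\subseteq D(n)$ and $D(n)$ is a group, one has $D(n)\cdot D(n-1)=D(n)$. So your conclusion $q\in D(n)\cdot D(n-1)$ says only that $q=x_1^2+\cdots+x_n^2$ is a sum of $n$ squares, which is true by definition, and no form of Cassels--Pfister contradicts it. What you need is $q\in D_F(n-1)$ with $F=\mathbb{R}(x_1,\dots,x_n)$. For this you must keep the $\sqrt{-q}$-component of your equation, namely $\sum_j u_jv_j=0$, which you discarded. You must also use more of Pfister's theory than the group property of $D_F(n)$.

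One way to finish is as follows. Let $\varphi=n\langle 1\rangle$ over $F$, $X=(1,u_1,\dots,u_{n-1})$ and $Y=(0,v_1,\dots,v_{n-1})$. Then $\varphi(X)=qw$ and $\varphi(Y)=w\neq 0$, and the polar form of $\varphi$ vanishes on $(X,Y)$, so $\varphi\simeq w\langle 1,q\rangle\perp\psi$. The Pfister form $\varphi$ is round, i.e.\ $a\varphi\simeq\varphi$ for every $a\in D(\varphi)$; this is the content of strict multiplicativity. Hence $\varphi\simeq\langle 1,q\rangle\perp w\psi$. Witt cancellation of $\langle 1\rangle$ then gives $(n-1)\langle 1\rangle\simeq\langle q\rangle\perp w\psi$, i.e.\ $q\in D_F(n-1)$, contradicting Cassels--Pfister.

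Alternatively, following Pfister's original route, use your first half to pass from $s(K)<n$ to $s(K)\le n/2$. Write $-1=\sum_{i=1}^{n/2}(u_i+v_i\sqrt{-q})^2$. Then apply the refined product formula for the Pfister form $\tfrac n2\langle 1\rangle$ with prescribed first coordinate: $(\sum u_i^2)(\sum v_i^2)=(\sum u_iv_i)^2+z_2^2+\cdots+z_{n/2}^2$. Since $\sum u_iv_i=0$, this yields $qw^2=w+z_2^2+\cdots+z_{n/2}^2$, a sum of $n-1$ squares.

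In either version, the orthogonality relation and roundness (equivalently, the refined product formula) are the ingredients missing from your proposal.
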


For multiplicative quadratic forms, 
see also Pfister \cite{Pfi66}, \cite{Pfi67}, 
Knebusch and Scharlau \cite{KS80}, Scharlau \cite{Sch85}, 
Shapiro \cite{Sha00}, Lam \cite{Lam05}. 

Hoshi \cite{Hos03} defined multiplicative quadratic forms 
on algebraic $K$-varieties as follows:

\begin{defn}[{Hoshi \cite[Section 2]{Hos03}}]
Let $K$ be a field with {\rm char} $K\neq 2$ and 
$V \subset K^n$ be an algebraic $K$-variety. 
We say a regular
quadratic form $q({\bf x})$ is {\it multiplicative on $V$} 
if there exists a bilinear map 
$\varphi :K^n \times K^n \rightarrow K^n$ such that
\[
\varphi(V \times V) \subset V \quad \mbox{and}\quad q({\bf x})q({\bf y})=q(\varphi({\bf x,y}))\mbox{ for any }{\bf x,y} \in V.
\]
\end{defn}

\begin{remark}
Throughout the paper, all varieties and morphisms are defined over a
base field $K$. 
In the definition above, we formulated the notion of a multiplicative form in terms of $K$-rational points $V(K)$, since our main interest lies in arithmetic applications. 
In our concrete situation, however, the operations $\overset{d}{\ast}$ on $V$ arising from Thaine's $d$-composition are given by polynomial expressions with coefficients in $K$. 
In particular, they define morphisms of varieties over $K$ and extend functorially to $F$-rational points for any field extension $F/K$. Thus one can equivalently view a multiplicative quadratic (or $f$-ic) form as a morphism
\[
  q,h \colon V \longrightarrow \mathbb{A}^1
\]
satisfying the corresponding multiplicative identities
\[
  q \circ \overset{d}{\ast} = m \circ (q \times q),
  \qquad
  h \circ \overset{d}{\ast} = m \circ (h \times h)
\]
where \(m:\mathbb A^1\times\mathbb A^1\to\mathbb A^1\) is the usual multiplication morphism. 
On the open subset where $q$ or $h$ is nonzero, this restricts to the multiplication map on ${\mathbb G}_m$.
\end{remark}

Hoshi \cite[Proposition~1]{Hos03} showed that an $m$-fold Pfister form is multiplicative on a variety $V$ defined as the intersection of $2^{m-1}-1$ quadrics (see also \cite[Example 2]{Hos03}). 
Moreover, Hoshi \cite[Theorem~4, Theorem~6]{Hos03} 
constructed {\it non}-Pfister forms of dimension $4$ and $6$ 
which are multiplicative on certain algebraic $K$-varieties. 

The next subsection extends the construction of multiplicative
quadratic forms on algebraic $K$-varieties in Hoshi~\cite{Hos03} to dimension $l-1$, where $l$ is an odd prime, and proves Theorem~\ref{mainth2} via the multiplicative identity of Theorem~\ref{mainth1}.

\subsection{Proof of Theorem~\ref{mainth2}}\label{S4-2}

We now prove Theorem~\ref{mainth2}. 
The key point is that Theorem~\ref{mainth1} yields explicit bilinear formulas compatible with the quadratic relations defining $V$, so that the Jacobi-sum identities translate into closure of $V$ under the corresponding composition law. 
We first isolate the identities needed for the quadratic construction, then prove that the resulting bilinear map preserves the defining equations of $V$, and finally deduce Theorem~\ref{mainth2}.

We begin with an elementary lemma:

\begin{lem} [{see \cite[Theorem~3.9.1]{BEW98}, \cite[Theorem~(i), (ii), page 186]{KR85a}}] \label{lem:BEW}
Let $l$ be an odd prime. 
Let $K$ be a field with char $K\neq 2$, $l$ and 
$\Gal(K(\zeta_l)/K)\simeq (\Z/l\Z)^{\times}$.  
Let ${\bf p}\in K^{\times}$ and
\[
\alpha=\sum_{k=0}^{l-1}a_k\zeta_l^k \quad (a_k \in K)\\
\]
with
\[
\alpha\overline{\alpha}= {\bf p}
\]
where $\overline{\alpha}=\sigma_{-1}(\alpha)$ and $\sigma_{-1} \in \Gal(K(\zeta_l)/K)$ 
satisfies $\sigma_{-1}(\zeta_l)=\zeta_l^{-1}$. 
Then we have
\begin{enumerate}
\item ${\bf p} =\sum_{k=0}^{l-1}a_k^2 - \sum_{k=0}^{l-1}a_ka_{k+1}$,
\item $\sum_{k=0}^{l-1}a_ka_{k+1}=\sum_{k=0}^{l-1}a_ka_{k+m}$ $(2\leq m\leq (l-1)/2)$ 
\end{enumerate}
where each subscript of the $a_k$'s should be taken modulo $l$. 
Moreover, we may assume that $a_0=0$ without loss of generality.
\end{lem}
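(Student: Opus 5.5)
We will expand $\alpha\overline{\alpha}$ directly in the basis $1,\zeta_l,\ldots,\zeta_l^{l-1}$ and then use the linear relation among the powers of $\zeta_l$. Writing $\overline{\alpha}=\sum_{j}a_j\zeta_l^{-j}$, we get
\[
\alpha\overline{\alpha}=\sum_{k=0}^{l-1}\sum_{j=0}^{l-1}a_ka_j\zeta_l^{k-j}=\sum_{m=0}^{l-1}c_m\zeta_l^{m},\qquad c_m:=\sum_{k=0}^{l-1}a_ka_{k+m},
\]
with indices taken modulo $l$. Reindexing $k\mapsto k-m$ shows $c_m=c_{-m}=c_{l-m}$. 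The coefficients $c_1,\dots,c_{l-1}$ are therefore determined by $c_1,\dots,c_{(l-1)/2}$.

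Next we use that $[K(\zeta_l):K]=l-1$ by the Galois hypothesis. Hence $\zeta_l,\ldots,\zeta_l^{l-1}$ is a $K$-basis of $K(\zeta_l)$, and $1=-\sum_{m=1}^{l-1}\zeta_l^m$. Substituting gives
\[
\alpha\overline{\alpha}=\sum_{m=1}^{l-1}(c_m-c_0)\zeta_l^m .
\]
Similarly ${\bf p}=-\sum_{m=1}^{l-1}{\bf p}\,\zeta_l^m$. Comparing coefficients in this basis yields $c_m-c_0=-{\bf p}$ for every $1\le m\le l-1$. Thus all the $c_m$ with $m\neq0$ are equal, which is (2) after using $c_m=c_{l-m}$ to restrict to $1\le m\le (l-1)/2$. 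Moreover ${\bf p}=c_0-c_1=\sum_k a_k^2-\sum_k a_ka_{k+1}$, which is (1).

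For the final assertion we again use $\sum_{k=0}^{l-1}\zeta_l^k=0$. Replacing $a_k$ by $a_k-a_0$ for every $k$ does not change $\alpha$, and the new coefficients have $a_0=0$. Since the identities (1) and (2) were derived from $\alpha$ alone, they hold for the new coefficients as well, so there is no loss of generality.

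The only delicate point is the linear independence used when comparing coefficients. It requires the minimal polynomial of $\zeta_l$ over $K$ to be the full cyclotomic polynomial of degree $l-1$, and this is exactly what the hypothesis $\Gal(K(\zeta_l)/K)\simeq(\Z/l\Z)^\times$ guarantees. The assumption $\mathrm{char}\,K\neq l$ is needed so that $\zeta_l$ is a primitive root of unity and the relation $1+\zeta_l+\cdots+\zeta_l^{l-1}=0$ holds. The rest is bookkeeping with indices modulo $l$.
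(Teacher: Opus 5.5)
Your proposal is correct and follows essentially the same argument as the paper: expand $\alpha\overline{\alpha}$ in powers of $\zeta_l$, eliminate the constant term using $1=-\sum_{m=1}^{l-1}\zeta_l^m$, and compare coefficients in the $K$-basis $\zeta_l,\dots,\zeta_l^{l-1}$. The only differences are cosmetic: the paper normalizes $a_0=0$ first and groups terms as $\zeta_l^j+\zeta_l^{-j}$, whereas you derive the identities for arbitrary coefficients, which has the small advantage of proving (1) and (2) exactly as stated, with $a_0$ possibly nonzero.
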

\begin{proof}
Since char $K\neq l$, we have $1+\zeta_l+\cdots+\zeta_l^{l-1}=0$.
Hence, for any $c\in K$,
\[
\sum_{k=0}^{l-1}(a_k-c)\zeta_l^k
=\sum_{k=0}^{l-1}a_k\zeta_l^k - c(1+\zeta_l+\cdots+\zeta_l^{l-1})
=\alpha.
\]
Taking $c=a_0$, we may replace $(a_k)$ by $(a_k-a_0)$ and thus assume $a_0=0$.

By $\alpha\overline{\alpha}= {\bf p}$, we have 
\[
\left( a_1\zeta_l +  \cdots +a_{l-1}\zeta_l^{l-1}\right) \left(a_1\zeta_l^{l-1} + \cdots + a_{l-1}\zeta_l\right) = {\bf p},
\]
that is,
\[
S_0 + S_1\left(\zeta_l + \zeta_l^{l-1}\right) +  \cdots + S_{(l-1)/2}\left(\zeta_l^{(l-1)/2} + \zeta_l^{l-(l-1)/2}\right)= {\bf p}
\]
where $S_j = \sum_{i-k=j}a_ia_{k}$ ($j=0,\ldots,l-1$).
Note that  $S_j =S_{l-j}$.
By $\zeta_l+\cdots+\zeta_l^{l-1}=-1$, we have
\[
(S_1 - S_0 + {\bf p})\left(\zeta_l + \zeta_l^{l-1}\right) + 
 \cdots + (S_{(l-1)/2} - S_0 + {\bf p})\left(\zeta_l^{(l-1)/2} + \zeta_l^{l-(l-1)/2}\right) = 0.
\]
Because it follows from the assumption 
$\Gal(K(\zeta_l)/K)\simeq (\Z/l\Z)^{\times}$ that 
$\zeta_l,\zeta_l^2, \dots, \zeta_l^{l-1}$ are linearly independent over $K$, 
we have
\[
S_1 - S_0 + {\bf p} = S_m - S_0 + {\bf p}=0\ \ (2\leq m\leq (l-1)/2).
\]
Hence ${\bf p}=S_0-S_1$ and $S_1=S_m$ for $2\le m\le (l-1)/2$, which proves the lemma.
\end{proof}

Theorem~\ref{mainth2} follows from the following theorem: 

\begin{thm}\label{thm:BF}
Let $l$ be an odd prime and $d \in (\Z/l\Z)^{\times}$. 
Let $K$ be a field with char $K\neq 2$, $l$ 
and $\Gal(K(\zeta_l)/K)\simeq (\Z/l\Z)^{\times}$. 
Let
$E_{{\bf p}_1}(a,b)=\sum_{i=0}^{l-1}a_i\zeta_l^i$ and $E_{{\bf p}_2}(a,b)=\sum_{j=0}^{l-1}b_j\zeta_l^j$
be
generalized Jacobi sums for 
${\bf p}_1$, ${\bf p}_2\in K^\times$ respectively. 
For $a,b\in \Z$ with $l\nmid a$, $l\nmid b$, and $l\nmid a+b$,
let $E_{{\bf p}_1{\bf p}_2,d}(a,b)=-\sigma_{-d}(E_{{\bf p}_1}(a,b))E_{{\bf p}_2}(a,b)$
be a generalized Jacobi sum for ${\bf p}_1{\bf p}_2\in K^\times$ 
$($see Theorem~\ref{mainth1}$)$.
Then $E_{{\bf p}_1{\bf p}_2,d}(a,b)=\sum_{k=0}^{l-1}c_k\zeta_l^k$ 
is  given by
$$c_k=-\sum_{i=0}^{l-1}a_{-d^{-1}i}b_{k-i}$$
and the coefficients $c_{0},\dots,c_{l-1}$ satisfy
\begin{enumerate}
\item ${\bf p}_1{\bf p}_2=\sum_{k=0}^{l-1}c_k^2 - \sum_{k=0}^{l-1}c_kc_{k+1}$,
\item $\sum_{k=0}^{l-1}c_kc_{k+1}
=\sum_{k=0}^{l-1}c_kc_{k+m}$\ $(2\leq m\leq (l-1)/2)$
\end{enumerate}
with all indices taken modulo $l$.
Moreover, we may assume $a_0=b_0=c_0=0$ 
without loss of generality and in this case 
$E_{{\bf p}_1{\bf p}_2,d}(a,b)=\sum_{k=1}^{l-1}c_k\zeta_l^k$ 
is  given by 
$$c_k=-\left( \sum_{i=1}^{l-1}a_{-d^{-1}i}b_{k-i} - \sum_{i=1}^{l-1}a_{-d^{-1}i}b_{-i}\right).$$
\end{thm}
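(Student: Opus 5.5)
The plan is to compute the product $-\sigma_{-d}(E_{{\bf p}_1}(a,b))E_{{\bf p}_2}(a,b)$ directly in the basis $1,\zeta_l,\dots,\zeta_l^{l-1}$, and then to obtain (1) and (2) by feeding the resulting element into Lemma~\ref{lem:BEW}. Here $\sigma_{-d}$ is understood as the automorphism $\zeta_l\mapsto\zeta_l^{-d}$ of $K(\zeta_l)$, the setting in which the sums are written as polynomials in $\zeta_l$.

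\emph{The formula for $c_k$.} Applying $\sigma_{-d}$ to $E_{{\bf p}_1}(a,b)=\sum_i a_i\zeta_l^i$ gives $\sum_i a_i\zeta_l^{-di}$. Reindexing by $i'=-di$ (a bijection of $\Z/l\Z$ since $d$ is a unit) rewrites this as $\sum_{i'}a_{-d^{-1}i'}\zeta_l^{i'}$. Multiplying by $\sum_j b_j\zeta_l^j$ and collecting the exponents $i'+j\equiv k \pmod l$ produces the cyclic convolution $c_k=-\sum_{i}a_{-d^{-1}i}b_{k-i}$. This is an identity of representatives in $K[\zeta_l]$, not a claim of uniqueness, so no independence argument is needed at this step.

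\emph{Properties (1) and (2).} By Theorem~\ref{mainth1}(1), $E_{{\bf p}_1{\bf p}_2,d}$ is a generalized Jacobi sum for ${\bf p}_1{\bf p}_2$. Since $l\nmid a$, $l\nmid b$ and $l\nmid a+b$, condition (5) of Definition~\ref{def:T9and10} gives
\[
E_{{\bf p}_1{\bf p}_2,d}(a,b)\,E_{{\bf p}_1{\bf p}_2,d}(-a,-b)={\bf p}_1{\bf p}_2 .
\]
Condition (7) with $c=-1$ identifies $E_{{\bf p}_1{\bf p}_2,d}(-a,-b)$ with $\sigma_{-1}\bigl(E_{{\bf p}_1{\bf p}_2,d}(a,b)\bigr)$. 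Hence $\alpha:=\sum_k c_k\zeta_l^k$ satisfies $\alpha\overline{\alpha}={\bf p}_1{\bf p}_2$. Lemma~\ref{lem:BEW} then yields (1) and (2) verbatim. Its hypotheses on $K$ match those of the present theorem.

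\emph{The normalization $a_0=b_0=c_0=0$.} Lemma~\ref{lem:BEW} also justifies shifting each coefficient vector by a constant, which lets us assume $a_0=b_0=0$. With these, the convolution gives
\[
c_0=-\sum_i a_{-d^{-1}i}b_{-i}.
\]
Subtracting $c_0$ from every $c_k$ leaves $\alpha$ unchanged, because $1+\zeta_l+\cdots+\zeta_l^{l-1}=0$. The result is the displayed formula $c_k=-\bigl(\sum_i a_{-d^{-1}i}b_{k-i}-\sum_i a_{-d^{-1}i}b_{-i}\bigr)$. In these sums the $i=0$ term vanishes because $a_0=0$, so they may be taken over $1\le i\le l-1$.

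\emph{Expected obstacle.} The main point to handle carefully is the identification $E(-a,-b)=\overline{E(a,b)}$. It requires condition (7) for $c=-1$, interpreted in the $\zeta_l$-setting; with this interpretation the step is immediate. The rest is bookkeeping of indices modulo $l$.
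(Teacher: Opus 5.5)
Your proposal is correct and follows essentially the same route as the paper. You expand $-\sigma_{-d}(E_{{\bf p}_1}(a,b))E_{{\bf p}_2}(a,b)$ as a cyclic convolution, get $\alpha\overline{\alpha}={\bf p}_1{\bf p}_2$ from condition (5) of Definition~\ref{def:T9and10} applied to the generalized Jacobi sum supplied by Theorem~\ref{mainth1}(1), invoke Lemma~\ref{lem:BEW}, and remove the constant term using $1+\zeta_l+\cdots+\zeta_l^{l-1}=0$; your explicit appeal to condition (7) with $c=-1$ to justify $E_{{\bf p}_1{\bf p}_2,d}(-a,-b)=\overline{E_{{\bf p}_1{\bf p}_2,d}(a,b)}$ just makes precise a step the paper leaves implicit.
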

\begin{proof}
By Theorem~\ref{mainth1} (1), the element
\[
E_{{\bf p}_1{\bf p}_2,d}(a,b):=-\sigma_{-d}(E_{{\bf p}_1}(a,b))E_{{\bf p}_2}(a,b)
\]
is a generalized Jacobi sum for ${\bf p}_1{\bf p}_2$. 
In particular, since $l\nmid a$, $l\nmid b$, and $l\nmid a+b$, condition (5) in
Definition~\ref{def:T9and10} gives
\[
E_{{\bf p}_1{\bf p}_2,d}(a,b)E_{{\bf p}_1{\bf p}_2,d}(-a,-b) =E_{{\bf p}_1{\bf p}_2,d}(a,b)\overline{E_{{\bf p}_1{\bf p}_2,d}(a,b)}= {\bf p}_1{\bf p}_2=p_1p_2.
\]
Moreover, writing
\[
E_{{\bf p}_1}(a,b)=\sum_{i=0}^{l-1} a_i\zeta_l^i,
\qquad
E_{{\bf p}_2}(a,b)=\sum_{j=0}^{l-1} b_j\zeta_l^j,
\]
we obtain
\[
\sigma_{-d}(E_{{\bf p}_1}(a,b))
=\sigma_{-d}\left(\sum_{i=0}^{l-1}a_i\zeta_l^i\right)
=\sum_{i=0}^{l-1}a_i\zeta_l^{-di}
=\sum_{i=0}^{l-1}a_{-d^{-1}i}\zeta_l^i.
\]
Thus we have
\begin{align*}
E_{{\bf p}_1{\bf p}_2,d}(a,b)
&= -\sigma_{-d}(E_{{\bf p}_1}(a,b))E_{{\bf p}_2}(a,b)\\
&=-\Bigl(\sum_{i=0}^{l-1}a_{-d^{-1}i}\zeta_l^i\Bigr) \Bigl(\sum_{j=0}^{l-1}b_j\zeta_l^j\Bigr)\\
&= -\sum_{k=0}^{l-1}\left(\sum_{i+j\equiv k \,({\rm mod}\,l)}a_{-d^{-1}i}b_j\right)\zeta_l^k \\
&=-\sum_{k=0}^{l-1}\left(\sum_{i=0}^{l-1}a_{-d^{-1}i}b_{k-i}\right)\zeta_l^k\\
&=\sum_{k=0}^{l-1}c_k\zeta_l^k.
\end{align*}

Hence the first assertion follows from Lemma~\ref{lem:BEW}. 
For the reduced form, assume $a_0=b_0=0$. 
Using $1=-(\zeta_l+\cdots+\zeta_l^{l-1})$,
we can eliminate the constant term and rewrite
\begin{align*}
E_{{\bf p}_1{\bf p}_2,d}(a,b)
&= -\sigma_{-d}(E_{{\bf p}_1}(a,b))E_{{\bf p}_2}(a,b)\\
&=-\Bigl(\sum_{i=1}^{l-1}a_{-d^{-1}i}\zeta_l^i\Bigr) \Bigl(\sum_{j=1}^{l-1}b_j\zeta_l^j\Bigr)\\
&= -\sum_{k=0}^{l-1}\left(\sum_{i+j\equiv k \,({\rm mod}\,l)}a_{-d^{-1}i}b_j\right)\zeta_l^k \\
&= -\sum_{k=1}^{l-1}\left(\sum_{i+j\equiv k \,({\rm mod}\,l)}a_{-d^{-1}i}b_j\right)\zeta_l^k 
- \left( \sum_{i+j=l}a_{-d^{-1}i}b_j\right)\\
&=-\sum_{k=1}^{l-1}\left(\sum_{i=1}^{l-1}a_{-d^{-1}i}b_{k-i} - \sum_{i=1}^{l-1}a_{-d^{-1}i}b_{-i}\right)\zeta_l^k\\
&=\sum_{k=1}^{l-1}c_k\zeta_l^k.
\end{align*}
\end{proof}

\begin{remark}\label{rem:base_change}
In the reduced case $a_0=b_0=c_0=0$, the formulas in Theorem~\ref{thm:BF}
are obtained by formal expansion and coefficient comparison, and hence are
polynomial identities in the coefficients $a_i$ and $b_j$.
Therefore they remain valid after any base change and specialization.
\end{remark}

{\it Proof of Theorem~\ref{mainth2}}. 

We first see that the quadratic form
\begin{align*}
q({\bf x})=f_0({\bf x})=\sum_{i=1}^{l-1}x_i^2-\sum_{i=1}^{l-1}x_ix_{i+1}
=(x_1,\ldots,x_{l-1})\,A_{l-1}
\left( 
\begin{array}{c}
x_1\\
\vdots\\
x_{l-1}
\end{array}
\right)
\end{align*}
is regular because it follows from a linear recurrence relation 
det\! $A_{n+2}=$ det\! $A_{n+1}-\frac{1}{4}$ det\! $A_n$ 
with det\! $A_2=\frac{3}{4}$ and det\! $A_1=1$ 
that det\! $A_{l-1}=l/2^{l-1}\neq 0$ 
where $A_{l-1}$ is the corresponding $(l-1)\times (l-1)$ symmetric matrix: 

\renewcommand{\arraystretch}{1.5}
\begin{align*}
A_{l-1}=\left( 
\begin{array}{ccccc}
1 & -\tfrac{1}{2} & 0 & \cdots & 0\\
-\tfrac{1}{2} & 1 & -\frac{1}{2} & \ddots & \vdots\\
0 & -\tfrac{1}{2} & 1 & \ddots & 0\\
\vdots & \ddots & \ddots & \ddots & -\tfrac{1}{2}\\
0 & \cdots & 0 & -\tfrac{1}{2} & 1
\end{array}
\right).
\end{align*}
\renewcommand{\arraystretch}{1.0}

By Remark~\ref{rem:base_change}, in the reduced case $a_0=b_0=c_0=0$
the formulas in Theorem~\ref{thm:BF} remain valid after base change to
$K(x_1,\ldots,x_{l-1},y_1,\ldots,y_{l-1})$ and the specialization
$a_i=x_i$, $b_j=y_j$.
Thus the map $\varphi_d$ defined by
\[
z_k=-\left(\sum_{i=1}^{l-1} x_{-d^{-1}i}y_{k-i}-\sum_{i=1}^{l-1} x_{-d^{-1}i}y_{-i}\right)
\]
satisfies $\varphi_d(V\times V)\subset V$ and $q(\varphi_d(x,y))=q(x)q(y)$.
Hence $q$ is multiplicative on $V$.
\qed

\begin{remark}\label{rem:reg}
We can consider an analog of Theorem~\ref{mainth2} 
when $a_0=x_0\neq0$ and $b_0=y_0\neq 0$ 
as in the first half of Theorem~\ref{thm:BF}. 
However, we see that the corresponding quadratic form
\begin{align*}
q({\bf x})=f_0(x_0,\ldots,x_{l-1})
=\sum_{i=0}^{l-1}x_i^2-\sum_{i=0}^{l-1}x_ix_{i+1}
=(x_0,\ldots,x_{l-1})\,B_l
\left( 
\begin{array}{c}
x_0\\
\vdots\\
x_{l-1}
\end{array}
\right)
\end{align*}
is {\it not} regular, since each row and column sums to zero, and hence det\! $B_l=0$ 
where $B_l$ is the corresponding $l\times l$ symmetric matrix:
\renewcommand{\arraystretch}{1.5}
\begin{align*}
B_l=\left(
\begin{array}{c|ccccc}
1 & -\tfrac{1}{2} & 0 & \cdots & 0 & -\tfrac{1}{2}\\\hline
-\tfrac{1}{2}& 1 & -\tfrac{1}{2} & 0 & \cdots & 0\\
0 & -\tfrac{1}{2} & 1 & -\frac{1}{2} & \ddots & \vdots\\
\vdots & 0 & -\tfrac{1}{2} & 1 & \ddots & 0\\
0 & \vdots & \ddots & \ddots & \ddots & -\tfrac{1}{2}\\
-\tfrac{1}{2} & 0 & \cdots & 0 & -\tfrac{1}{2} & 1
\end{array}
\right).
\end{align*}
\renewcommand{\arraystretch}{1.0}

Thus, to obtain a regular quadratic form, one has to eliminate one variable, for instance by imposing $x_0=0$. 
This is precisely the reduced situation in the second half of Theorem~\ref{thm:BF}, where one also normalizes the coefficients by $a_0=b_0=0$.
\end{remark}

\begin{example}[Examples of Theorem~\ref{mainth2} for $l=3,5,7$]\label{ex:mainth2}
Let $K$ be a field with char $K\neq 2$, $l$ and 
$\Gal(K(\zeta_l)/K)\simeq (\Z/l\Z)^{\times}$. 
Each subscript of the $x_i$'s and the $y_j$'s 
should be taken modulo $l$
and we take $x_0=y_0=0$ 
(for the regularity of $q({\bf x})$, see Remark \ref{rem:reg}).\\ 

(1) $l=3$. It follows from Theorem~\ref{mainth2} that 
the quadratic form $q({\bf x})=q(x_1,x_{2})$ defined by
\[
q({\bf x})=f_0({\bf x})
=\sum_{i=1}^{2}x_i^2-\sum_{i=1}^{2}x_ix_{i+1}
=(x_1^2+x_2^2) - (x_1x_2+x_2x_0)
=x_1^2+x_2^2 - x_1x_2
\]
(with the convention $x_0=0$) is multiplicative on $V=K^2$. 

Indeed, for $d \in (\Z/3\Z)^\times$, 
we obtain an explicit bilinear map $\varphi_d :K^2 \times K^2 \rightarrow K^2$ such that 
\begin{align*}
q({\bf x})q({\bf y})=q(\varphi_d ({\bf x}, {\bf y}))
\end{align*}
for ${\bf x},{\bf y} \in K^2$.
It is given explicitly by
\[
\varphi_d ({\bf x}, {\bf y})
={\bf z}_d
= (z_{1,d}, z_{2,d})
\]
where $z_{k,d}=-\left(\sum_{i=1}^{2}x_{-d^{-1}i}y_{k-i} - \sum_{i=1}^{2}x_{-d^{-1}i}y_{-i}\right)$.

For example, we have for $d=1 \in (\Z/3\Z)^\times$,
\begin{align*}
z_{1,1}
=- x_1y_2 + (x_2y_2 + x_1y_1),\\
z_{2,1}
=-x_2y_1+(x_2y_2 + x_1y_1).
\end{align*}

(2) $l=5$. 
We take an algebraic $K$-variety $V \subsetneq K^4$ defined by
$f_1({\bf x})=0$
where
\begin{align*}
f_1({\bf x})
&=\sum_{i=1}^{4}x_ix_{i+1}-\sum_{i=1}^{4}x_ix_{i+2}\\
&=(x_1x_2 + x_2x_3 + x_3x_4) - (x_1x_3 + x_2x_4 + x_4x_1).
\end{align*}
By Theorem~\ref{mainth2}, the quadratic form 
\begin{align*}
q({\bf x})=f_0({\bf x})
=\sum_{i=1}^{4}x_i^2-\sum_{i=1}^{4}x_ix_{i+1}
=(x_1^2+x_2^2+x_3^2+x_4^2) - (x_1x_2+x_2x_3 + x_3x_4)
\end{align*}
is multiplicative on $V=\{{\bf x}\in K^4\mid f_1({\bf x})=0\}$. 
Indeed, for $d \in (\Z/5\Z)^\times$, we obtain an explicit bilinear map  $\varphi_d :K^4 \times K^4 \rightarrow K^4$ such that 
\begin{align*}
q({\bf x})q({\bf y})=q(\varphi_d ({\bf x}, {\bf y}))
\end{align*}
for ${\bf x},{\bf y} \in V$.
It is given explicitly by
\[
\varphi_d ({\bf x}, {\bf y})
={\bf z}
= (z_{1,d}, z_{2,d}, z_{3,d}, z_{4,d})
\]
where $z_{k,d}=-\left(\sum_{i=1}^{4}x_{-d^{-1}i}y_{k-i} - \sum_{i=1}^{4}x_{-d^{-1}i}y_{-i}\right)$.

For example, we have for $d=1 \in (\Z/5\Z)^\times$,
\begin{align*}
z_{1,1}
&=-(x_3y_4 + x_2y_3 + x_1y_2 )
+(x_4y_4 + x_3y_3 + x_2y_2 + x_1y_1 ),\\
z_{2,1}
&=-(x_{4}y_1 + x_2y_4 + x_1y_3 )
+(x_{4}y_4 + x_3y_3 + x_2y_2 + x_1y_1 ),\\
z_{3,1}
&=-(x_4y_2 + x_3y_1 + x_1y_4 )
+(x_4y_4 + x_3y_3 + x_2y_2 + x_1y_1 ),\\
z_{4,1}
&=-(x_{4}y_3 + x_3y_2 + x_2y_1)
+(x_{4}y_4 + x_3y_3 + x_2y_2 + x_1y_1 ).
\end{align*}

Moreover, for $d=\pm 1\in (\Z/5\Z)^\times$, 
one can check the following identities, which in particular imply that $q({\bf x})$ is multiplicative on $V$, i.e. 
$q({\bf x})q({\bf y})=q(\varphi_d({\bf x}, {\bf y}))$ for 
${\bf x},{\bf y} \in V=\{{\bf x}\in K^4\mid f_1({\bf x})=0\}$:
\begin{align*}
q(\varphi_d({\bf x},{\bf y}))&=
q({\bf x})q({\bf y})+f_1({\bf x})f_1({\bf y}),\\
f_1(\varphi_d({\bf x},{\bf y}))&=
q({\bf x})f_1({\bf y})+f_1({\bf x})q({\bf y})+f_1({\bf x})f_1({\bf y}).
\end{align*}
Note that $q({\bf x})$ and $f_1({\bf x})$ are invariant 
under the change $d=1\mapsto -1$. 
For $d=\pm 2\in (\Z/5\Z)^\times$, we obtain that 
\begin{align*}
q(\varphi_d({\bf x},{\bf y}))&=
q({\bf x})q({\bf y})+f_1({\bf x})q({\bf y})-f_1({\bf x})f_1({\bf y}),\\
f_1(\varphi_d({\bf x},{\bf y}))&=
q({\bf x})f_1({\bf y})-f_1({\bf x})q({\bf y})
\end{align*}
because $q({\bf x})\mapsto q({\bf x})+f_1({\bf x})$, 
$f_1({\bf x})\mapsto -f_1({\bf x})$ after changing $d=1\mapsto \pm 2\in (\Z/5\Z)^\times$.\\

(3) $l=7$. 
We consider an algebraic $K$-variety $V \subsetneq K^6$ defined by
$f_1({\bf x})=0$, $f_2({\bf x})=0$
where
\begin{align*}
f_1({\bf x})
&=\sum_{i=1}^{6}x_ix_{i+1}-\sum_{i=1}^{6}x_ix_{i+2}\\
&=(x_1x_2+x_2x_3 + x_3x_4 + x_4x_5 + x_5x_6)\\
& - (x_1x_3+x_2x_4 + x_3x_5 + x_4x_6 + x_6x_1), \\
f_2({\bf x})
&=\sum_{i=1}^{6}x_ix_{i+2}-\sum_{i=1}^{6}x_ix_{i+3}\\
&= (x_1x_3+x_2x_4 + x_3x_5 + x_4x_6 + x_6x_1)\\
& - (x_1x_4+x_2x_5 + x_3x_6 + x_5x_1 + x_6x_2).
\end{align*}
By Theorem~\ref{mainth2}, the quadratic form 
\begin{align*}
q({\bf x})=f_0({\bf x})
=\sum_{i=1}^{6}x_i^2-\sum_{i=1}^{6}x_ix_{i+1}
&=(x_1^2+x_2^2+x_3^2+x_4^2+x_5^2+x_6^2)\\ 
&- (x_1x_2+x_2x_3 + x_3x_4 + x_4x_5 + x_5x_6)
\end{align*}
is multiplicative on $V=\{{\bf x}\in K^6\mid f_1({\bf x})=f_2({\bf x})=0\}$. 
Indeed, for $d \in (\Z/7\Z)^\times$, we obtain an explicit bilinear map  $\varphi_d :K^6 \times K^6 \rightarrow K^6$ such that 
\begin{align*}
q({\bf x})q({\bf y})=q(\varphi_d ({\bf x}, {\bf y}))
\end{align*}
for ${\bf x},{\bf y} \in V$.
It is given explicitly by
\[
\varphi_d ({\bf x}, {\bf y})
= (z_{1,d}, z_{2,d}, z_{3,d}, z_{4,d},z_{5,d},z_{6,d})
\]
where $z_{k,d}=-\left(\sum_{i=1}^{6}x_{-d^{-1}i}y_{k-i} - \sum_{i=1}^{6}x_{-d^{-1}i}y_{-i}\right)$.
For example, we have for $d=1 \in (\Z/7\Z)^\times$,
\begin{align*}
z_{1,1}
&=-( x_5y_6 + x_4y_5 + x_3y_4 + x_2y_3 + x_1y_2 )
+(x_6y_6 + x_5y_5 + x_4y_4 + x_3y_3 + x_2y_2 + x_1y_1 ),\\
z_{2,1}
&=-(x_6y_1 + x_4y_6 + x_3y_5 + x_2y_4 + x_1y_3 )
+(x_6y_6 + x_5y_5 + x_4y_4 + x_3y_3 + x_2y_2 + x_1y_1 ),\\
z_{3,1}
&=-(x_6y_2 + x_5y_1 + x_3y_6 + x_2y_5 + x_1y_4 )
+(x_6y_6 + x_5y_5 + x_4y_4 + x_3y_3 + x_2y_2 + x_1y_1 ),\\
z_{4,1}
&=-(x_6y_3 + x_5y_2 + x_4y_1 + x_2y_6 + x_1y_5 )
+(x_6y_6 + x_5y_5 + x_4y_4 + x_3y_3 + x_2y_2 + x_1y_1 ),\\
z_{5,1}
&=-( x_6y_4 + x_5y_3 + x_4y_2 + x_3y_1 + x_1y_6 )
+(x_6y_6 + x_5y_5 + x_4y_4 + x_3y_3 + x_2y_2 + x_1y_1 ),\\
z_{6,1}
&=-(x_6y_5 + x_5y_4 + x_4y_3 + x_3y_2 + x_2y_1)
+(x_6y_6 + x_5y_5 + x_4y_4 + x_3y_3 + x_2y_2 + x_1y_1 ). 
\end{align*}

Moreover, for $d=\pm 1\in (\Z/7\Z)^\times$, 
one can check the following identities, which in particular imply that $q({\bf x})$ is multiplicative on $V$, i.e. 
$q({\bf x})q({\bf y})=q(\varphi_d({\bf x}, {\bf y}))$ for 
${\bf x},{\bf y} \in V=\{{\bf x}\in K^6\mid f_1({\bf x})=f_2({\bf x})=0\}$:
\begin{align*}
q(\varphi_d({\bf x},{\bf y}))&=
q({\bf x})q({\bf y})+f_1({\bf x})f_1({\bf y})+f_2({\bf x})f_2({\bf y}),\\
f_1(\varphi_d({\bf x},{\bf y}))&=
q({\bf x})f_1({\bf y})+f_1({\bf x})q({\bf y})+f_1({\bf x})f_1({\bf y})+f_1({\bf x})f_2({\bf y})+f_2({\bf x})f_1({\bf y})+f_2({\bf x})f_2({\bf y}),\\
f_2(\varphi_d({\bf x},{\bf y}))&=
q({\bf x})f_2({\bf y})+f_2({\bf x})q({\bf y})+f_1({\bf x})f_1({\bf y})+f_1({\bf x})f_2({\bf y})+f_2({\bf x})f_1({\bf y}).
\end{align*}
Note that $q({\bf x})$, $f_1({\bf x})$, $f_2({\bf x})$ 
are invariant under the change $d=1\mapsto -1$. 
For $d=\pm 3\in (\Z/7\Z)^\times$, we obtain that
\begin{align*}
q(\varphi_d({\bf x},{\bf y}))&=
q({\bf x})q({\bf y})+f_1({\bf x})q({\bf y})-f_1({\bf x})f_2({\bf y})+f_2({\bf x})f_1({\bf y})-f_2({\bf x})f_2({\bf y}),\\
f_1(\varphi_d({\bf x},{\bf y}))&=
q({\bf x})f_1({\bf y})+f_2({\bf x})q({\bf y})-f_1({\bf x})f_2({\bf y}),\\
f_2(\varphi_d({\bf x},{\bf y}))&=
q({\bf x})f_2({\bf y})-f_1({\bf x})q({\bf y})-f_2({\bf x})q({\bf y})-f_1({\bf x})f_1({\bf y})+f_1({\bf x})f_2({\bf y})+f_2({\bf x})f_2({\bf y})
\end{align*}
because we get 
$\overline{\sigma}: q({\bf x})\mapsto q({\bf x})+f_1({\bf x})$, 
$f_1({\bf x})\mapsto f_2({\bf x})$, $f_2({\bf x})\mapsto -f_1({\bf x})-f_2({\bf x})$ by changing $d=1\mapsto \pm 3\in (\Z/7\Z)^\times$. 
Applying this twice, we get for $d=\pm 2$ $(=\pm 3^2)\in (\Z/7\Z)^\times$,
\begin{align*}
q(\varphi_d({\bf x},{\bf y}))&=
q({\bf x})q({\bf y})+f_1({\bf x})q({\bf y})+f_2({\bf x})q({\bf y})-f_1({\bf x})f_1({\bf y})+f_1({\bf x})f_2({\bf y})-f_2({\bf x})f_1({\bf y}),\\
f_1(\varphi_d({\bf x},{\bf y}))&=
q({\bf x})f_1({\bf y})-f_1({\bf x})q({\bf y})-f_2({\bf x})q({\bf y})+f_1({\bf x})f_1({\bf y})-f_2({\bf x})f_2({\bf y}),\\
f_2(\varphi_d({\bf x},{\bf y}))&=
q({\bf x})f_2({\bf y})+f_1({\bf x})q({\bf y})-f_2({\bf x})f_1({\bf y}) 
\end{align*}
where $\overline{\sigma}^{\,2}: 
q({\bf x})\mapsto q({\bf x})+f_1({\bf x})+f_2({\bf x})$, 
$f_1({\bf x})\mapsto -f_1({\bf x})-f_2({\bf x})$, 
$f_2({\bf x})\mapsto f_1({\bf x})$ by changing $d=1\mapsto \pm 2$ 
$(=\pm 3^2)\in (\Z/7\Z)^\times$. 
Note that $(\Z/7\Z)^\times/\{\pm 1\}$ is a cyclic group of order $3$ 
and $\overline{\sigma}^{\,3}={\rm id}$. 
\end{example}

\subsection{Explicit lifts of solutions for Diophantine equations}\label{S4-3}

We apply Theorem~\ref{thm:BF} to systems of Diophantine equations associated with Jacobi sums and cyclotomic numbers. 
This recovers explicit lifting formulas for the prime-power systems studied in Hoshi and Kanai~\cite{HK22}, and also explains the corresponding multiplicative quadratic forms on algebraic $K$-varieties considered in Hoshi~\cite{Hos03}.

With the notation $W_p$ introduced before Corollary~\ref{cor:lifting}, 
classical Jacobi sums in the prime-power case furnish integral points on the fibers $W_{p^r}$. 
The composition law then yields integral points on $W_{p^r q^s}$ from those on $W_{p^r}$ and $W_{q^s}$.

(1) $l=3$ (cf. Gauss \cite[Section 358]{Gau1801}, Katre and Rajwade \cite[Proposition~1]{KR85a}, \cite[Section 3.1, Section 10.10]{BEW98}). 
Let $p^r, q^s$ be prime powers with $p^r\equiv 1\ ({\rm mod}\ 3)$, $q^s\equiv 1\ ({\rm mod}\ 3)$. 
Note that we can consider both of the cases $p\neq q$ 
and $p=q$ (see the end of this section for the case where $p=q$).  
Let
$J^\ast_{p^r}(1,1)=\sum_{i=1}^{2}a_{i}\zeta_3^{i}$ $($resp. $J^\ast_{q^s}(1,1)=\sum_{j=1}^{2}b_{j}\zeta_3^{j}$$)$ 
be Jacobi sums for $\F_{p^r}$ $($resp. $\F_{q^s}$$)$. 
We see that $J^\ast_{p^r}(1,1)$ and $J^\ast_{q^s}(1,1)$ are given by 
\begin{align*}
&J^\ast_{p^r}(1,1)
=\frac{1}{2}(x_1+3x_2\sqrt{-3})
=\frac{-x_1+3x_2}{2}\zeta_3 + \frac{-x_1-3x_2}{2}\zeta_3^2,\\
&J^\ast_{q^s}(1,1)
=\frac{1}{2}(y_1+3y_2\sqrt{-3})
=\frac{-y_1+3y_2}{2}\zeta_3 + \frac{-y_1-3y_2}{2}\zeta_3^2
\end{align*}
where $x_1,x_2,y_1,y_2 \in\Z$ are given as integer solutions of the systems of Diophantine equations
\begin{align}\label{eq:DS3p}
\begin{cases}
4p^r=x_1^2+27x_2^2,\\
x_1\equiv 1\ ({\rm mod}\ 3),\ p\notmid x_1,
\end{cases}
\end{align}
\begin{align*} 
\begin{cases}
4q^s=y_1^2+27y_2^2,\\
y_1\equiv 1\ ({\rm mod}\ 3),\ q\notmid y_1. 
\end{cases}
\end{align*}
Each system has two solutions $(x_1,\pm x_2)$ and $(y_1,\pm y_2)$ 
which depend on a choice of generator of 
$\F_{p^r}$ and $\F_{q^s}$ respectively.

By Theorem~\ref{mainth1} and Theorem~\ref{thm:BF},
for $d\in(\Z/3\Z)^{\times}$, 
generalized Jacobi sums
\[
J^\ast_{p^rq^s,d}(1,1)=-\sigma_{-d}(J^\ast_{p^r}(1,1))J^\ast_{q^s}(1,1)=\sum_{k=1}^{2}c_k\zeta_3^k
\]
for $p^rq^s$ are given by
$c_k=-(\sum_{i=1}^{2}a_{-d^{-1}i}b_{k-i} - \sum_{i=1}^{2}a_{-d^{-1}i}b_{-i})$ 
where $J^\ast_{p^r}(1,1)=\sum_{i=1}^2 a_i\zeta_3^i$, 
$J^\ast_{q^s}(1,1)=\sum_{j=1}^2 b_j\zeta_3^j$. 

By Theorem~\ref{thm:BF} again, we have
\begin{align}\label{eq:DS3c}
p^rq^s=c_1^2+c_2^2-c_1c_2.
\end{align}
We define
\begin{align*}
z_{1,d}&:=c_1+c_2=-\frac{1}{2}\sigma^i(x_1y_1+27x_2y_2),\\
z_{2,d}&:=\frac{1}{3}\left(c_1-c_2\right)=-\frac{1}{2}\sigma^i(x_1y_2-y_1 x_2)
\end{align*}
where $\sigma(x_1,x_2,y_1,y_2)=(x_1,-x_2,y_1,y_2)$ and $i=0,1$ if $d=1,2$. 
By the
equation (\ref{eq:DS3c}), we can verify $z_{1,d}, z_{2,d}$ satisfy
\begin{align*}
z_{1,d}^2+27z_{2,d}^2
=4(c_1^2+c_2^2-c_1c_2)
=4p^rq^s.
\end{align*} 
Thus $(z_{1,d},z_{2,d})$ yields solutions of the system of Diophantine equations for $p^rq^s$:
\begin{align*}
\begin{cases}
4p^rq^s=z_{1,d}^2+27z_{2,d}^2,\\
z_{1,d}\equiv 1\ ({\rm mod}\ 3).
\end{cases}
\end{align*}

From the argument above, if we take
${\bf z}^\prime=(z_{1,d}^{\prime}, z_{2,d}^{\prime}) = (2z_{1,d}, 2z_{2,d})$, 
then we get a quadratic form $q({\bf x})=x_1^2+27x_2^2$ which 
is multiplicative on $V=K^2$ with 
$\varphi_d ({\bf x}, {\bf y})={\bf z}^\prime=(z_{1,d}^{\prime}, z_{2,d}^{\prime})$. 
Indeed, we have
\begin{align*}
q(\varphi_d ({\bf x}, {\bf y}))=q({\bf z}^\prime)
=(z_{1,d}^{\prime})^2+27(z_{2,d}^{\prime})^2
=4(z_{1,d}^2+27z_{2,d}^2)
=4(4p^rq^s)
=(4p^r)(4q^s)
=q({\bf x})q({\bf y}).
\end{align*}
Thus we recover the well-known formula:
\begin{align*}
(x_1^2+27x_2^2)(y_1^2+27y_2^2)=(x_1y_1\pm 27 x_2y_2)^2+27(x_1y_2\mp y_1x_2)^2. 
\end{align*}

In the case
$p=q$ (i.e. $x_i=y_i$), 
a lift of Jacobi sums $J^\ast_{p^r}(1,1)$ to $J^\ast_{p^{nr}}(1,1)$ 
can be obtained by Davenport and Hasse's lifting theorem 
(see Theorem~\ref{thm:corMT1} and Section \ref{S2}). 
Hoshi and Kanai \cite[Section 7, (1) $e=3$]{HK22} 
gave such an explicit lift of solutions of the system of Diophantine equations (\ref{eq:DS3p}) corresponding to Davenport and Hasse's lifting theorem, cf. Theorem~\ref{thm:corMT1} with 
$d=2=-1 \in(\Z/3\Z)^{\times}$. 
Indeed, in such a case, 
\begin{align*}
z_{1,-1}&=-\frac{1}{2}(x_1^2-27x_2^2),\\
z_{2,-1}&=-x_1x_2
\end{align*}
are given as $c^{(2)}$, $d^{(2)}$ in Hoshi and Kanai \cite[page 19, line 8]{HK22}.\\

(2) $l=5$ (cf. Lehmer \cite[Equation (10)]{Leh51}, Berndt and Evans \cite[Section 5]{BE81},  Katre and Rajwade \cite{KR85b}, \cite[Section 3.7]{BEW98}, Hoshi \cite[Section 5]{Hos03}, \cite[Section 3]{Hos06}).
Let $p^r, q^s$ be prime powers with $p^r\equiv 1\ ({\rm mod}\ 5)$, $q^s\equiv 1\ ({\rm mod}\ 5)$. 
Note that we can consider both of the cases $p\neq q$ 
and $p=q$ (see the end of this section for the case where $p=q$). 
Let 
$J^\ast_{p^r}(1,1)=\sum_{i=1}^{4}a_{i}\zeta_5^{i}$ $($resp. $J^\ast_{q^s}(1,1)=\sum_{j=1}^{4}b_{j}\zeta_5^{j}$$)$ 
be 
Jacobi sums for $\F_{p^r}$ $($resp. $\F_{q^s}$$)$. 
We see that $J^\ast_{p^r}(1,1)$ and $J^\ast_{q^s}(1,1)$ are given by 
\begin{align*}
J^\ast_{p^r}(1,1)
=\frac{1}{4}\left(T\zeta_5+\sigma^3(T)\zeta_5^2+\sigma(T)\zeta_5^3+\sigma^2(T)\zeta_5^4\right),
\end{align*}
\begin{align*}
J^\ast_{q^s}(1,1)
=\frac{1}{4}\left(U\zeta_5+\tau^3(U)\zeta_5^2+\tau(U)\zeta_5^3+\tau^2(U)\zeta_5^4\right)
\end{align*}
where $T=-x_1+5x_2+4x_3+2x_4$, $U=-y_1+5y_2+4y_3+2y_4$, $\sigma(x_1,x_2,x_3,x_4)=(x_1,-x_2,-x_4,x_3)$, $\tau(y_1,y_2,y_3,y_4)=(y_1,-y_2,-y_4,y_3)$ and
$x_1,x_2,x_3,x_4, y_1,y_2,y_3,y_4\in\Z$ are obtained as 
integer solutions of the systems of Diophantine equations 
\begin{align}\label{eq:DS5p}
\begin{cases}
16p^r=x_1^2+125x_2^2+50x_3^2+50x_4^2,\\
x_1x_2=x_3^2-4x_3x_4-x_4^2, \\
x_1 \equiv 1\ ({\rm mod}\ 5),\ p\notmid x_1^2-125x_2^2,
\end{cases} 
\end{align}
\begin{align*} 
\begin{cases}
16q^s=y_1^2+125y_2^2+50y_3^2+50y_4^2,\\
y_1y_2=y_3^2-4y_3y_4-y_4^2, \\
y_1 \equiv 1\ ({\rm mod}\ 5),\ q\notmid y_1^2-125y_2^2.
\end{cases} 
\end{align*}
Each system has four solutions $\sigma^i(x_1,x_2,x_3,x_4)$ and $\tau^i(y_1,y_2,y_3,y_4)$ $(i=0,1,2,3)$ 
which depend on a choice of generator of $\F_{p^r}$ and $\F_{q^s}$ 
respectively.

By Theorem~\ref{mainth1} and Theorem~\ref{thm:BF},
for $d\in(\Z/5\Z)^{\times}$, generalized Jacobi sums
\[
J^\ast_{p^rq^s,d}(1,1)=
-\sigma_{-d}(J^\ast_{p^r}(1,1))J^\ast_{q^s}(1,1)=\sum_{k=1}^{4}c_k\zeta_5^k
\]
for $p^rq^s$ are given by
$c_k=-(\sum_{i=1}^{l-1}a_{-d^{-1}i}b_{k-i} - \sum_{i=1}^{l-1}a_{-d^{-1}i}b_{-i})$, 
$a_1=T, a_2=\sigma^3(T), a_3=\sigma(T), a_4=\sigma^2(T), b_1=U, b_2=\tau^3(U), b_3=\tau(U), b_4=\tau^2(U)$.
The $c_k$'s satisfy
\begin{align}\label{eq:DS5c}
\begin{dcases}
p^rq^s
=c_1^2+c_2^2+c_3^2+c_4^2-c_1c_2-c_2c_3-c_3c_4, \\
c_1c_2+c_2c_3+c_3c_4 - (c_1c_3+c_2c_4+c_4c_1)=0.
\end{dcases}
\end{align}

We define 
\begin{align*} 
\begin{split}
z_{1,d}&:=c_1+c_2+c_3+c_4=-\frac{1}{4}\sigma^i\left(x_1y_1 + 125x_2y_2 + 50x_3y_3 + 50x_4y_4\right),\\
z_{2,d}&:=-\frac{1}{5}\left(c_1-c_2-c_3+c_4\right)=-\frac{1}{4}\sigma^i\left(x_1y_2 + x_2y_1 - 2x_3y_3 + 4x_3y_4 + 4x_4y_3  + 2x_4y_4\right),\\
z_{3,d}&:=-\frac{1}{5}\left(2c_1-c_2+c_3-2c_4\right)=-\frac{1}{4}\sigma^i\left( x_1y_3 - 5x_2y_3 + 10x_2y_4 - x_3y_1 + 5x_3y_2 - 10x_4y_2 \right),\\
z_{4,d}&:=-\frac{1}{5}\left(c_1+2c_2-2c_3-c_4\right)=-\frac{1}{4}\sigma^i\left(x_1y_4 + 10x_2y_3 + 5x_2y_4 - 10x_3y_2 - x_4y_1 - 5x_4y_2  \right)
\end{split}
\end{align*}
where $\sigma(x_1,x_2,x_3,x_4,y_1,y_2,y_3,y_4)=(x_1,-x_2,-x_4,x_3,y_1,y_2,y_3,y_4)$ and $i=0$ 
(resp. $1,3,2$) if $d=1$ (resp. $2,3,4$). 
By the equation (\ref{eq:DS5c}), 
we can verify that $z_{1,d},z_{2,d},z_{3,d}, z_{4,d}$ satisfy
\begin{align*}
z_{1,d}^2+125z_{2,d}^2+50z_{3,d}^2+50z_{4,d}^2
&=16(c_1^2 + c_2^2 + c_3^2 + c_4^2) - 8( (c_1c_2  +  c_2c_3 + c_3c_4) +( c_1c_3 + c_2c_4 + c_4c_1))\\
&=16(c_1^2+c_2^2+c_3^2+c_4^2-(c_1c_2+c_2c_3+c_3c_4))\\
&=16p^rq^s
\end{align*}
and
\begin{align*}
z_{1,d}z_{2,d}-z_{3,d}^2+4z_{3,d}z_{4,d}+z_{4,d}^2=\frac{5}{4}\left(c_1c_2 + c_2c_3 + c_3c_4 - (c_1c_3 + c_2c_4 + c_4c_1)\right)=0.
\end{align*}
Thus $(z_{1,d},z_{2,d},z_{3,d},z_{4,d})$
yields solutions of the system of Diophantine equations 
for $p^rq^s$:
\begin{align*}
\begin{cases}
16p^rq^s=z_{1,d}^2+125z_{2,d}^2+50z_{3,d}^2+50z_{4,d}^2,\\
z_{1,d}z_{2,d}=z_{3,d}^2-4z_{3,d}z_{4,d}-z_{4,d}^2, \\
z_{1,d} \equiv 1\ ({\rm mod}\ 5). 
\end{cases}
\end{align*}

For 
${\bf z}^{\prime} = (z_{1,d}^{\prime},z_{2,d}^{\prime},z_{3,d}^{\prime},z_{4,d}^{\prime})=(4z_{1,d},4z_{2,d},4z_{3,d},4z_{4,d})$, 
the quadratic form $q({\bf x})=x_1^2+125x_2^2+50x_3^2+50x_4^2$ is 
multiplicative on $V$ defined by $x_1x_2=x_3^2-4x_3x_4-x_4^2$
with 
$\varphi_d ({\bf x}, {\bf y})=(z_{1,d}^{\prime}, z_{2,d}^{\prime}, z_{3,d}^{\prime}, z_{4,d}^{\prime})$. 
Indeed, we have
\begin{align*}
q(\varphi_d ({\bf x}, {\bf y}))=q({\bf z}^{\prime})
&={(z_{1,d}^{\prime})}^2+125{(z_{2,d}^{\prime})}^2+50{(z_{3,d}^{\prime})}^2+50{(z_{4,d}^{\prime})}^2\\
&=16(z_{1,d}^2+125z_{2,d}^2+50z_{3,d}^2+50z_{4,d}^2)\\
&=16(16p^rq^s)
=(16p^r)(16q^s)\\
&=(x_1^2+125x_2^2+50x_3^2+50x_4^2)(y_1^2+125y_2^2+50y_3^2+50y_4^2)\\
&=q({\bf x})q({\bf y})
\end{align*}
and
\[
z_{1,d}^{\prime}z_{2,d}^{\prime}={(z_{3,d}^{\prime})}^2-4z_{3,d}^{\prime}z_{4,d}^{\prime}-{(z_{4,d}^{\prime})}^2.
\]
If we take $d=1$ and define $Z_{i}=-z^{\prime}_{i,1}$, 
then we recover the bilinear form $\varphi_1 ({\bf x}, {\bf y})=(Z_1,Z_2,Z_3,Z_4)$  
given in Hoshi \cite[Section 3, Equation (6)]{Hos03}:
\begin{align*}
Z_1&=x_1y_1 + 125x_2y_2 + 50x_3y_3 + 50x_4y_4,\\
Z_2&=x_1y_2 + x_2y_1 - 2x_3y_3 + 4x_3y_4 + 4x_4y_3  + 2x_4y_4,\\
Z_3&=x_1y_3 - 5x_2y_3 + 10x_2y_4 - x_3y_1 + 5x_3y_2 - 10x_4y_2,\\
Z_4&=x_1y_4 + 10x_2y_3 + 5x_2y_4 - 10x_3y_2 - x_4y_1 - 5x_4y_2.
\end{align*}

In the case
$p=q$ (i.e. $x_i=y_i$), 
a lift of Jacobi sums $J^\ast_{p^r}(1,1)$
to $J^\ast_{p^{nr}}(1,1)$
can be obtained by Davenport and Hasse's lifting theorem 
(see Theorem~\ref{thm:corMT1} and Section \ref{S2}).  
Hoshi and Kanai \cite[Section 7, (2) $e=5$]{HK22} 
gave such an explicit lift of solutions of the system of Diophantine equations (\ref{eq:DS5p}) corresponding to Davenport and Hasse's lifting theorem, cf. Theorem~\ref{thm:corMT1} with 
$d=4=-1 \in(\Z/5\Z)^{\times}$. 
Indeed,
\begin{align*}
z_{1,-1}&=-\frac{1}{4}(x_1^2+125x_2^2-50x_3^2-50x_4^2),\\
z_{2,-1}&=-\frac{1}{2}(x_1x_2+x_3^2-4x_3x_4-x_4^2),\\
z_{3,-1}&=-\frac{1}{2}(x_1x_3+10x_2x_4-5x_3x_2),\\
z_{4,-1}&=-\frac{1}{2}(x_1x_4+10x_2x_3+5x_4x_2)
\end{align*}
are given as
$x^{(2)}$, $w^{(2)}$, $v^{(2)}$, $u^{(2)}$ 
in Hoshi and Kanai \cite[page 21, lines 10--13]{HK22}.\\

(3) $l=7$ (cf. Leonard and Williams \cite{LW75a}, \cite[Section 3.9]{BEW98}).
Let $p^r, q^s$ be prime powers with $p^r\equiv 1\ ({\rm mod}\ 7)$, $q^s\equiv 1\ ({\rm mod}\ 7)$. 
Note that we can consider both of the cases $p\neq q$ 
and $p=q$ (see the end of this section for the case where $p=q$). 
Let
$J^\ast_{p^r}(1,1)=\sum_{k=1}^{6}a_k\zeta_7^k$ $($resp. $J^\ast_{q^s}(1,1)=\sum_{k=1}^{6}b_k\zeta_7^k$$)$ 
be
Jacobi sums for $\F_{p^r}$ $($resp. $\F_{q^s}$$)$. 
We see that 
$J^\ast_{p^r}(1,1)$ and
$J^\ast_{q^s}(1,1)$
are given by 
\begin{align*}
J^\ast_{p^r}(1,1)
&=\frac{1}{12}\left(T\zeta_7+\sigma^4(T)\zeta_7^2+\sigma^5(T)\zeta_7^3
+\sigma^2(T)\zeta_7^4+\sigma(T)\zeta_7^5+\sigma^3(T)\zeta_7^6\right),\\
J^\ast_{q^s}(1,1) &=\frac{1}{12}\left(U\zeta_7+\tau^4(U)\zeta_7^2+\tau^5(U)\zeta_7^3
+\tau^2(U)\zeta_7^4+\tau(U)\zeta_7^5+\tau^3(U)\zeta_7^6\right)
\end{align*}
where $T=-2x_1+6x_2+7x_5+21x_6$, $U=-2y_1+6y_2+7y_5+21y_6$, $\sigma(x_1,x_2,x_3,x_4,x_5,x_6)=(x_1,-x_3,x_4,x_2,(-x_5-3x_6)/2,(x_5-x_6)/2)$, $\tau(y_1,y_2,y_3,y_4,y_5,y_6)=(y_1,-y_3,y_4,y_2,(-y_5-3y_6)/2,(y_5-y_6)/2)$  and
$x_1,\dots,x_6, y_1,\dots,y_6\in\Z$ are obtained as 
the integer solutions of the systems of Diophantine equations 
\begin{align}\label{eq:DS7p}
\begin{cases}
72p^r=2x_1^2+42x_2^2+42x_3^2+42x_4^2+343x_5^2+1029x_6^2, \\
12x_2^2-12x_4^2+147x_5^2-441x_6^2+56x_1x_6+24x_2x_3
-24x_2x_4+48x_3x_4+98x_5x_6=0, \\
12x_3^2-12x_4^2+49x_5^2-147x_6^2+28x_1x_5+28x_1x_6+48x_2x_3+24x_2x_4+24x_3x_4+490x_5x_6=0,\\
x_1\equiv 1\ ({\rm mod}\ 7),\ (x_5,x_6)\neq (0,0),\\
\end{cases}
\end{align}
\begin{align*} 
\begin{cases}
72q^s=2y_1^2+42y_2^2+42y_3^2+42y_4^2+343y_5^2+1029y_6^2, \\
12y_2^2-12y_4^2+147y_5^2-441y_6^2+56y_1y_6+24y_2y_3
-24y_2y_4+48y_3y_4+98y_5y_6=0, \\
12y_3^2-12y_4^2+49y_5^2-147y_6^2+28y_1y_5+28y_1y_6+48y_2y_3+24y_2y_4+24y_3y_4+490y_5y_6=0,\\
y_1\equiv 1\ ({\rm mod}\ 7),\ (y_5,y_6)\neq (0,0),\\
\end{cases}
\end{align*}
i.e. 
$a_1=T, a_2=\sigma^4(T), a_3=\sigma^5(T), a_4=\sigma^2(T),a_5=\sigma(T), a_6=\sigma^3(T),
b_1=U, b_2=\tau^4(U), b_3=\tau^5(U), b_4=\tau^2(U),b_5=\tau(U), b_6=\tau^3(U)$. 
Each system has six solutions \\$\sigma^i(x_1,x_2,x_3,x_4,x_5,x_6)$ and $\tau^i(y_1,y_2,y_3,y_4,y_5,y_6)$ $(i=0,\dots,5)$ 
which depend on the choice of generator of $\F_{p^r}$ and $\F_{q^s}$ respectively. 

By Theorem~\ref{mainth1} and Theorem~\ref{thm:BF},
for $d\in(\Z/7\Z)^{\times}$, 
generalized Jacobi sums
\[
J^\ast_{p^rq^s,d}(1,1)
=-\sigma_{-d}(J^\ast_{p^r}(1,1))J^\ast_{q^s}(1,1)=\sum_{k=1}^{7}c_k\zeta_7^k
\]
for $p^rq^s$ are given by 
$c_k=-(\sum_{i=1}^{6}a_{-d^{-1}i}b_{k-i} - \sum_{i=1}^{6}a_{-d^{-1}i}b_{-i})$
and the $c_k$'s satisfy
\begin{align}\label{eq:DS7c}
\begin{dcases}
p^rq^s=
c_1^2+c_2^2+c_3^2+c_4^2+c_5^2+c_6^2- c_1c_2-c_2c_3-c_3c_4-c_4c_5-c_5c_6,\\
c_1c_2+c_2c_3+c_3c_4+c_4c_5+c_5c_6-(c_1c_3+c_2c_4+c_3c_5+c_4c_6+c_6c_1)=0,\\
c_1c_2+c_2c_3+c_3c_4+c_4c_5+c_5c_6-(c_1c_4+c_2c_5+c_3c_6+c_5c_1+c_6c_2)=0.
\end{dcases}
\end{align}
We define
\begin{align}\label{eq:BF7}
\begin{split}
z_{1,d}&:=c_1+c_2+c_3+c_4+c_5+c_6\\
&=-\frac{1}{12}\sigma^i(2 x_1 y_1 + 42 x_2 y_2 + 42 x_3 y_3 + 42 x_4 y_4 + 343 x_5 y_5 + 1029 x_6 y_6),\\
z_{2,d}&:=-(c_1-c_6)\\
&=-\frac{1}{12}\sigma^i(
2 x_1 y_2 
- 2 x_2 y_1 
+ 7 x_2 y_5 
- 21 x_2 y_6
- 21 x_3 y_5 
- 21 x_3 y_6
- 21 x_4 y_5 
+ 21 x_4 y_6\\
&\quad 
- 7 x_5 y_2
+ 21 x_5 y_3 
+ 21 x_5 y_4 
+ 21 x_6 y_2 
+ 21 x_6 y_3
- 21 x_6 y_4
),\\
z_{3,d}&:=-(c_2-c_5)\\
&=-\frac{1}{12}\sigma^i(
2 x_1 y_3 
- 21 x_2 y_5
- 21 x_2 y_6
- 2 x_3 y_1
- 14 x_3 y_5
- 42 x_4 y_6
+ 21 x_5 y_2
+ 14 x_5 y_3\\
&\quad 
+ 21 x_6 y_2
+ 42 x_6 y_4 
),\\
z_{4,d}&:=-(c_3-c_4)\\
&=-\frac{1}{12}\sigma^i(
2 x_1 y_4
- 21 x_2 y_5
+ 21 x_2 y_6
- 42 x_3 y_6
-2 x_4 y_1
+ 7 x_4 y_5
+ 21 x_4 y_6
+ 21 x_5 y_2
- 7 x_5 y_4\\
&\quad 
- 21 x_6 y_2
+ 42 x_6 y_3
- 21 x_6 y_4
 ),\\
z_{5,d}&:=-\frac{1}{7}(c_1+c_2-2c_3-2c_4+c_5+c_6)\\
&=-\frac{1}{168}\sigma^i(
28 x_1 y_5
- 12 x_2 y_2
+ 36 x_2 y_3
+ 36 x_2 y_4
+ 36 x_3 y_2
+ 24 x_3 y_3
+ 36 x_4 y_2
- 12 x_4 y_4 \\
&\quad 
+ 28 x_5 y_1
- 49 x_5 y_5
+ 441 x_5 y_6
+ 441 x_6 y_5
+ 147 x_6 y_6
),\\
z_{6,d}&:=-\frac{1}{7}(c_1-c_2-c_5+c_6)\\
&=-\frac{1}{168}\sigma^i(
28 x_1 y_6
+ 12 x_2 y_2
+ 12 x_2 y_3
- 12 x_2 y_4
+ 12 x_3 y_2
+ 24 x_3 y_4 \\
&\quad 
- 12 x_4 y_2
+ 24 x_4 y_3
 - 12 x_4 y_4 
+ 147 x_5 y_5
+ 49 x_5 y_6 
+ 28 x_6 y_1
+ 49 x_6 y_5
- 441 x_6 y_6
) 
\end{split}
\end{align}
where $\sigma(x_1,x_2,x_3,x_4,x_5,x_6,y_1,y_2,y_3,y_4,y_5,y_6)=(x_1,-x_3,x_4,x_2,\frac{-x_5-3x_6}{2},\frac{x_5-x_6}{2},y_1,y_2,y_3,y_4,y_5,y_6)$ and $i=0$ 
(resp. $2,1,4,5,3$)
if $d=1$ (resp. $2,3,4,5,6$). 
By the equation (\ref{eq:DS7c}), 
we can verify that 
$z_{1,d},z_{2,d},z_{3,d}, z_{4,d},z_{5,d},z_{6,d}$ satisfy
\begin{align*}
&2z_{1,d}^2+42z_{2,d}^2+42z_{3,d}^2+42z_{4,d}^2+343z_{5,d}^2+1029z_{6,d}^2\\
&=72(c_1^2+ c_2^2 + c_3^2  + c_4^2 + c_5^2 + c_6^2)\\
&\quad  - 24(
(c_1c_2 + c_2c_3 + c_3c_4 + c_4c_5 + c_5c_6)+
(c_1c_3 + c_2c_4 + c_3c_5 + c_4c_6 + c_6c_1)\\
&\quad
+(c_1c_4 + c_2c_5 + c_3c_6 + c_5c_1 + c_6c_2))\\
&=72(c_1^2+ c_2^2 + c_3^2  + c_4^2 + c_5^2 + c_6^2- (c_1c_2 + c_2c_3 + c_3c_4 + c_4c_5 + c_5c_6))\\
&=72p^rq^s,
\end{align*}
and
\begin{align*}
&12z_{2,d}^2-12z_{4,d}^2+147z_{5,d}^2-441z_{6,d}^2+56z_{1,d}z_{6,d}+24z_{2,d}z_{3,d}-24z_{2,d}z_{4,d}+48z_{3,d}z_{4,d}+98z_{5,d}z_{6,d}\\
&=48(c_1c_2 + c_2c_3 + c_3c_4 + c_4c_5  + c_5c_6 - (c_1c_3 + c_2c_4 + c_3c_5 + c_4c_6  + c_6c_1))\\
&=0,\\
&12z_{3,d}^2-12z_{4,d}^2+49z_{5,d}^2-147z_{6,d}^2+28z_{1,d}z_{5,d}+28z_{1,d}z_{6,d}+48z_{2,d}z_{3,d}+24z_{2,d}z_{4,d}+24z_{3,d}z_{4,d}\\
&+490z_{5,d}z_{6,d}\\
&=48(c_1c_2 + c_2c_3 + c_3c_4 + c_4c_5 + c_5c_6 - (c_1c_4 + c_2c_5 + c_3c_6 + c_5c_1 + c_6c_2))\\
&=0.
\end{align*}
Thus $(z_{1,d},z_{2,d},z_{3,d},z_{4,d},z_{5,d},z_{6,d})$ yields solutions of the system of Diophantine equations 
for $p^rq^s$:
\begin{align*}
\begin{cases}
72p^rq^s=2z_{1,d}^2+42z_{2,d}^2+42z_{3,d}^2+42z_{4,d}^2+343z_{5,d}^2+1029z_{6,d}^2, \\
12z_{2,d}^2-12z_{4,d}^2+147z_{5,d}^2-441z_{6,d}^2+56z_{1,d}z_{6,d}+24z_{2,d}z_{3,d}
-24z_{2,d}z_{4,d}+48z_{3,d}z_{4,d}\\
+98z_{5,d}z_{6,d}=0, \\
12z_{3,d}^2-12z_{4,d}^2+49z_{5,d}^2-147z_{6,d}^2+28z_{1,d}z_{5,d}+28z_{1,d}z_{6,d}+48z_{2,d}z_{3,d}+24z_{2,d}z_{4,d}+24z_{3,d}z_{4,d}\\
+490z_{5,d}z_{6,d}=0,\\
z_{1,d}\equiv 1\ ({\rm mod}\ 7).
\end{cases}
\end{align*}

For
\[
{\bf z}^{\prime}=(z_{1,d}^{\prime},z_{2,d}^{\prime},z_{3,d}^{\prime},z_{4,d}^{\prime},z_{5,d}^{\prime},z_{6,d}^{\prime})
=(6z_{1,d},6z_{2,d},6z_{3,d},6z_{4,d},6z_{5,d},6z_{6,d}),
\]
consider the quadratic form
\[
q({\bf x})=2x_1^2+42x_2^2+42x_3^2+42x_4^2+343x_5^2+1029x_6^2
\]
on the variety $V$ defined by
\begin{align*}
\begin{dcases}
12x_2^2-12x_4^2+147x_5^2-441x_6^2+56x_1x_6+24x_2x_3
-24x_2x_4+48x_3x_4+98x_5x_6=0, \\
12x_3^2-12x_4^2+49x_5^2-147x_6^2+28x_1x_5+28x_1x_6+48x_2x_3+24x_2x_4+24x_3x_4+490x_5x_6=0.
\end{dcases}
\end{align*}
Then $q({\bf x})$ is not multiplicative on $V$ with respect to
\[
\varphi_d({\bf x},{\bf y})=(z_{1,d}^{\prime},z_{2,d}^{\prime},z_{3,d}^{\prime},z_{4,d}^{\prime},z_{5,d}^{\prime},z_{6,d}^{\prime}).
\]
Indeed, the point ${\bf z}^{\prime}$ still satisfies the defining equations of $V$:
\begin{align*}
\begin{cases}
12{(z^{\prime}_2)}^2-12{(z^{\prime}_4)}^2+147{(z^{\prime}_5)}^2-441{(z^{\prime}_6)}^2+56z^{\prime}_1z^{\prime}_6
+24z^{\prime}_2z^{\prime}_3-24z^{\prime}_2z^{\prime}_4+48z^{\prime}_3z^{\prime}_4+98z^{\prime}_5z^{\prime}_6=0, \\
12{(z^{\prime}_3)}^2-12{(z^{\prime}_4)}^2+49{(z^{\prime}_5)}^2-147{(z^{\prime}_6)}^2+28z^{\prime}_1z^{\prime}_5+28z^{\prime}_1z^{\prime}_6+48z^{\prime}_2z^{\prime}_3+24z^{\prime}_2z^{\prime}_4+24z^{\prime}_3z^{\prime}_4+490z^{\prime}_5z^{\prime}_6=0,
\end{cases}
\end{align*}
but
\begin{align*}
q(\varphi_d({\bf x},{\bf y}))=q({\bf z}^{\prime})
&=2{(z_{1,d}^{\prime})}^2+42{(z_{2,d}^{\prime})}^2+42{(z_{3,d}^{\prime})}^2+42{(z_{4,d}^{\prime})}^2+343{(z_{5,d}^{\prime})}^2+1029{(z_{6,d}^{\prime})}^2\\
&=36(2z_{1,d}^2+42z_{2,d}^2+42z_{3,d}^2+42z_{4,d}^2+343z_{5,d}^2+1029z_{6,d}^2)\\
&=36(72p^rq^s)\\
&=\frac{1}{2}(72p^r)(72q^s)\\
&=\frac{1}{2}q({\bf x})q({\bf y})
\neq q({\bf x})q({\bf y}).
\end{align*}

This suggests renormalizing the quadratic form. Define
\[
q^\prime({\bf x}):=\frac{1}{2}q({\bf x})
=x_1^2+21x_2^2+21x_3^2+21x_4^2+\frac{343}{2}x_5^2+\frac{1029}{2}x_6^2.
\]
Then
\begin{align*}
q^\prime(\varphi_d({\bf x},{\bf y}))
&=q^\prime({\bf z}^{\prime})
=\frac{1}{2}q({\bf z}^{\prime})\\
&={(z_{1,d}^{\prime})}^2+21{(z_{2,d}^{\prime})}^2+21{(z_{3,d}^{\prime})}^2+21{(z_{4,d}^{\prime})}^2+\frac{343}{2}{(z_{5,d}^{\prime})}^2+\frac{1029}{2}{(z_{6,d}^{\prime})}^2\\
&=\frac{1}{2}\left(\frac{1}{2}q({\bf x})q({\bf y})\right)\\
&=\left(\frac{1}{2}q({\bf x})\right)\left(\frac{1}{2}q({\bf y})\right)\\
&=q^\prime({\bf x})q^\prime({\bf y}).
\end{align*}
Hence $q^\prime({\bf x})$ is multiplicative on $V$ with respect to $\varphi_d({\bf x},{\bf y})=(z_{1,d}^{\prime},z_{2,d}^{\prime},z_{3,d}^{\prime},z_{4,d}^{\prime},z_{5,d}^{\prime},z_{6,d}^{\prime})$.

To compare this with Hoshi~\cite[Theorem 6]{Hos03}, we now specialize to $d=1$ and write
\begin{align*}
{\bf X}&=(X_1,X_2,X_3,X_4,X_5,X_6):=\left(x_1,x_2,x_3,x_4,\frac{7}{2}x_5,\frac{7}{2}x_6\right),\\
{\bf Y}&=(Y_1,Y_2,Y_3,Y_4,Y_5,Y_6):=\left(y_1,y_2,y_3,y_4,\frac{7}{2}y_5,\frac{7}{2}y_6\right),\\
{\bf Z}&=(Z_1,Z_2,Z_3,Z_4,Z_5,Z_6)
:=(z_{1,1}^{\prime},z_{2,1}^{\prime},z_{3,1}^{\prime},z_{4,1}^{\prime},\tfrac{7}{2}z_{5,1}^{\prime},\tfrac{7}{2}z_{6,1}^{\prime})\\
&\phantom{~=(Z_1,Z_2,Z_3,Z_4,Z_5,Z_6)}=(6z_{1,1},6z_{2,1},6z_{3,1},6z_{4,1},21z_{5,1},21z_{6,1}).
\end{align*}
Then
\[
q^\prime({\bf X})=X_1^2+21X_2^2+21X_3^2+21X_4^2+14X_5^2+42X_6^2,
\]
and
\begin{align*}
\begin{split}
Z_1
&=-(X_1Y_1+21X_2Y_2+21X_3Y_3+21X_4Y_4+14X_5Y_5+42X_6Y_6),\\
Z_2
&=-(X_1Y_2-X_2Y_1+X_2Y_5-3X_2Y_6-3X_3Y_5-3X_3Y_6-3X_4Y_5+X_4Y_6-X_5Y_2\\
&\qquad\quad +3X_5Y_3+3X_5Y_4+3X_6Y_2+3X_6Y_3-3X_6Y_4),\\
Z_3
&=-(X_1Y_3-3X_2Y_5-3X_2Y_6-X_3Y_1-2X_3Y_5-6X_4Y_6+3X_5Y_2+2X_5Y_3\\
&\qquad\quad +3X_6Y_2+6X_6Y_4),\\
Z_4
&=-(X_1Y_4-3X_2Y_5+3X_2Y_6-6X_3Y_6-2X_4Y_1+X_4Y_5+3X_4Y_6+3X_5Y_2-X_5Y_4\\
&\qquad\quad -3X_6Y_2+6X_6Y_3-3X_6Y_4),\\
Z_5
&=-\frac{1}{2}(2X_1Y_5-3X_2Y_2+9X_2Y_3+9X_2Y_4+9X_3Y_2+6X_3Y_3+9X_4Y_2-3X_4Y_4\\
&\qquad\qquad +2X_5Y_1-X_5Y_5+9X_5Y_6+9X_6Y_5+3X_6Y_6),\\
Z_6
&=-\frac{1}{2}(2X_1Y_6+3X_2Y_2+3X_2Y_3-3X_2Y_4+3X_3Y_2+6X_3Y_4-3X_4Y_2+6X_4Y_3\\
&\qquad\qquad -3X_4Y_4+3X_5Y_5+X_5Y_6+2X_6Y_1+X_6Y_5-9X_6Y_6).
\end{split}
\end{align*}
Therefore we recover the multiplicative quadratic form $q^\prime({\bf X})$ on the variety $V$
defined by
\begin{align*}
\begin{dcases}
3X_2^2-3X_4^2+3X_5^2-9X_6^2+4X_1X_6+6X_2X_3-6X_2X_4+12X_3X_4+2X_5X_6=0, \\
3X_3^2-3X_4^2+X_5^2-3X_6^2+2X_1X_5+2X_1X_6+12X_2X_3+6X_2X_4+6X_3X_4+10X_5X_6=0
\end{dcases}
\end{align*}
with $\varphi_1({\bf X},{\bf Y})=(Z_1,Z_2,Z_3,Z_4,Z_5,Z_6)$,
which coincides with the one given by Hoshi \cite[Section 2, Theorem~6]{Hos03}. (There is a typo in \cite[p.~74, line 18]{Hos03}: the term $-3X_6Y_6$ in the sixth component should be moved to the fifth component.)

In the case
$p=q$ (i.e. $x_i=y_i$), 
a lift of Jacobi sums $J^\ast_{p^r}(1,1)$ to $J^\ast_{p^{nr}}(1,1)$ 
can be obtained by Davenport and Hasse's lifting theorem 
(see Theorem~\ref{thm:corMT1} and Section \ref{S2}). 
Hoshi and Kanai \cite[Section 7, (3) $e=7$]{HK22} 
gave such an explicit lift of solutions of the system of Diophantine equations (\ref{eq:DS7p}) corresponding to Davenport and Hasse's lifting theorem, cf. Theorem~\ref{thm:corMT1} with 
$d=6=-1 \in(\Z/7\Z)^{\times}$. 
Indeed,   
\begin{align*}
z_{1,-1} &= -\frac{1}{12}(2x_1^2 - 42x_2^2 - 42x_3^2 - 42x_4^2 + 343x_5^2 +1029x_6^2),\\
z_{2,-1} &= -\frac{1}{12}(4x_1x_2 - 14x_2x_5 + 42x_3x_5 + 42x_4x_5 + 42x_2x_6 + 42x_3x_6 - 42x_4x_6),\\
z_{3,-1} &= -\frac{1}{12}(4x_1x_3 + 42x_2x_5 + 28x_3x_5 + 42x_2x_6 + 84x_4x_6),\\
z_{4,-1} &= -\frac{1}{12}(4x_1x_4 + 42x_2x_5 - 14x_4x_5 - 42x_2x_6 + 84x_3x_6 - 42x_4x_6),\\
z_{5,-1} &= -\frac{1}{168}(12x_2^2 - 72x_2x_3 - 24x_3^2 - 72x_2x_4 + 12x_4^2 + 56x_1x_5 - 49x_5^2 + 882x_5x_6 + 147x_6^2),\\
z_{6,-1} &= -\frac{1}{168}(-12x_2^2 - 24x_2x_3 + 24x_2x_4 - 48x_3x_4 + 12x_4^2 + 147x_5^2 + 56x_1x_6 + 98x_5x_6 - 441x_6^2)
\end{align*}
are given as $x_1^{(2)}$, $x_2^{(2)}$, $x_3^{(2)}$, 
$x_4^{(2)}$, $x_5^{(2)}$, $x_6^{(2)}$ 
in Hoshi and Kanai \cite[page 24, lines 5--10]{HK22}. 

\begin{remark}
A similar result can be obtained for $l=11$ 
by using Leonard and Williams \cite{LW75b}.
\end{remark}

\section{Proof of Theorem~\ref{mainth3}: Multiplicative $f$-ic forms on algebraic varieties}\label{S5}

In this section, we construct multiplicative $f$-ic forms on algebraic $K$-varieties for $f \ge 2$. 
The defining equations of $V$ are recast in terms of correlation sums arising from the cyclotomic setup, and this formulation is compatible with the multiplicative formula of Theorem~\ref{thm:BF2}, which yields the induced composition law and shows that $V$ is stable under this law. 
The regularity of the form $h$ is then obtained from its derived form and its relation to the norm form. 
This completes the proof of Theorem~\ref{mainth3}.

Let $h(x)=h(x_1,\dots,x_n)$ be an $f$-ic form, i.e. a homogeneous polynomial of total degree $f$ in $n$ variables. 
Assume that $h$ is regular, i.e. that the associated symmetric $f$-linear map $\theta:K^n\times\cdots\times K^n\to K$ has the property that $\theta({\bf v}_1,{\bf v}_2,\dots,{\bf v}_f)=0$ for any ${\bf v}_2,\dots, {\bf v}_f$ implies ${\bf v}_1=0$
(see Shapiro \cite[Chapter 16]{Sha00}, Schafer \cite{Sch70}, cf. Section \ref{S4} for $f=2$).

\begin{defn}
Let $f \geq 2$ be an integer and $K$ be a field with char $K\notmid f!$.  
Let $V \subset K^n$ be an algebraic $K$-variety. 
We say a regular $f$-ic form $h({\bf x})$ is {\it multiplicative on $V$} 
if there exists a bilinear map 
$\varphi :K^n \times K^n \rightarrow K^n$ such that
\[
\varphi(V \times V) \subset V \quad \mbox{and}\quad h({\bf x})h({\bf y})=h(\varphi({\bf x,y}))\mbox{ for any }{\bf x,y} \in V.
\]
\end{defn}

\begin{lem} \label{lem:BEW2}
Let $f\geq2$ be an integer and $l$ be an odd prime with $l \equiv 1\ ({\rm mod}\ f)$. 
Write $l=ef+1$.
Let $K$ be a field with char $K\notmid f!$, $l$ and 
$\Gal(K(\zeta_l)/K)\simeq (\Z/l\Z)^{\times}=\langle \gamma \rangle$. 
Define $\beta := \gamma^e$ and $M=K(\zeta_l)^{\langle \beta \rangle}$, 
i.e., $\Gal(K(\zeta_l)/M)\simeq \Z/f\Z = \langle \beta \rangle \leq  (\Z/l\Z)^{\times}$.
Let ${\bf p}\in K^{\times}$ and
\[
\alpha_{\bf p}=\sum_{k=0}^{l-1}a_k\zeta_l^k \quad (a_k \in K)\\
\quad \text{with} \quad
N_{K(\zeta_l)/M}(\alpha_{\bf p})= {\bf p}.
\]
Then we have
\begin{enumerate}
\item [$(1)$] ${\bf p} =S_0 - S_1$,
\item [$(2)$] $S_1=S_2=\cdots =S_{e}$
\end{enumerate}
where 
\begin{align*}
S_m = 
\left\{
\begin{array}{ll}
\displaystyle
\sum_{i_0 + \beta i_1 + \cdots +\beta^{f-1}i_{f-1}\equiv 0 \ (l)}a_{i_0}a_{i_1}\cdots a_{i_{f-1}} & (m = 0)\\
\displaystyle
\sum_{i_0 + \beta i_1 + \cdots +\beta^{f-1}i_{f-1}\equiv \gamma^{m} \ (l)}a_{i_0}a_{i_1}\cdots a_{i_{f-1}} & (m = 1,\ldots,e)
\end{array}
\right.
\end{align*}
and each subscript of the $a_k$'s should be taken modulo $l$. 
Moreover, we may assume that $a_0=0$ without loss of generality.
\end{lem}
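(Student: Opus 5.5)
We follow the proof of Lemma~\ref{lem:BEW} almost verbatim, replacing complex conjugation by the group $\langle\beta\rangle=\Gal(K(\zeta_l)/M)$. First, since $1+\zeta_l+\cdots+\zeta_l^{l-1}=0$, replacing $a_k$ by $a_k-a_0$ does not change $\alpha_{\bf p}$, so we may assume $a_0=0$. The values $S_m$ depend only on $\alpha_{\bf p}$ through the expansion below, so this normalization is harmless.

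Next we expand the norm. Since $\beta$ acts by $\sigma_\beta(\zeta_l)=\zeta_l^{\beta}$, we have
\[
N_{K(\zeta_l)/M}(\alpha_{\bf p})=\prod_{t=0}^{f-1}\sigma_{\beta^t}(\alpha_{\bf p})=\prod_{t=0}^{f-1}\Bigl(\sum_{i_t}a_{i_t}\zeta_l^{\beta^t i_t}\Bigr)=\sum_{k=0}^{l-1}T_k\zeta_l^k,
\]
where
\[
T_k=\sum_{i_0+\beta i_1+\cdots+\beta^{f-1}i_{f-1}\equiv k\ (l)}a_{i_0}\cdots a_{i_{f-1}}.
\]
The key observation is that $T_{\beta k}=T_k$. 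Indeed, the substitution $(i_0,\dots,i_{f-1})\mapsto(i_{f-1},i_0,\dots,i_{f-2})$ multiplies the weighted sum by $\beta$, because $\beta^f\equiv1\pmod l$; it is a bijection on index tuples and preserves the monomial. Hence $T_k$ depends only on the coset of $k$ in $(\Z/l\Z)^\times/\langle\beta\rangle$, whose representatives are $\gamma^1,\dots,\gamma^e$. Thus $T_{\gamma^m}=S_m$ for each $m$, and $T_0=S_0$. (Here $\gamma^e=\beta$, so the coset of $\gamma^e$ is that of $1$.)

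Grouping terms by cosets then gives
\[
{\bf p}=S_0+\sum_{m=1}^{e}S_m\,\eta_m,\qquad \eta_m=\sum_{t=0}^{f-1}\zeta_l^{\gamma^m\beta^t}.
\]
The $\eta_m$ are the Gaussian periods, and $\sum_{m=1}^e\eta_m=-1$. Substituting $1=-\sum_m\eta_m$ for the constant terms yields
\[
\sum_{m=1}^{e}(S_m-S_0+{\bf p})\,\eta_m=0.
\]
Since the $\zeta_l^j$ for $1\le j\le l-1$ are linearly independent over $K$ (this is the hypothesis $\Gal(K(\zeta_l)/K)\simeq(\Z/l\Z)^\times$) and the $\eta_m$ have disjoint supports, the $\eta_m$ are independent. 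Therefore $S_m=S_0-{\bf p}$ for all $m$, which gives both (1) and (2).

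The main point needing care is the cyclic-shift invariance $T_{\beta k}=T_k$. It requires checking that the weights $\beta^t$ cycle exactly under multiplication by $\beta$ modulo $l$, which uses $\beta^f\equiv1$ and the fact that indices are read modulo $l$. The rest is bookkeeping identical to the quadratic case $f=2$, where $\beta=-1$.
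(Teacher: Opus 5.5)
Your proof is correct and is essentially the paper's argument: you expand $N_{K(\zeta_l)/M}(\alpha_{\bf p})$, use the cyclic-shift invariance (the paper's remark that multiplying the congruence by $\beta^i$ only reorders the variables) to group the terms into Gaussian periods, replace $1$ by $-\sum_m\eta_m$, and conclude by linear independence of the periods, which you justify via disjoint supports in the basis $\zeta_l,\ldots,\zeta_l^{l-1}$ where the paper cites the normal basis property of $M/K$. One small correction: the individual $S_m$ are not determined by $\alpha_{\bf p}$ alone, since replacing every $a_k$ by $a_k-c$ shifts all of them by a common constant and only the differences $S_m-S_0$ are intrinsic; the normalization $a_0=0$ is harmless simply because your expansion argument never uses it.
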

\begin{proof}
By $N_{K(\zeta_l)/M}(\alpha_{\bf p})= {\bf p}$ and $[K(\zeta_l):M]=f$, we have 
\begin{align*}
{\bf p} = \Big( a_1\zeta_l +  \cdots +a_{l-1}\zeta_l^{l-1}\Big) \left( a_1\zeta_l^{\beta} +  \cdots +a_{l-1}\zeta_l^{\beta(l-1)}\right) \cdots \left( a_1\zeta_l^{\beta^{f-1}} +  \cdots +a_{l-1}\zeta_l^{\beta^{f-1}(l-1)}\right) ,
\end{align*}
that is,
\begin{align*}
{\bf p} 
&= S_0 + S_1\eta(0) + \cdots + S_{e-1}\eta(e-2) + S_e \eta(e-1)
\end{align*}
where, for $0\leq i \leq e-1$, $\eta(i)=\sum_{j=0}^{f-1}\zeta_l^{\gamma ^{ej+i}}$ are Gaussian $f$-periods of degree $e$.
Note that  $S_j =S_{j+ei} (1\leq i \leq e-1)$ 
because multiplying the equation $i_0 + \beta i_1 + \cdots +\beta^{f-1}i_{f-1}=\gamma^j$ by $\beta^i(=\gamma^{ei})$, 
the variables only change their order.
By $\zeta_l+\cdots+\zeta_l^{l-1}=-1$, we have
\[
(S_1 - S_0 + {\bf p})\eta(1)  + 
 \cdots + (S_{e-1} - S_0 + {\bf p})\eta(e-1) + (S_e - S_0 + {\bf p})\eta(0) = 0.
\]
Because $\{\eta(0), \ldots, \eta(e-1)\}$ forms a normal basis of $M/K$, 
we have
\[
S_1 - S_0 + {\bf p} = S_m - S_0 + {\bf p}=0\ \ (1\leq m\leq e).
\]
This implies that 
(1) ${\bf p}= S_0 - S_1$ and (2) $S_1=S_m$ 
$(2\leq m\leq e)$. 
\end{proof}

\begin{thm}\label{thm:BF2}
Let $f\geq2$ be an integer. 
Let $l$ be an odd prime with $l \equiv 1\ ({\rm mod}\ f)$ and $d \in (\Z/l\Z)^{\times}$. 
Write $l=ef+1$.
Let $K$ be a field with char $K\notmid f$, $l$ and 
$\Gal(K(\zeta_l)/K)\simeq (\Z/l\Z)^{\times}=\langle \gamma \rangle$. 
Define $\beta := \gamma^e$ and $M=K(\zeta_l)^{\langle \beta \rangle}$, 
i.e., $\Gal(K(\zeta_l)/M)\simeq \Z/f\Z = \langle \beta \rangle \leq  (\Z/l\Z)^{\times}$.
For ${\bf p}_1, {\bf p}_2\in K^{\times}$ we define
\begin{align*}
\alpha_{{\bf p}_1}=\sum_{i=0}^{l-1}a_i\zeta_l^i\quad \text{with} \quad
N_{K(\zeta_l)/M}(\alpha_{{\bf p}_1})= {\bf p}_1, \quad \alpha_{{\bf p}_2}=\sum_{j=0}^{l-1}b_j\zeta_l^j\quad \text{with} \quad
N_{K(\zeta_l)/M}(\alpha_{{\bf p}_2})= {\bf p}_2.
\end{align*}
Define also $\alpha_{{\bf p}_1{\bf p}_2,d}:=(-1)^{f-1}\sigma_{-d}(\alpha_{{\bf p}_1})\alpha_{{\bf p}_2}$ where $\sigma_{-d}\in \Gal(K(\zeta_l)/K)$ satisfies $\sigma_{-d}(\zeta_l)=\zeta_l^{-d}$.
Then $\alpha_{{\bf p}_1{\bf p}_2,d} = \sum_{k=0}^{l-1} c_k\zeta_l^k$ is given by
$$c_k = (-1)^{f-1}\sum_{i=0}^{l-1}a_{-d^{-1}i}b_{k-i},$$
and these coefficients satisfy
\begin{enumerate}
\item ${\bf p}_1{\bf p}_2 = S_0 - S_1$,
\item $S_1 = S_2 = \cdots = S_e$.
\end{enumerate}
Here, the sums $S_m$ are defined as
\begin{align*}
S_m = 
\left\{
\begin{array}{ll}
\displaystyle
\sum_{i_0 + \beta i_1 + \cdots +\beta^{f-1}i_{f-1}\equiv 0 \ (l)}c_{i_0}c_{i_1}\cdots c_{i_{f-1}} & (m = 0)\\
\displaystyle
\sum_{i_0 + \beta i_1 + \cdots +\beta^{f-1}i_{f-1}\equiv \gamma^{m} \ (l)}c_{i_0}c_{i_1}\cdots c_{i_{f-1}} & (m = 1,\ldots,e),
\end{array}
\right.
\end{align*}
with all indices taken modulo $l$.
Moreover, we may assume $a_0=b_0=c_0=0$ 
without loss of generality and in this case 
$\alpha_{{\bf p}_1{\bf p}_2,d}=\sum_{k=1}^{l-1}c_k\zeta_l^k$ 
is  given by 
$$c_k=(-1)^{f-1}\left( \sum_{i=1}^{l-1}a_{-d^{-1}i}b_{k-i} - \sum_{i=1}^{l-1}a_{-d^{-1}i}b_{-i}\right).$$
\end{thm}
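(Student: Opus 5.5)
The plan follows the proof of Theorem~\ref{thm:BF}, with Lemma~\ref{lem:BEW2} in place of Lemma~\ref{lem:BEW}. The work splits into three steps: show that $\alpha_{{\bf p}_1{\bf p}_2,d}$ has relative norm ${\bf p}_1{\bf p}_2$, compute its coefficients, and reduce to the normalized case.

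\textbf{Step 1: the norm.} Since $\mathrm{Gal}(K(\zeta_l)/K)$ is abelian, $\sigma_{-d}$ commutes with every element of $\mathrm{Gal}(K(\zeta_l)/M)=\langle\beta\rangle$. Hence
\[
N_{K(\zeta_l)/M}(\sigma_{-d}(\alpha_{{\bf p}_1}))=\sigma_{-d}\bigl(N_{K(\zeta_l)/M}(\alpha_{{\bf p}_1})\bigr)=\sigma_{-d}({\bf p}_1)={\bf p}_1,
\]
because ${\bf p}_1\in K$. The norm is multiplicative and $N_{K(\zeta_l)/M}((-1)^{f-1})=(-1)^{f(f-1)}=1$, since $f(f-1)$ is even. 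Therefore
\[
N_{K(\zeta_l)/M}(\alpha_{{\bf p}_1{\bf p}_2,d})={\bf p}_1{\bf p}_2\in K^\times .
\]
Lemma~\ref{lem:BEW2}, applied with ${\bf p}={\bf p}_1{\bf p}_2$ and $\alpha_{\bf p}=\alpha_{{\bf p}_1{\bf p}_2,d}=\sum_k c_k\zeta_l^k$, then gives (1) and (2) for the $S_m$ built from the $c_k$.

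\textbf{Step 2: the coefficients.} Write
\[
\sigma_{-d}(\alpha_{{\bf p}_1})=\sum_i a_i\zeta_l^{-di}=\sum_i a_{-d^{-1}i}\zeta_l^{i},
\]
reindexing modulo $l$. Multiplying by $\alpha_{{\bf p}_2}$ and collecting exponents modulo $l$ gives
\[
c_k=(-1)^{f-1}\sum_i a_{-d^{-1}i}b_{k-i}.
\]
This representation is not unique because $\sum_{k=0}^{l-1}\zeta_l^k=0$. That does not matter, since Lemma~\ref{lem:BEW2} holds for any representing coefficient vector.

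\textbf{Step 3: the normalized form.} For the ``moreover'' part, use that shifting all coefficients by a constant does not change the element; this is the first step of Lemma~\ref{lem:BEW2}'s normalization. So we may take $a_0=b_0=0$. Expand the product over $1\le i,j\le l-1$. The pairs with $i+j\equiv0$ contribute a constant $\sum_{i=1}^{l-1}a_{-d^{-1}i}b_{-i}$. Rewrite this constant using $1=-\sum_{k=1}^{l-1}\zeta_l^k$, so that it is subtracted from every $c_k$ with $k\ge1$; then $c_0=0$. The terms with $k-i\equiv0$ drop out automatically since $b_0=0$, so the inner sum may run over $i=1,\dots,l-1$. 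This yields the stated formula.

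\textbf{Main obstacle.} Step 1 requires that $(-1)^{f-1}$ and the twist by $\sigma_{-d}$ do not disturb the norm, and this is settled by the evenness of $f(f-1)$ and commutativity of the Galois group. After that, the only real care needed is in the index bookkeeping of Step 3: eliminating the constant term shifts each coefficient, and the vanishing of $a_0,b_0$ must be used to restrict the summation range.
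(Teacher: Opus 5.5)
Your proposal is correct and follows essentially the same route as the paper: compute $N_{K(\zeta_l)/M}(\alpha_{{\bf p}_1{\bf p}_2,d})=(-1)^{f(f-1)}{\bf p}_1{\bf p}_2={\bf p}_1{\bf p}_2$, expand the product to read off the $c_k$, apply Lemma~\ref{lem:BEW2}, and treat the normalized case as in Theorem~\ref{thm:BF}. You also spell out two points the paper leaves tacit: $\sigma_{-d}$ commutes with $\Gal(K(\zeta_l)/M)$, which gives $N_{K(\zeta_l)/M}(\sigma_{-d}(\alpha_{{\bf p}_1}))={\bf p}_1$, and Lemma~\ref{lem:BEW2} applies to any coefficient vector representing the element, not only the normalized one.
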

\begin{proof}
By the definition of $\alpha_{{\bf p}_1}$ and $\alpha_{{\bf p}_2}$, we have
\begin{align*}
N_{K(\zeta_l)/M}(\alpha_{{\bf p}_1{\bf p}_2,d})
=N_{K(\zeta_l)/M}((-1)^{f-1}\sigma_{-d}(\alpha_{{\bf p}_1})
\alpha_{{\bf p}_2})
=(-1)^{f(f-1)}{\bf p}_1{\bf p}_2={\bf p}_1{\bf p}_2.
\end{align*}
Using a calculation similar to that in the proof of Theorem~\ref{thm:BF}, we obtain
\begin{align*}
\alpha_{{\bf p}_1{\bf p}_2,d}=(-1)^{f-1}\sigma_{-d}(\alpha_{{\bf p}_1})\alpha_{{\bf p}_2}=(-1)^{f-1}\Bigl(\sum_{i=1}^{l-1}a_{-d^{-1}i}\zeta_l^i\Bigr) \Bigl(\sum_{j=1}^{l-1}b_j\zeta_l^j\Bigr)=\sum_{k=0}^{l-1}c_k\zeta_l^k.
\end{align*}
where $c_k=(-1)^{f-1}\sum_{i=0}^{l-1}a_{-d^{-1}i}b_{k-i}$. 
Thus the assertion follows from Lemma \ref{lem:BEW2}.

The case $a_0=b_0=0$ also follows from the same argument as in the proof of Theorem~\ref{thm:BF}, yielding $c_k=(-1)^{f-1}(\sum_{i=1}^{l-1}a_{-d^{-1}i}b_{k-i} - \sum_{i=1}^{l-1}a_{-d^{-1}i}b_{-i})$ 
when $\alpha_{{\bf p}_1{\bf p}_2,d}=\sum_{k=1}^{l-1}c_k\zeta_l^k$.
\end{proof}

To consider the regularity of the $f$-ic form $h({\bf x})$, we define the associated symmetric $f$-linear map $\theta_h$ explicitly.
Following Dr\'apal and Vojt\v{e}chovsk\'y \cite[\S2.1]{DV09}, for a map $P\colon V\to F$
with $P(0)=0$ we define the $f$-th defect (derived form)
\[
\Delta^{f}P(u_1,\dots,u_f)
=\sum_{\emptyset\neq I\subset\{1,\dots,f\}}(-1)^{f-|I|}
\,P\Big(\sum_{i\in I}u_i\Big),
\qquad
\theta_P:=\frac{1}{f!}\Delta^{f}P.
\]
If char $F\nmid f!$ and $P$ is a homogeneous polynomial map of degree $f$, then
$\theta_P$ is a symmetric $f$-linear form and $P(x)=\theta_P(x,\dots,x)$.

\begin{prop}\label{prop:norm_polarization}
Let $E/F$ be a separable field extension of degree $f$, and assume that char $F \nmid f!$. 
Let $N_{E/F}\colon E\to F$ be the norm map.
Then
\[
\theta_N(u_1,\dots,u_f):=\frac{1}{f!}\,\Delta^f N_{E/F}(u_1,\dots,u_f)
\]
is a symmetric $f$-linear form on $E$.
\end{prop}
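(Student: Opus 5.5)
The plan is to reduce the statement to the general fact recalled just before it: for a homogeneous polynomial map $P$ of degree $f$ with char $F\nmid f!$, the normalized defect $\theta_P=\frac{1}{f!}\Delta^fP$ is symmetric and $f$-linear. So the main step is to show that $N_{E/F}$, viewed on the $F$-vector space $E$, is a homogeneous polynomial map of degree $f$.

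\textbf{Step 1: the norm is a homogeneous form of degree $f$.} Fix an $F$-basis $\omega_1,\dots,\omega_f$ of $E$ and write $u=\sum_k x_k\omega_k$. Since $E/F$ is separable, choose a normal closure $\widetilde E$ and the $f$ distinct $F$-embeddings $\tau_1,\dots,\tau_f\colon E\to\widetilde E$. Then
\[
N_{E/F}(u)=\prod_{j=1}^{f}\tau_j(u)=\prod_{j=1}^{f}\Bigl(\sum_{k=1}^{f}x_k\tau_j(\omega_k)\Bigr).
\]
This is a homogeneous polynomial of degree $f$ in $x_1,\dots,x_f$ with coefficients in $\widetilde E$. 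Its coefficients are permuted-invariant under $\Gal(\widetilde E/F)$, because that group permutes the $\tau_j$, so they lie in $F$. Alternatively, and without using separability, $N_{E/F}(u)=\det(m_u)$, where $m_u$ is multiplication by $u$ in the chosen basis. The entries of $m_u$ are $F$-linear in the $x_k$, so the determinant is a homogeneous degree-$f$ polynomial over $F$.

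\textbf{Step 2: compute the defect.} Write $P=N_{E/F}=\sum_{|\mu|=f}c_\mu x^\mu$. In $\Delta^fP(u_1,\dots,u_f)$, expand each $P(\sum_{i\in I}u_i)$ multilinearly as a sum over words $(i_1,\dots,i_f)\in I^f$ of the polarized terms $\Phi(u_{i_1},\dots,u_{i_f})$. Here $\Phi$ is the unique symmetric $f$-linear form with $\Phi(x,\dots,x)=P(x)$, and its existence over $F$ requires dividing by multinomial coefficients; this is where char $F\nmid f!$ is used.

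By inclusion–exclusion, $\sum_{\emptyset\ne I}(-1)^{f-|I|}[\text{words in }I^f]$ keeps exactly the words that use every index $1,\dots,f$, i.e. the permutations. This gives $\Delta^fP=f!\,\Phi(u_1,\dots,u_f)$, so $\theta_N=\Phi$, which is symmetric and $f$-linear.

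\textbf{Where care is needed.} No step is a real obstacle. The only points needing attention are confirming that the coefficients lie in $F$, which the determinant description settles immediately, and noting where char $F\nmid f!$ enters: it ensures the polarization $\Phi$ exists and that $\theta_N$ is well defined. Symmetry is also visible directly, since $\Delta^fP$ is manifestly invariant under permuting $u_1,\dots,u_f$.
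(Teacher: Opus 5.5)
Your proof is correct and follows the same route as the paper: it also writes $N_{E/F}(x)=\det(m_x)$ to see that the norm is a homogeneous degree-$f$ polynomial map on $E$, and then applies the general polarization fact for such maps when char $F\nmid f!$. The only difference is that the paper cites Dr\'apal and Vojt\v{e}chovsk\'y for that fact, whereas your inclusion--exclusion computation $\Delta^fP=f!\,\Phi(u_1,\dots,u_f)$ proves it directly.
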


\begin{proof}
Regard $E$ as an $F$-vector space of dimension $f$. 
For $x\in E$, let $m_x\colon E\to E$ be multiplication by $x$. 
Then $x\mapsto m_x$ is $F$-linear and
\[
N_{E/F}(x)=\det(m_x),
\]
so $N_{E/F}$ is a homogeneous polynomial map of degree $f$ on $E$.

Now apply the basic properties of derived forms recalled above.
Since $N_{E/F}$ is a homogeneous polynomial map of degree $f$ and
char $F\nmid f!$, it follows from \cite[\S2.1]{DV09} that the $f$-th defect
$\Delta^f N_{E/F}$ is symmetric and $f$-additive, hence $(1/f!)\Delta^f N_{E/F}$ is $F$-linear
in each argument. 
Therefore $\theta_N$ is a symmetric $f$-linear form on $E$.
\end{proof}

\begin{lem}\label{lem:h_regularity}
With the notation of Theorem~\ref{mainth3}, the $f$-ic form $h({\bf x})$ on $K^{l-1}$ is regular.
\end{lem}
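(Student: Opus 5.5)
The plan is to realize $h$ as a $K$-linear functional applied to the relative norm $N_{L/M}$, and then deduce regularity from the nondegeneracy of $N_{L/M}$ and of the trace form of $M/K$. For ${\bf x}\in K^{l-1}$ put $\alpha_{\bf x}=\sum_{i=1}^{l-1}x_i\zeta_l^i$. Since $\zeta_l,\dots,\zeta_l^{l-1}$ is a $K$-basis of $L$ (by the hypothesis $\Gal(L/K)\simeq(\Z/l\Z)^\times$), the map ${\bf x}\mapsto\alpha_{\bf x}$ is a $K$-linear isomorphism $K^{l-1}\to L$. The computation in the proof of Lemma~\ref{lem:BEW2} (with $a_0=0$, $a_i=x_i$, and without assuming the norm lies in $K$) gives
\[
N_{L/M}(\alpha_{\bf x})=S_0({\bf x})+\sum_{m=1}^{e}S_m({\bf x})\,\eta(m-1).
\]
Using $1=-\sum_{j=0}^{e-1}\eta(j)$ and the normal basis $\{\eta(0),\dots,\eta(e-1)\}$ of $M/K$, the $\eta(0)$-coordinate of $N_{L/M}(\alpha_{\bf x})$ is, up to sign and the fixed indexing of the $S_m$, exactly $S_1({\bf x})-S_0({\bf x})=-h({\bf x})$. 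The first step is therefore to write this coordinate functional as $y\mapsto \Tr_{M/K}(\omega y)$ for a fixed $\omega\in M^\times$. This is possible because $M/K$ is separable, so the trace form is nondegenerate and every nonzero $K$-linear functional on $M$ has this shape. We obtain
\[
h({\bf x})=\Tr_{M/K}\bigl(\omega\,N_{L/M}(\alpha_{\bf x})\bigr),\qquad \omega\in M^\times .
\]
I expect the careful bookkeeping of indices in this identification (which $S_m$ goes with which $\eta$, and the sign) to be the main technical point. Only the nonvanishing of $\omega$ matters, and that is clear because the coordinate functional is nonzero.

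The second step is polarization. Since $\operatorname{char}K\nmid f!$, Proposition~\ref{prop:norm_polarization} applied to $E/F=L/M$ gives a symmetric $M$-multilinear form $\theta_N$ on $L$ with $\theta_N(u,\dots,u)=N_{L/M}(u)$. The map $({\bf v}_1,\dots,{\bf v}_f)\mapsto \Tr_{M/K}\bigl(\omega\,\theta_N(\alpha_{{\bf v}_1},\dots,\alpha_{{\bf v}_f})\bigr)$ is then a symmetric $K$-multilinear form on $K^{l-1}$ whose diagonal is $h$. By uniqueness of the polarization when $\operatorname{char}K\nmid f!$, it equals $\theta_h$.

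The third step is nondegeneracy. Let ${\bf v}_1\neq0$, so that $\alpha:=\alpha_{{\bf v}_1}\neq0$. We have $z:=\theta_N(\alpha,\alpha,\dots,\alpha)=N_{L/M}(\alpha)\neq0$. For $m\in M$, choose ${\bf v}_2$ with $\alpha_{{\bf v}_2}=m\alpha$ and ${\bf v}_3=\dots={\bf v}_f={\bf v}_1$ (when $f=2$ only ${\bf v}_2$ is used). By $M$-linearity of $\theta_N$,
\[
\theta_h({\bf v}_1,{\bf v}_2,\dots,{\bf v}_f)=\Tr_{M/K}(\omega z\, m).
\]
Since $\omega z\neq0$ and the trace form of the separable extension $M/K$ is nondegenerate, some $m\in M$ makes this value nonzero. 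Hence $\theta_h({\bf v}_1,\cdot,\dots,\cdot)\not\equiv0$ whenever ${\bf v}_1\ne0$, which is the regularity of $h$.
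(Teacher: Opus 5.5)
Your proof is correct. It shares the paper's skeleton, but the nondegeneracy step is genuinely different. The common part is this: $h$ is a nonzero $K$-linear functional $\lambda$ composed with $N_{L/M}\circ\iota$. In the paper, $\lambda$ is the projection with $\lambda(\eta(0))=-1$ and $\lambda(\eta(r))=0$ for $r\ge 1$; in your version it is $\Tr_{M/K}(\omega\,\cdot)$. Polarization then commutes with this functional, giving $\theta_h=\lambda\circ\theta_N\circ(\iota\times\cdots\times\iota)$.

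The routes differ in the last step. The paper computes the polarized norm explicitly, $\theta_N(v,\alpha,\dots,\alpha)=\frac1f\Tr_{L/M}(vP(\alpha))$ with $P(\alpha)=\prod_{j=1}^{f-1}\sigma^j(\alpha)$. It then needs three further facts: $\Psi=\lambda\circ\Tr_{L/M}\neq 0$ (checked via $\Psi(1)=f$), nondegeneracy of the trace pairing of $L/K$, and $P(\alpha)\neq 0$. You use only two things: $\theta_N$ is $M$-multilinear, and its diagonal $\theta_N(\alpha,\dots,\alpha)=N_{L/M}(\alpha)$ is nonzero for $\alpha\neq 0$. Restricting the free slot to the line $M\alpha$ gives the value $\lambda\bigl(m\,N_{L/M}(\alpha)\bigr)$. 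This sweeps out $\lambda(M)\neq 0$ as $m$ runs over $M$.

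For $v=m\alpha$ the paper's formula collapses to $m\,N_{L/M}(\alpha)$. So your argument is the paper's computation restricted to the one direction where no trace formula is needed.

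What your route buys is brevity and generality. It is really the general fact that transferring an $M$-valued symmetric form with anisotropic diagonal along any nonzero $K$-linear functional gives a regular $K$-form. You could even drop the representation $\lambda=\Tr_{M/K}(\omega\,\cdot)$ and the appeal to nondegeneracy of the trace form of $M/K$, since $m\mapsto m\,N_{L/M}(\alpha)$ is already a bijection of $M$.

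What the paper's computation buys is more information than regularity needs. It identifies the whole linear form $v\mapsto\theta_h(v,\alpha,\dots,\alpha)$, which is $\frac1f$ of the differential of $h$ at $\alpha$.
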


\begin{proof}
Set $L=K(\zeta_l)$ and $M=L^{\langle\beta\rangle}$, so $[L:M]=f$.
Via the $K$-linear isomorphism
\[
\iota:K^{l-1}\xrightarrow{\ \sim\ }L,\qquad
{\bf x}=(x_1,\dots,x_{l-1})\longmapsto
\alpha_{\bf x}:=\sum_{i=1}^{l-1}x_i\zeta_l^i,
\]
we identify $K^{l-1}$ with $L$. Let $\sigma\in\Gal(L/M)$ be the generator determined by
$\sigma(\zeta_l)=\zeta_l^\beta$. Then
\[
N_{L/M}(z)=\prod_{t=0}^{f-1}\sigma^t(z)\qquad(z\in L).
\]

\medskip\noindent
\textbf{Step 1: $h$ as a $K$-linear projection of the norm.}
As in the proof of Lemma~\ref{lem:BEW2}, expanding $N_{L/M}(\alpha_{\bf x})$ and grouping terms by
$\langle\beta\rangle$-orbits yields
\begin{equation}\label{eq:norm_period_expansion}
N_{L/M}(\alpha_{\bf x})
= S_0({\bf x})+\sum_{m=1}^{e}S_m({\bf x})\,\eta(m-1)
\end{equation}
where $S_m({\bf x})$ are the polynomials defined in Lemma~\ref{lem:BEW2} (with $x_0=0$ and indices modulo $l$).
Since $\eta(0)+\cdots+\eta(e-1)=\zeta_l+\cdots+\zeta_l^{l-1}=-1$, we can rewrite \eqref{eq:norm_period_expansion} as
\[
N_{L/M}(\alpha_{\bf x})
=\sum_{r=0}^{e-1}\bigl(S_{r+1}({\bf x})-S_0({\bf x})\bigr)\,\eta(r).
\]
Define a $K$-linear projection $\lambda:M\to K$ by
\[
\lambda(\eta(0))=-1,\qquad \lambda(\eta(r))=0\ \ (1\le r\le e-1).
\]
Then applying $\lambda$ gives
\[
\lambda\bigl(N_{L/M}(\alpha_{\bf x})\bigr)=-(S_1({\bf x})-S_0({\bf x}))=h({\bf x}),
\]
hence
\begin{equation}\label{eq:h_as_proj_norm}
h=\lambda\circ N_{L/M}\circ\iota.
\end{equation}

\medskip\noindent
\textbf{Step 2: Polarization and a trace specialization.}
Let $\theta_N$ be the symmetric $f$-linear form associated with $N_{L/M}$ over the base field $M$
(Proposition~\ref{prop:norm_polarization} applied to $E=L$, $F=M$), and let $\theta_h$ be the symmetric
$f$-linear form associated with $h$.
By \eqref{eq:h_as_proj_norm} and linearity of polarization,
\begin{equation}\label{eq:theta_h_comp}
\theta_h=\lambda\circ\theta_N\circ(\iota\times\cdots\times\iota).
\end{equation}
A standard computation from $N_{L/M}(z)=\prod_{t=0}^{f-1}\sigma^t(z)$ gives, for $\alpha,v\in L$,
\begin{equation}\label{eq:thetaN_trace}
\theta_N(v,\alpha,\dots,\alpha)
=\frac{1}{f}\sum_{k=0}^{f-1}\sigma^k(v)\prod_{j\ne k}\sigma^j(\alpha)
=\frac{1}{f}\Tr_{L/M}\!\bigl(vP(\alpha)\bigr),
\qquad
P(\alpha):=\prod_{j=1}^{f-1}\sigma^j(\alpha).
\end{equation}

\medskip\noindent
\textbf{Step 3: Regularity.}
It suffices to show that if $\alpha \in L$ satisfies $\theta_h(v,\alpha,\dots,\alpha)=0$ for all $v\in L$,
then $\alpha=0$.
Indeed, assuming this, let $u\in L$ be such that
$\theta_h(u,w_1,\dots,w_f)=0$ for all $w_1,\dots,w_f\in L$.
Then, in particular, by taking $w_1=v$ and $w_2=\cdots=w_f=u$, we obtain
$\theta_h(u,v,u,\dots,u)=0$ for all $v\in L$.
By symmetry of $\theta_h$, this implies
$\theta_h(v,u,\dots,u)=0$ for all $v\in L$.
Hence $u=0$.

Assume $\alpha\in L$ satisfies $\theta_h(v,\alpha,\dots,\alpha)=0$ for all $v\in L$.
Using \eqref{eq:theta_h_comp} and \eqref{eq:thetaN_trace} we obtain
\[
\lambda\!\left(\Tr_{L/M}(vP(\alpha))\right)=0\qquad(\forall v\in L).
\]
Set $\Psi:=\lambda\circ\Tr_{L/M}:L\to K$. 
Then $\Psi\neq 0$: indeed, $\eta(0)+\cdots+\eta(e-1)=-1$ implies $\lambda(1)=1$, hence
\[
\Psi(1)=\lambda\!\left(\Tr_{L/M}(1)\right)=\lambda(f\cdot 1)=f\neq 0
\]
since char $K\nmid f$.
Thus $\Psi(vP(\alpha))=0$ for all $v\in L$.

Because $L/K$ is finite separable, the trace pairing $(x,y)\mapsto\Tr_{L/K}(xy)$ is non-degenerate.
Therefore there exists $c\in L^\times$ such that $\Psi(z)=\Tr_{L/K}(cz)$ for all $z\in L$. 
Consequently,
\[
0=\Psi(vP(\alpha))=\Tr_{L/K}(c\,v\,P(\alpha))\qquad(\forall v\in L),
\]
and non-degeneracy forces $cP(\alpha)=0$, hence $P(\alpha)=0$. Since $L$ is a field and
$P(\alpha)=\prod_{j=1}^{f-1}\sigma^j(\alpha)$, we conclude $\alpha=0$.
Thus $\theta_h(v,\alpha,\dots,\alpha)\equiv 0$ implies $\alpha=0$, i.e.\ $h$ is regular.
\end{proof}

\begin{proof}[Proof of Theorem~\ref{mainth3}]
The regularity of the $f$-ic form $h({\bf x})$ follows from Lemma \ref{lem:h_regularity}.

By the definition of $V$ in Theorem~\ref{mainth3} and Lemma~\ref{lem:BEW2}, for ${\bf x}\in K^{l-1}$ we have
\[
{\bf x}\in V \quad\Longleftrightarrow\quad h_m({\bf x})=0\ (m=1,\dots,e-1)
\quad\Longleftrightarrow\quad S_1({\bf x})=S_2({\bf x})=\cdots=S_e({\bf x})
\]
where $S_j(x)$ denotes the $j$-th correlation sum of the coefficients of $\sum_{i=1}^{l-1}x_i\zeta_l^{i}$ (cf. proof of Lemma~\ref{lem:BEW2}).

Now let ${\bf x,y}\in V$ and define ${\bf z}=\varphi_d({\bf x,y})$.
By Theorem~\ref{thm:BF2}, the coefficients of $\alpha_{\bf z}=\sum_{i=1}^{l-1}c_i\zeta_l^i$ are given by $c_k=(-1)^{f-1}\left( \sum_{i=1}^{l-1}a_{-d^{-1}i}b_{k-i} - \sum_{i=1}^{l-1}a_{-d^{-1}i}b_{-i}\right)$, and in
particular the corresponding correlation sums satisfy
\[
S_1({\bf z})=S_2({\bf z})=\cdots=S_e({\bf z}).
\]
Hence ${\bf z}\in V$, i.e.\ $\varphi_d(V\times V)\subset V$.

Finally, Theorem~\ref{thm:BF2} (2) gives $h({\bf z})=h({\bf x})h({\bf y})$, i.e.
\[
h(\varphi_d({\bf x,y}))=h({\bf x})h({\bf y}),
\]
and Theorem~\ref{mainth3} follows.
\end{proof}

\begin{example}
[$l=5, f=4, e=1, \gamma =\beta =2$]
\begin{align*}
h({\bf x})=h(x_1,\ldots,x_4)&=S_0 - S_1
\end{align*}
where
\begin{align*}
S_0({\bf x})&= \sum_{s + 2t + 4u + 3v\equiv 0 \ (5)} x_{s}x_{t}x_{u}x_{v}\\
&= x_1^4 + x_2^4 + x_3^4 + x_4^4\\
&+ 4(
x_1x_2x_3x_4 
+x_1^2x_2x_3 
+ x_1x_2x_3^2 
+ x_1^2x_2x_4 
+ x_1x_2^2x_4 
+ x_1x_3^2x_4
+ x_1x_3x_4^2 
+ x_2^2x_3x_4 
+ x_2x_3x_4^2 
) \\
&+ 2(x_1^2x_2^2  
+ x_1^2x_3^2 
+ x_1^2x_4^2 
+ x_2^2x_3^2  
+ x_2^2x_4^2 
+ x_3^2x_4^2 ),\\
S_1({\bf x})&= \sum_{s + 2t + 4u + 3v\equiv 2 \ (5)} x_{s}x_{t}x_{u}x_{v}\\
&= 5x_1x_2x_3x_4 + x_1^2x_2^2 + x_1^2x_3^2 + x_1^2x_4^2 + x_2^2x_3^2+ x_2^2x_4^2 + x_3^2x_4^2 \\
&+ x_1x_2^3 + x_1x_3^3 + x_1x_4^3 
+ x_1^3x_2 + x_1^3x_3 + x_1^3x_4 
+ x_2x_3^3 + x_2x_4^3 
+ x_2^3x_3 + x_2^3x_4 
+ x_3^3x_4   + x_3x_4^3 \\
& + 2(x_1x_2x_3^2  +  x_1^2x_2x_3 
+ x_1x_2^2x_4  +  x_1^2x_2x_4 
+ x_1x_3^2x_4 + x_1x_3x_4^2 
+ x_2^2x_3x_4 + x_2x_3x_4^2 ) \\
&+ 3(x_1x_2^2x_3 + x_1x_2x_4^2 + x_1^2x_3x_4 + x_2x_3^2x_4 ).
\end{align*}
Thus
\begin{align*}
h({\bf x})&= - x_1x_2x_3x_4 
+ x_1^4 + x_2^4 + x_3^4  + x_4^4 
+ x_1^2x_3^2 + x_1^2x_2^2  + x_1^2x_4^2 + x_2^2x_3^2 + x_2^2x_4^2 + x_3^2x_4^2 \\
&- (x_1x_2^3 + x_1x_3^3 + x_1x_4^3  
+ x_1^3x_2   + x_1^3x_3   + x_1^3x_4  
+ x_2x_3^3 + x_2x_4^3
+ x_2^3x_4 + x_2^3x_3
+ x_3x_4^3 + x_3^3x_4  )\\
&+ 2( x_1x_2x_3^2  + x_1x_2^2x_4 + x_1x_3^2x_4 + x_1x_3x_4^2 
+ x_1^2x_2x_3 + x_1^2x_2x_4 + x_2^2x_3x_4  + x_2x_3x_4^2)\\
&- 3(x_1x_2^2x_3 + x_1x_2x_4^2 + x_1^2x_3x_4 + x_2x_3^2x_4).
\end{align*}

Indeed, for $d \in (\Z/5\Z)^\times$, we obtain an explicit bilinear map
$\varphi_d : K^4 \times K^4 \to K^4$
such that
\[
h({\bf x})h({\bf y}) = h(\varphi_d({\bf x},{\bf y}))
\]
for ${\bf x},{\bf y}\in V$.
It is given explicitly by $\varphi_d({\bf x},{\bf y})
= (z_{1,d}, z_{2,d}, z_{3,d}, z_{4,d})$
where
\[
z_{k,d}
= -\left(\sum_{i=1}^{4}x_{-d^{-1}i}y_{k-i} - \sum_{i=1}^{4}x_{-d^{-1}i}y_{-i}\right).
\]
For example, we have for $d=1 \in (\Z/5\Z)^\times$,
\begin{align*}
z_{1,1}
&=-(x_3y_4 + x_2y_3 + x_1y_2 )
+(x_4y_4 + x_3y_3 + x_2y_2 + x_1y_1 ),\\
z_{2,1}
&=-(x_{4}y_1 + x_2y_4 + x_1y_3 )
+(x_{4}y_4 + x_3y_3 + x_2y_2 + x_1y_1 ),\\
z_{3,1}
&=-(x_4y_2 + x_3y_1 + x_1y_4 )
+(x_4y_4 + x_3y_3 + x_2y_2 + x_1y_1 ),\\
z_{4,1}
&=-(x_{4}y_3 + x_3y_2 + x_2y_1)
+(x_{4}y_4 + x_3y_3 + x_2y_2 + x_1y_1 ).
\end{align*}
\end{example}

\begin{example}[$l=7, f=3, e=2, \gamma =3, \beta =2$]
\begin{align*}
h({\bf x})=h(x_1,\ldots,x_6)&=S_0 - S_1
\end{align*}
and
\begin{align*}
f_1({\bf x})=f_1(x_1,\ldots,x_6)=S_1-S_2
\end{align*}
where
\begin{align*}
S_0({\bf x})&= \sum_{s + 2t + 4u\equiv 0 \ (7)} x_{s}x_{t}x_{u}\\
&= x_1^3 + x_2^3 + x_4^3 + x_3^3 + x_5^3 + x_6^3\\
&+ 3(x_1x_2x_4) + 3(x_3x_5x_6) + 3(x_1x_2x_6 + x_2x_4x_5+x_1x_3x_4)  + 3(x_1x_5x_6 + x_2x_3x_5 +  x_3x_4x_6),\\
S_1({\bf x})&= \sum_{s + 2t + 4u\equiv 3 \ (7)} x_{s}x_{t}x_{u}\\
&=x_1x_2^2 + 
x_1x_3^2 + 
x_1x_5^2 +
x_2x_4^2 +
x_2x_3^2 +
x_2x_6^2 +
x_3x_4^2 + 
x_3x_5^2 +
x_4x_5^2 + 
x_4x_6^2 + 
x_5x_6^2 \\
&+x_1^2x_4 +
x_1^2x_6 + 
x_2^2x_5 +
x_3^2x_6 \\
&+x_1x_2x_3 + 
x_1x_2x_5 +
x_1x_3x_5 + 
x_1x_3x_6 +
x_1x_4x_6\\
&+ x_2x_3x_4 +
x_2x_3x_6 +
x_2x_4x_6 +
x_2x_5x_6 +
x_3x_4x_5 + 
x_4x_5x_6
\\
&+x_1x_2x_4 +
x_1x_4x_5 +
x_1x_5x_6
+x_2x_3x_5 +
x_3x_4x_6,\\
S_2({\bf x})&= \sum_{s + 2t + 4u\equiv 2 \ (7)} x_{s}x_{t}x_{u}\\
&= 
x_1x_4^2 +
x_1x_6^2 +
x_2 x_5^2+
x_3x_6^2 \\
&+ x_1^2x_2 +
x_1^2x_3 +
x_1^2x_5 +
x_2^2x_3 +
x_2^2x_4 +
x_2^2x_6 +
x_3^2x_4 +
x_3^2x_5 +
x_4^2x_6 +
x_4^2x_5 +
x_5^2x_6 +\\
&+ x_1x_2x_3 +
x_1x_2x_5 +
x_1x_3x_5 +
x_1x_3x_6 +
x_1x_4x_6\\
&+ x_2x_3x_4 +
x_2x_3x_6 +
x_2x_4x_6 +
x_2x_5x_6 +
x_3x_4x_5 +
x_4x_5x_6
\\
&+x_1x_2x_6 +
x_1x_3x_4 +
x_1x_4x_5
+ 
x_2x_4x_5 +
x_3x_5x_6.
\end{align*}
Thus
\begin{align*}
h({\bf x})&=x_1^3 + x_2^3 + x_3^3 + x_4^3 + x_5^3 + x_6^3\\
&- (x_1^2x_2 + x_1^2x_5 + x_1^2x_3 + x_1x_6^2+ x_1x_4^2   
+ x_2x_5^2 + x_2^2x_6  + x_2^2x_3 + x_2^2x_4 \\
&+ x_3x_6^2  + x_3^2x_4 + x_3^2x_5  + x_4^2x_6  + x_4^2x_5  + x_5^2x_6\\
&+ x_1x_2x_3 + x_1x_2x_5 + x_1x_3x_5  + x_1x_3x_6 + x_1x_4x_5  + x_1x_4x_6 \\
&+ x_2x_3x_4 + x_2x_3x_6 + x_2x_4x_6  + x_2x_5x_6 + x_3x_4x_5  + x_4x_5x_6)\\
&+ 2(x_1x_2x_4 + x_1x_5x_6+ x_2x_3x_5  + x_3x_4x_6)\\
&+ 3(x_1x_3x_4 + x_2x_4x_5 + x_1x_2x_6 + x_3x_5x_6),\\
\end{align*}
\begin{align*}
f_1({\bf x})&=x_1^2x_2 - x_1x_2^2 + x_1^2x_3 + x_2^2x_3 - x_1x_3^2 - x_2x_3^2 - x_1^2x_4 - x_1x_2x_4 + x_2^2x_4 + x_1x_3x_4 + x_3^2x_4 \\
&+ x_1x_4^2 - x_2x_4^2 - x_3x_4^2 + x_1^2x_5 - x_2^2x_5 - x_2x_3x_5 + x_3^2x_5 + x_2x_4x_5 + x_4^2x_5 - x_1x_5^2 + x_2x_5^2\\
&- x_3x_5^2 - x_4x_5^2 - x_1^2x_6 + x_1x_2x_6 + x_2^2x_6 - x_3^2x_6 - x_3x_4x_6 + x_4^2x_6 - x_1x_5x_6 + x_3x_5x_6 \\
&+ x_5^2x_6 + x_1x_6^2 - x_2x_6^2 + x_3x_6^2 - x_4x_6^2 - x_5x_6^2.
\end{align*}

Indeed, for $d \in (\Z/7\Z)^\times$, we obtain an explicit bilinear map $\varphi_d :K^6 \times K^6 \rightarrow K^6$ such that 
\begin{align*}
h({\bf x})h({\bf y})=h(\varphi_d ({\bf x}, {\bf y}))
\end{align*}
for ${\bf x},{\bf y} \in V$.
It is given explicitly by
$\varphi_d ({\bf x}, {\bf y}) = (z_{1,d}, z_{2,d}, z_{3,d}, z_{4,d},z_{5,d},z_{6,d})$
where 
\[
z_{k,d}=(-1)^{f-1}\left(\sum_{i=1}^{6}x_{-d^{-1}i}y_{k-i} - \sum_{i=1}^{6}x_{-d^{-1}i}y_{-i}\right).
\]
For example, we have for $d=1 \in (\Z/7\Z)^\times$,
\begin{align*}
z_{1,1}
&=( x_5y_6 + x_4y_5 + x_3y_4 + x_2y_3 + x_1y_2 )
-(x_6y_6 + x_5y_5 + x_4y_4 + x_3y_3 + x_2y_2 + x_1y_1 ),\\
z_{2,1}
&=(x_6y_1 + x_4y_6 + x_3y_5 + x_2y_4 + x_1y_3 )
-(x_6y_6 + x_5y_5 + x_4y_4 + x_3y_3 + x_2y_2 + x_1y_1 ),\\
z_{3,1}
&=(x_6y_2 + x_5y_1 + x_3y_6 + x_2y_5 + x_1y_4 )
-(x_6y_6 + x_5y_5 + x_4y_4 + x_3y_3 + x_2y_2 + x_1y_1 ),\\
z_{4,1}
&=(x_6y_3 + x_5y_2 + x_4y_1 + x_2y_6 + x_1y_5 )
-(x_6y_6 + x_5y_5 + x_4y_4 + x_3y_3 + x_2y_2 + x_1y_1 ),\\
z_{5,1}
&=( x_6y_4 + x_5y_3 + x_4y_2 + x_3y_1 + x_1y_6 )
-(x_6y_6 + x_5y_5 + x_4y_4 + x_3y_3 + x_2y_2 + x_1y_1 ),\\
z_{6,1}
&=(x_6y_5 + x_5y_4 + x_4y_3 + x_3y_2 + x_2y_1)
-(x_6y_6 + x_5y_5 + x_4y_4 + x_3y_3 + x_2y_2 + x_1y_1 ). 
\end{align*}

Moreover, for $d=1\in (\Z/7\Z)^\times$, 
we obtain the following relations 
which also guarantee that $h({\bf x})$ is multiplicative on $V$, i.e. 
$h({\bf x})h({\bf y})=h(\varphi_d({\bf x}, {\bf y}))$ for 
${\bf x},{\bf y} \in V=\{{\bf x}\in K^6\mid f_1({\bf x})=0\}$:
\begin{align*}
h(\varphi_d({\bf x},{\bf y}))&=
h({\bf x})h({\bf y})+f_1({\bf x})h({\bf y})+2f_1({\bf x})f_1({\bf y}),\\
f_1(\varphi_d({\bf x},{\bf y}))&=
h({\bf x})f_1({\bf y})-f_1({\bf x})h({\bf y}).
\end{align*}
\end{example}

\section{Proof of Theorem~\ref{thm:structureV}: The algebraic group $W$}\label{S6}

In this section we explain how the composition law constructed in Section~\ref{S5} gives rise, after restricting to a suitable dense open subset, to an algebraic $K$-group structure. 
Identifying $W$ with a norm-defined subgroup of the Weil restriction $R_{L/K}(\mathbb{G}_m)$ makes the operation $\overset{-1}{\ast}$ transparent and leads to the torus structure asserted in Theorem~\ref{thm:structureV}. 
Thus the torus structure records the group-theoretic content of the coefficient composition constructed in Section~\ref{S5}.
Once this description is in place, Corollary~\ref{cor:CI} follows from the resulting dimension argument.

Throughout this section, we retain the notation and assumptions of Theorem~\ref{mainth3}.
In particular, let $K$, $L=K(\zeta_l)$, $M$, and
\[
V=\{{\bf x}\in K^{l-1}\mid h_m({\bf x})=0 \ (m=1,\dots,e-1)\}
\]
where
\begin{align*}
h_m({\bf x})&=h_m(x_1,\ldots,x_{l-1})\\
&=\left(\sum_{i_0 + \beta i_1 + \cdots +\beta^{f-1}i_{f-1} \equiv \gamma^m \ (l)}x_{i_0}x_{i_1}\cdots x_{i_{f-1}}\right) - \left(\sum_{i_0 + \beta i_1 + \cdots +\beta^{f-1}i_{f-1} \equiv \gamma^{m+1} \ (l)}x_{i_0}x_{i_1}\cdots x_{i_{f-1}}\right)
\end{align*}
with subscripts taken modulo $l$ and with the convention $x_0=0$. 
For each $d\in(\Z/l\Z)^{\times}$, let
$
\varphi_d: V(K)\times V(K)\to V(K)
$
be the corresponding binary operation on $V(K)$, and define
\[
{\bf x}\overset d\ast {\bf y}:=\varphi_d({\bf x},{\bf y}).
\]
For ${\bf x}=(x_1,\dots,x_{l-1})\in V(K)$, we define
\[
\alpha_{\bf x}:=(-1)^{f-1}\sum_{i=1}^{l-1}x_i\zeta_l^{\,i}\in L,
\]
and
\[
W:=\{{\bf x}\in V\mid N_{L/K}(\alpha_{\bf x})\neq 0\}.
\]
The next proposition shows that $V(K)$ is a left unital groupoid with respect to $\overset{d}{\ast}$. 
In the special case $d=-1$, the operation becomes commutative and associative, which leads to the algebraic $K$-group structure on the open subset $W\subset V$ appearing in Theorem~\ref{thm:structureV} (cf. Hoshi and Kanai \cite[Section 3]{HK22}).

\begin{prop}\label{prop:CA}
For ${\bf x}, {\bf y} \in V(K)$ and $d,d_1,d_2\in (\Z/l\Z)^\times$,  the following hold: 
\begin{enumerate}
\item [$(1)$]  $({\bf 1}\overset{d}{\ast}{\bf x})_k=x_k$ and $({\bf x}\overset{d}{\ast}{\bf 1})_k=x_{-d^{-1}k}$ where ${\bf 1} := (-1)^{f}(1,1,\dots,1)$. 
The operation $\overset{d}{\ast}$ is left unital, i.e. ${\bf 1}\overset{d}{\ast}{\bf x}= {\bf x}$, and the operation $\overset{-1}{\ast}$ is unital, i.e. ${\bf 1}\overset{-1}{\ast}{\bf x}= {\bf x}\overset{-1}{\ast}{\bf 1}={\bf x}$.
\item [$(2)$] ${\bf x}\overset{d_1}{\ast}({\bf y}\overset{d_2}{\ast}{\bf z})=({\bf x}\overset{-d_2^{-1}d_1}{\ast}{\bf y})\overset{d_2}{\ast}{\bf z}$ where $d_1, d_2 \in (\Z/l\Z)^{\times}$. 
In particular, ${\bf x}\overset{d_1}{\ast}({\bf y}\overset{-1}{\ast}{\bf z})=({\bf x}\overset{d_1}{\ast}{\bf y})\overset{-1}{\ast}{\bf z}$.
\item [$(3)$] $({\bf y}\overset{d}{\ast}{\bf x})_k=({\bf x}\overset{d^{-1}}{\ast}{\bf y})_{-d^{-1}k}$. 
In particular, ${\bf y}\overset{d}{\ast}{\bf x}={\bf x}\overset{d}{\ast}{\bf y}$ if and only if $d=-1$.
\end{enumerate}
\end{prop}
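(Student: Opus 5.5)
The natural approach is to transport everything to the field $L=K(\zeta_l)$ through the injective $K$-linear map ${\bf x}\mapsto \alpha_{\bf x}=(-1)^{f-1}\sum_{i=1}^{l-1}x_i\zeta_l^i$. Injectivity holds because $\zeta_l,\dots,\zeta_l^{l-1}$ is a $K$-basis of $L$. By Theorem~\ref{thm:BF2}, in its reduced form with $a_0=b_0=0$, we have $\sum_k z_k\zeta_l^k=(-1)^{f-1}\sigma_{-d}\bigl(\sum_i x_i\zeta_l^i\bigr)\bigl(\sum_j y_j\zeta_l^j\bigr)$ for ${\bf z}={\bf x}\overset{d}{\ast}{\bf y}$. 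Multiplying by $(-1)^{f-1}$ yields the key dictionary
\[
\alpha_{{\bf x}\overset{d}{\ast}{\bf y}}=\sigma_{-d}(\alpha_{\bf x})\,\alpha_{\bf y}.
\]
Since this is a polynomial identity (Remark~\ref{rem:base_change}), it holds for all ${\bf x},{\bf y}\in K^{l-1}$, not only on $V$. Every claim then reduces to computing in $L$ and reading off coefficients in the basis $\zeta_l^k$.

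For (1), compute $\alpha_{\bf 1}=(-1)^{f-1}(-1)^f\sum_{i=1}^{l-1}\zeta_l^i=(-1)(-1)=1$. Hence $\alpha_{{\bf 1}\overset{d}{\ast}{\bf x}}=\sigma_{-d}(1)\alpha_{\bf x}=\alpha_{\bf x}$, which gives $({\bf 1}\overset{d}{\ast}{\bf x})_k=x_k$. Likewise $\alpha_{{\bf x}\overset{d}{\ast}{\bf 1}}=\sigma_{-d}(\alpha_{\bf x})=(-1)^{f-1}\sum_i x_i\zeta_l^{-di}=(-1)^{f-1}\sum_k x_{-d^{-1}k}\zeta_l^k$, which gives the stated coordinate formula. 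For $d=-1$ we have $-d^{-1}=1$, so ${\bf 1}$ is a two-sided unit. One should also check ${\bf 1}\in V$. This holds because $h_m$ depends only on $\alpha$ through $N_{L/M}$ (Step 1 of Lemma~\ref{lem:h_regularity}), and $N_{L/M}(1)=1\in K$. Equivalently, $V=\{{\bf x}: N_{L/M}(\alpha_{\bf x})\in K\}$, which follows from the expansion in the proof of Lemma~\ref{lem:BEW2}. The sign $(-1)^{f-1}$ in $\alpha_{\bf x}$ is harmless here, since $N_{L/M}$ multiplies it by $(-1)^{f(f-1)}=1$.

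For (2), compute in $L$ using that $\sigma$ is multiplicative and $\sigma_a\sigma_b=\sigma_{ab}$:
\[
\alpha_{{\bf x}\overset{d_1}{\ast}({\bf y}\overset{d_2}{\ast}{\bf z})}=\sigma_{-d_1}(\alpha_{\bf x})\,\sigma_{-d_2}(\alpha_{\bf y})\,\alpha_{\bf z},\qquad
\alpha_{({\bf x}\overset{c}{\ast}{\bf y})\overset{d_2}{\ast}{\bf z}}=\sigma_{-d_2}\sigma_{-c}(\alpha_{\bf x})\,\sigma_{-d_2}(\alpha_{\bf y})\,\alpha_{\bf z}.
\]
These agree when $d_2c=-d_1$, that is, $c=-d_2^{-1}d_1$. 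Injectivity of ${\bf x}\mapsto\alpha_{\bf x}$ then gives the identity. Setting $d_2=-1$ gives $c=d_1$, which is the special case stated.

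For (3), $\alpha_{{\bf y}\overset{d}{\ast}{\bf x}}=\sigma_{-d}(\alpha_{\bf y})\alpha_{\bf x}=\sigma_{-d}\bigl(\alpha_{\bf y}\,\sigma_{-d^{-1}}(\alpha_{\bf x})\bigr)=\sigma_{-d}(\alpha_{{\bf x}\overset{d^{-1}}{\ast}{\bf y}})$, using $\sigma_{-d}\sigma_{-d^{-1}}=\sigma_1=\mathrm{id}$. Applying $\sigma_{-d}$ to $\sum_k w_k\zeta_l^k$ gives coefficient $w_{-d^{-1}k}$ at $\zeta_l^k$, which is the stated formula. For the equivalence: if $d=-1$, then $\sigma_{-d}=\mathrm{id}$ and commutativity follows from commutativity of $L$. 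Conversely, if $d\neq-1$, we need points ${\bf x},{\bf y}\in V(K)$ with $\sigma_{-d}(\alpha_{\bf x})\alpha_{\bf y}\ne\sigma_{-d}(\alpha_{\bf y})\alpha_{\bf x}$. Taking ${\bf y}={\bf 1}$, the condition becomes $\sigma_{-d}(\alpha_{\bf x})\neq\alpha_{\bf x}$, so we need a point of $V(K)$ with $\alpha_{\bf x}$ not fixed by $\sigma_{-d}\ne\mathrm{id}$. This is the main obstacle, because $V$ is cut out by nonlinear equations and its $K$-points must be produced explicitly. First try $\alpha=\zeta_l$, for which $N_{L/M}(\zeta_l)=\zeta_l^{1+\beta+\cdots+\beta^{f-1}}=\zeta_l^0=1$, since $\beta\ne1$ has order $f$ and so $\sum\beta^j\equiv0 \pmod l$. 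The vector with $\alpha_{\bf x}=\zeta_l$ is $(-1)^{f-1}$ times the first standard basis vector. It lies in $V(K)$, and $\sigma_{-d}(\zeta_l)=\zeta_l^{-d}\ne\zeta_l$ exactly when $d\ne-1$, which settles the converse.
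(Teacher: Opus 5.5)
Your proof is correct, and it takes a genuinely different route from the paper.

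\textbf{The paper's route.} The paper proves all three parts by direct manipulation of the coordinate formula for $\varphi_d$. For (1) it splits off the $i=k$ term using $y_0=0$. For (2) and (3) it expands both sides as double sums and reindexes (e.g.\ $j'=i+j$, $i'=-d_2^{-1}i$) until they match.

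\textbf{Your route.} You transport everything into $L$ through the injective $K$-linear map ${\bf x}\mapsto\alpha_{\bf x}$. Everything then rests on the single identity $\alpha_{{\bf x}\overset{d}{\ast}{\bf y}}=\sigma_{-d}(\alpha_{\bf x})\,\alpha_{\bf y}$, which the paper only uses later, in the proof of Proposition~\ref{prop:inverse}. With this dictionary:
\begin{itemize}
\item (1) reduces to $\alpha_{\bf 1}=1$;
\item (2) reduces to the relation $\sigma_{-d_2}\sigma_{-c}=\sigma_{-d_1}$ in $\Gal(L/K)$, where $c=-d_2^{-1}d_1$;
\item (3) reduces to $\sigma_{-d}\sigma_{-d^{-1}}=\mathrm{id}$.
\end{itemize}
So the special role of $d=-1$ (namely $\sigma_{-d}=\mathrm{id}$) is explained rather than merely observed.

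\textbf{What your version adds.} It settles two points the paper leaves implicit.
\begin{itemize}
\item That ${\bf 1}\in V(K)$, via the description $V=\{{\bf x}\mid N_{L/M}(\alpha_{\bf x})\in K\}$.
\item The ``only if'' half of (3). Your witness $\alpha_{\bf x}=\zeta_l$, ${\bf y}={\bf 1}$, justified by $1+\beta+\cdots+\beta^{f-1}\equiv 0 \pmod{l}$, is exactly what is needed. The paper's argument for (3) only establishes the coordinate formula.
\end{itemize}
You also never have to track the boundary terms that the convention $x_0=y_0=z_0=0$ produces when a summation range $1\le j\le l-1$ is shifted. That is the delicate point in the paper's displayed computation for (2).

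\textbf{What the coordinate approach offers in return.} It works purely with the explicit polynomial formulas and makes no use of the $K$-basis $\zeta_l,\dots,\zeta_l^{l-1}$ of $L$. Under the standing hypothesis $\Gal(K(\zeta_l)/K)\simeq(\Z/l\Z)^{\times}$ this costs you nothing. Moreover, the formal part of your argument (the dictionary and parts (1)--(3)) would go through unchanged in the ring $K[t]/(1+t+\cdots+t^{l-1})$.
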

\begin{proof}
(1) By the convention $y_0=0$, we have
\begin{align*}
({\bf x}\overset{d}{\ast}{\bf y})_k &= (-1)^{f-1}\left( \sum_{i=1}^{l-1}x_{-d^{-1}i}y_{k-i} - \sum_{i=1}^{l-1}x_{-d^{-1}i}y_{-i} \right)\\
&=(-1)^{f-1}\left( x_{-d^{-1}k}y_0 + \sum_{\substack{i=1\\i\neq k}}^{l-1}x_{-d^{-1}i}y_{k-i} - \sum_{i=1}^{l-1}x_{-d^{-1}i}y_{-i} \right)\\
&=(-1)^{f-1}\left( \sum_{\substack{i=1\\i\neq k}}^{l-1}x_{-d^{-1}i}y_{k-i} - \sum_{i=1}^{l-1}x_{-d^{-1}i}y_{-i} \right).
\end{align*}
Hence we have
\begin{align*}
({\bf x}\overset{d}{\ast}{\bf 1})_k &=(-1)^{f-1}\left( \sum_{\substack{i=1\\i\neq k}}^{l-1}x_{-d^{-1}i}\cdot (-1)^{f} - \sum_{i=1}^{l-1}x_{-d^{-1}i}\cdot (-1)^{f} \right)\\
&=(-1)^{(f-1)(f+1)}x_{-d^{-1}k}\\
&=x_{-d^{-1}k}
\end{align*}
and
\begin{align*}
({\bf 1}\overset{d}{\ast}{\bf x})_k &=(-1)^{f-1}\left( \sum_{\substack{i=1\\i\neq k}}^{l-1}(-1)^{f}\cdot x_{k-i} - \sum_{i=1}^{l-1}(-1)^{f}\cdot x_{-i} \right)\\
&=x_{k}.
\end{align*}

(2) We calculate the left-hand side and the right-hand side separately. 
Set ${\bf v}:={\bf y}\overset{d_2}{\ast}{\bf z}$ and ${\bf w}:={\bf x}\overset{-d_2^{-1}d_1}{\ast}{\bf y}$. 
\begin{align*}
(({\bf x}\overset{-d_2^{-1}d_1}{\ast}{\bf y})\overset{d_2}{\ast}{\bf z})_k&=({\bf w}\overset{d_2}{\ast}{\bf z})_k\\
&=(-1)^{f-1}\left( \sum_{j=1}^{l-1}w_{-d_2^{-1}j}z_{k-j} - \sum_{j=1}^{l-1}w_{-d_2^{-1}j}z_{-j} \right)\\
&=(-1)^{f-1}\left( \sum_{j=1}^{l-1}(-1)^{f-1}\left( \sum_{i=1}^{l-1}x_{d_2d_1^{-1}i}y_{(-d_2^{-1}j)-i} - \sum_{i=1}^{l-1}x_{d_2d_1^{-1}i}y_{-i} \right)z_{k-j}\right.\\
& \left. - \sum_{j=1}^{l-1}(-1)^{f-1}\left( \sum_{i=1}^{l-1}x_{d_2d_1^{-1}i}y_{(-d_2^{-1}j)-i} - \sum_{i=1}^{l-1}x_{d_2d_1^{-1}i}y_{-i} \right)z_{-j} \right)\\
&=\sum_{i=1}^{l-1}\sum_{j=1}^{l-1}x_{d_2d_1^{-1}i}y_{-d_2^{-1}j-i}z_{k-j} - \sum_{i=1}^{l-1}\sum_{j=1}^{l-1}x_{d_2d_1^{-1}i}y_{-i}z_{k-j} \\
&- \sum_{i=1}^{l-1}\sum_{j=1}^{l-1}x_{d_2d_1^{-1}i}y_{-d_2^{-1}j-i}z_{-j} + \sum_{i=1}^{l-1}\sum_{j=1}^{l-1}x_{d_2d_1^{-1}i}y_{-i}z_{-j}\\
&=\sum_{i=1}^{l-1}\sum_{j=1}^{l-1}x_{d_2d_1^{-1}i}y_{-d_2^{-1}j-i}z_{k-j} - \sum_{i=1}^{l-1}\sum_{j=1}^{l-1}x_{d_2d_1^{-1}i}y_{-d_2^{-1}j-i}z_{-j}.
\end{align*}
On the other hand, by setting $j^\prime :=i+j$ and $i^\prime:=-d_2^{-1}i$, we have
\begin{align*}
({\bf x}\overset{d_1}{\ast}({\bf y}\overset{d_2}{\ast}{\bf z}))_k&=({\bf x}\overset{d_1}{\ast}{\bf v})_k\\
&=(-1)^{f-1}\left( \sum_{i=1}^{l-1}x_{-d_1^{-1}i}v_{k-i} - \sum_{i=1}^{l-1}x_{-d_1^{-1}i}v_{-i} \right)\\
&=(-1)^{f-1} \left( \sum_{i=1}^{l-1}x_{-d_1^{-1}i}(-1)^{f-1}\left( \sum_{j=1}^{l-1}y_{-d_2^{-1}j}z_{(k-i)-j} - \sum_{j=1}^{l-1}y_{-d_2^{-1}j}z_{-j} \right) \right.\\
&- \left. \sum_{j=1}^{l-1}x_{-d^{-1}j}(-1)^{f-1} \left( \sum_{j=1}^{l-1}y_{-d_2^{-1}j}z_{(-i)-j} - \sum_{j=1}^{l-1}y_{-d_2^{-1}j}z_{-j} \right) \right)\\
&=\sum_{i=1}^{l-1}\sum_{j=1}^{l-1}x_{-d_1^{-1}i}y_{-d_2^{-1}j}z_{k-(i+j)} - \sum_{i=1}^{l-1}\sum_{j=1}^{l-1}x_{-d_1^{-1}i}y_{-d_2^{-1}j}z_{-j} \\
&- \sum_{i=1}^{l-1}\sum_{j=1}^{l-1}x_{d_1^{-1}i} y_{-d_2^{-1}j}z_{-(i+j)} + \sum_{i=1}^{l-1}\sum_{j=1}^{l-1}x_{-d_1^{-1}i}y_{-d_2^{-1}j}z_{-j}\\
&=\sum_{i=1}^{l-1}\sum_{j=1}^{l-1}x_{-d_1^{-1}i}y_{-d_2^{-1}j}z_{k-(i+j)} - \sum_{i=1}^{l-1}\sum_{j=1}^{l-1}x_{-d_1^{-1}i} y_{-d_2^{-1}j}z_{-(i+j)}\\
&=\sum_{i=1}^{l-1}\sum_{j^\prime=1}^{l-1}x_{-d_1i}y_{-d_2^{-1}(j^\prime-i)}z_{k-j^\prime} - \sum_{i=1}^{l-1}\sum_{j=1}^{l-1}x_{-d_1^{-1}i} y_{-d_2^{-1}(j^\prime-i)}z_{-j^\prime}\\
&=\sum_{i^\prime=1}^{l-1}\sum_{j^\prime=1}^{l-1}x_{d_2d_1^{-1}i^\prime}y_{-d_2^{-1}j^\prime-i^\prime}z_{k-j^\prime} - \sum_{i^\prime=1}^{l-1}\sum_{j=1}^{l-1}x_{d_2d_1^{-1}i^\prime} y_{-d_2^{-1}j^\prime + i^\prime}z_{-j^\prime}\\
&=(({\bf x}\overset{-d_2^{-1}d_1}{\ast}{\bf y})\overset{d_2}{\ast}{\bf z})_k.
\end{align*}

(3) By substituting $i:=-d^{-1}(k-i^\prime)$ and $j:=d^{-1}j^\prime$, we have
\begin{align*}
({\bf x}\overset{d^{-1}}{\ast}{\bf y})_{-d^{-1}k}&=
 (-1)^{f-1}\left( \sum_{i=1}^{l-1}x_{-di}y_{-d^{-1}k-i} - \sum_{j=1}^{l-1}x_{-dj}y_{-j} \right)\\
 &= (-1)^{f-1}\left( \sum_{i^\prime=1}^{l-1}x_{k-i^\prime}y_{-d^{-1}i^\prime} - \sum_{j^\prime=1}^{l-1}x_{-j^\prime}y_{-d^{-1}j^\prime} \right)\\
 &= (-1)^{f-1}\left( \sum_{i^\prime=1}^{l-1}y_{-d^{-1}i^\prime}x_{k-i^\prime} - \sum_{j^\prime=1}^{l-1}y_{-d^{-1}j^\prime}x_{-j^\prime} \right)\\
 &= ({\bf y}\overset{d}{\ast}{\bf x})_{k}.
\end{align*}
\end{proof}

\begin{remark}
We notice that the operation $\overset{d}{\ast}$ on $V(K)$ is parallel to the $d$-composition $\overset{d}{\ast}$ of matrices in \cite[Section 3]{HK22}. 
\end{remark}

\begin{example}
The multiplication $\overset{-1}{\ast}$ is given by 
\[
\overset{-1}{\ast} : V \times V \to V\ ; \ ( x_1 , \dots , x_{l-1}, y_1 , \dots , y_{l-1} ) \mapsto (z_1 , \dots , z_{l-1})
\]
where $z_{k} := z_{k,-1} = (-1)^{f-1}\left(\sum_{i=1}^{l-1}x_{i}y_{k-i} - \sum_{i=1}^{l-1}x_{i}y_{-i}\right)$.
When $l=3$, we have
\begin{align*}
z_{1}
=(-1)^{f-1} (x_2y_2 - (x_1y_2 + x_2y_1)),\\
z_{2}
=(-1)^{f-1} (x_1y_1 - (x_1y_2 + x_2y_1)).
\end{align*}
When $l=5$, we have
\begin{align*}
z_{1}
&=(-1)^{f-1}( x_2y_4 + x_3y_3 + x_4y_2
-(x_1y_4 + x_2y_3 + x_3y_2 + x_4y_1 )),\\
z_{2}
&=(-1)^{f-1}(x_1y_1 + x_3y_4 + x_4y_3
-(x_1y_4 + x_2y_3 + x_3y_2 + x_4y_1 )),\\
z_{3}
&=(-1)^{f-1}(x_1y_2 + x_2y_1 + x_4y_4
-(x_1y_4 + x_2y_3 + x_3y_2 + x_4y_1 )),\\
z_{4}
&=(-1)^{f-1}(x_1y_3 + x_2y_2 + x_3y_1
-(x_1y_4 + x_2y_3 + x_3y_2 + x_4y_1 )).
\end{align*}
When $l=7$, we have
\begin{align*}
z_{1}
&=(-1)^{f-1}( x_2y_6 + x_3y_5 + x_4y_4 + x_5y_3 + x_6y_2
-(x_1y_6 + x_2y_5 + x_3y_4 + x_4y_3 + x_5y_2 + x_6y_1 )),\\
z_{2}
&=(-1)^{f-1}( x_1y_1 +  x_3y_6 + x_4y_5 + x_5y_4 + x_6y_3
-(x_1y_6 + x_2y_5 + x_3y_4 + x_4y_3 + x_5y_2 + x_6y_1 )),\\
z_{3}
&=(-1)^{f-1}( x_1y_2 + x_2y_1 +  x_4y_6 + x_5y_5 + x_6y_4
-(x_1y_6 + x_2y_5 + x_3y_4 + x_4y_3 + x_5y_2 + x_6y_1 )),\\
z_{4}
&=(-1)^{f-1}( x_1y_3 + x_2y_2 + x_3y_1 + x_5y_6 + x_6y_5
-(x_1y_6 + x_2y_5 + x_3y_4 + x_4y_3 + x_5y_2 + x_6y_1 )),\\
z_{5}
&=(-1)^{f-1}( x_1y_4 + x_2y_3 + x_3y_2 + x_4y_1 + x_6y_6
-(x_1y_6 + x_2y_5 + x_3y_4 + x_4y_3 + x_5y_2 + x_6y_1 )),\\
z_{6}
&=(-1)^{f-1}( x_1y_5 + x_2y_4 + x_3y_3 + x_4y_2 + x_5y_1 
-(x_1y_6 + x_2y_5 + x_3y_4 + x_4y_3 + x_5y_2 + x_6y_1 )).\\ 
\end{align*}
\end{example}

Next, we relate the representation matrix $M_d$ to the field norm and derive an explicit formula for the inverse element.
In what follows, when matrix notation is used, we regard vectors in $K^{l-1}$ as row vectors.

\begin{lem}\label{lem:det-Md}
Let $l$ be an odd prime, let $d\in (\Z/l\Z)^\times$, and let ${\bf x}=(x_1,\dots,x_{l-1})\in K^{l-1}$.
Let $M_d$ be the $(l-1) \times (l-1)$ matrix with entries
\[
M_d[k,j]:=(-1)^{f-1}(x_{d^{-1}(k-j)}-x_{d^{-1}k}),\qquad 1\le k,j\le l-1
\]
where the indices are taken modulo $l$ with the convention $x_0=0$.
Then
\[
\det M_d = N_{L/K}(\alpha_{\bf x}).
\]
\end{lem}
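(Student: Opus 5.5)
The plan is to recognize $M_d$, up to transposition and a relabelling of indices, as the matrix of a multiplication map $L\to L$ written in the $K$-basis $\zeta_l,\zeta_l^2,\dots,\zeta_l^{l-1}$. The determinant of multiplication by an element $g\in L$ is its norm $N_{L/K}(g)$, and that norm is invariant under $\Gal(L/K)$, which will give the identity.

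\textbf{Step 1 (matrix of a multiplication map).} Fix $g=\sum_{i=1}^{l-1}g_i\zeta_l^i\in L$, with the convention $g_0=0$ and indices modulo $l$. For $y=\sum_{j=1}^{l-1}y_j\zeta_l^j$ I would expand $gy$ exactly as in the reduced case of Theorem~\ref{thm:BF}. The constant term is eliminated using $1=-(\zeta_l+\cdots+\zeta_l^{l-1})$. This gives
\[
gy=\sum_{k=1}^{l-1}\Bigl(\sum_{j=1}^{l-1}(g_{k-j}-g_{-j})\,y_j\Bigr)\zeta_l^k .
\]
So the matrix of $m_g\colon y\mapsto gy$ in this basis is $A_g[k,j]=g_{k-j}-g_{-j}$ for $1\le k,j\le l-1$. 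The hypothesis $\Gal(L/K)\simeq(\Z/l\Z)^\times$ makes $\zeta_l,\dots,\zeta_l^{l-1}$ a $K$-basis of $L$. Hence $\det A_g=\det(m_g)=N_{L/K}(g)$, by the same identity $N=\det(m_x)$ used in the proof of Proposition~\ref{prop:norm_polarization}, now applied to $L/K$.

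\textbf{Step 2 (matching $M_d$).} Take $g_i:=(-1)^{f-1}x_{d^{-1}i}$. Since $\zeta_l^i$ has coefficient $x_{d^{-1}i}$ exactly when it comes from $x_{d^{-1}i}\zeta_l^{\,d\cdot d^{-1}i}$, this means $g=(-1)^{f-1}\sigma_d\bigl(\sum_i x_i\zeta_l^i\bigr)=\sigma_d(\alpha_{\bf x})$. Then
\[
A_g[-j,-k]=g_{k-j}-g_{k}=(-1)^{f-1}\bigl(x_{d^{-1}(k-j)}-x_{d^{-1}k}\bigr)=M_d[k,j].
\]
Thus $M_d=P A_g^{T}P$, where $P$ is the permutation matrix of the involution $k\mapsto -k$ on $\{1,\dots,l-1\}$. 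Since $P^2=I$, this gives $\det M_d=\det A_g$.

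\textbf{Step 3 (conclusion).} Combining the two steps,
\[
\det M_d=N_{L/K}(\sigma_d(\alpha_{\bf x}))=N_{L/K}(\alpha_{\bf x}),
\]
because the norm is a product over all of $\Gal(L/K)$ and $\sigma_d\in\Gal(L/K)$. The sign $(-1)^{f-1}$ needs no separate treatment, since it is already built into $\alpha_{\bf x}$. It would in any case drop out of the norm, because $[L:K]=l-1$ is even.

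The only delicate point is the index bookkeeping in Step 2: checking that the convention $x_0=0$, the reduction of the constant term, and the permutation $k\mapsto-k$ line up exactly with the stated entries of $M_d$. I would verify this on $l=3$ or $l=5$ before writing it in general. The rest is standard linear algebra.
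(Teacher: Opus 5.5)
Your proof is correct, and it takes a different route from the paper's.

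\textbf{The paper's route.} The paper enlarges $M_d$ to an $l\times l$ matrix $\widehat M$, indexed by $0,\dots,l-1$, whose $0$-th column vanishes. This gives $\det(tI_l-\widehat M)=t\det(tI_{l-1}-M_d)$. It then computes the spectrum directly: the character vectors ${\bf v}_a=(1,\zeta_l^a,\dots,\zeta_l^{(l-1)a})$ satisfy ${\bf v}_a\widehat M=\lambda_a({\bf v}_a-{\bf v}_0)$ with $\lambda_a=(-1)^{f-1}\sum_i x_i\zeta_l^{adi}$. This triangularizes $\widehat M$ and shows the eigenvalues of $M_d$ are the conjugates of $\alpha_{\bf x}$.

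\textbf{Your route.} You recognize $M_d$ as the matrix of a multiplication map $m_g$ on $L$ in the $K$-basis $\zeta_l,\dots,\zeta_l^{l-1}$. You then use $\det m_g=N_{L/K}(g)$ and the Galois invariance of the norm. Your index computations are all correct: $A_g[k,j]=g_{k-j}-g_{-j}$, $g=\sigma_d(\alpha_{\bf x})$, and $M_d=PA_g^{T}P$.

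\textbf{What each buys.} Your argument is shorter and avoids eigenvector bookkeeping. It also explains structurally why $M_d$ represents ${\bf y}\mapsto{\bf x}\overset{d}{\ast}{\bf y}$ in Proposition~\ref{prop:inverse}. In fact the permutation $P$ is unnecessary: one checks $M_d=A_{g'}^{T}$ directly for $g'=\sigma_{-d}(\alpha_{\bf x})$. In the row-vector convention this is exactly the identity $\alpha_{{\bf x}\overset{d}{\ast}{\bf y}}=\sigma_{-d}(\alpha_{\bf x})\,\alpha_{\bf y}$ used there.

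The paper's eigenvector computation needs only that $\zeta_l$ is a primitive $l$-th root of unity for the triangularization. The hypothesis $\Gal(L/K)\simeq(\Z/l\Z)^\times$ enters only when $\prod_a\lambda_a$ is identified with the norm. Your write-up uses $[L:K]=l-1$ from the outset, so that $\zeta_l,\dots,\zeta_l^{l-1}$ is a $K$-basis. That is a standing assumption here, so nothing is lost.
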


\begin{proof}
We introduce an auxiliary $l \times l$ matrix $\\widehat{M}$ indexed by $0 \le k, j \le l-1$, defined by the same formula as $M_d$.
By definition (with indices taken modulo l and the convention $x_0 = 0$),
the $0$-th column of $\widehat{M}$ is identically zero: for every $k$ we have
\[
\widehat{M}[k,0]
= (-1)^{f-1}(x_{d^{-1}(k-0)}-x_{d^{-1}k})
= (-1)^{f-1}(x_{d^{-1}k}-x_{d^{-1}k})=0.
\]
Let $U=\{v\in \overline{K}^{l}\mid v_0=0\}$. Then $U$ is $\widehat M$-stable, and the induced endomorphism on the quotient $\overline{K}^{l}/U$ is zero. 
Moreover, the restriction $\widehat M|_U$ is represented by the $(l-1)\times(l-1)$ minor $M_d$. 
Hence
\[
\det(tI_{l}-\widehat M)=t\,\det(tI_{l-1}-M_d),
\]
so the eigenvalues of $\widehat M$ consist of $0$ together with the eigenvalues of $M_d$ (with multiplicity).
Let $\zeta_l$ be a primitive $l$-th root of unity. 
For each $a \in \{0, \dots, l-1\}$, consider the vector ${\bf v}_a = (1, \zeta_l^a, \zeta_l^{2a}, \dots, \zeta_l^{(l-1)a})$ .
A direct computation yields
\[
{\bf v}_a\widehat{M} = \lambda_a ({\bf v}_a - {\bf v}_0) \quad \text{where } \lambda_a = (-1)^{f-1} \sum_{i=1}^{l-1} x_i \zeta_l^{adi}.
\]
Using the basis $\{{\bf v}_0, {\bf v}_1, \dots, {\bf v}_{l-1}\}$, the matrix representation of $\widehat{M}$ is upper triangular, and its non-zero eigenvalues are precisely $\lambda_1, \dots, \lambda_{l-1}$.
Therefore,
\[
\det M_d = \prod_{a=1}^{l-1} \lambda_a = (-1)^{(f-1)(l-1)} \prod_{a=1}^{l-1} \left( \sum_{i=1}^{l-1} x_i \zeta_l^{adi} \right).
\]
Since $l$ is an odd prime, $l-1$ is even, so the sign factor $(-1)^{(f-1)(l-1)}$ reduces to $1$.
Furthermore, since $d$ is coprime to $l$, the map $a \mapsto ad \pmod l$ is a permutation of $(\Z/l\Z)^\times$. 
Thus, the product runs over all Galois conjugates of $\alpha_{\bf x}$, yielding the norm $N_{L/K}(\alpha_{\bf x})$.
\end{proof}

Using the determinant formula, we obtain the explicit form of the inverse element.

\begin{prop}\label{prop:inverse}
Fix $d\in (\Z/l\Z)^\times$. Let ${\bf x}\in V(K)$ and assume that $N_{L/K}(\alpha_{\bf x})\neq 0$.
Then there exists a unique element ${\bf x}^{-1}\in V(K)$ such that
\[
{\bf x}\overset{d}{\ast}{\bf x}^{-1}={\bf 1},\qquad {\bf 1}:=(-1)^f(1,\dots,1),
\]
and it is given by
\[
{\bf x}^{-1}={\bf 1}\,\frac{\widetilde{M_d}}{N_{L/K}(\alpha_{\bf x})},
\]
where $\widetilde{M_d}$ denotes the adjugate matrix of $M_d$.
\end{prop}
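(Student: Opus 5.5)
The plan is to rewrite the operation ${\bf x}\overset{d}{\ast}{\bf y}$, for fixed ${\bf x}$, as a linear map in ${\bf y}$ whose matrix is exactly the $M_d$ of Lemma~\ref{lem:det-Md}. Starting from the formula in Theorem~\ref{mainth3}, I substitute $j=k-i$ in the first sum and $j=-i$ in the second. With indices modulo $l$ and $x_0=y_0=0$, this gives
\[
({\bf x}\overset{d}{\ast}{\bf y})_k=(-1)^{f-1}\sum_{j=1}^{l-1}y_j\bigl(x_{d^{-1}(j-k)}-x_{d^{-1}j}\bigr)=\sum_{j=1}^{l-1}y_j\,M_d[j,k]
\]
after reindexing. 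In row-vector notation this reads ${\bf x}\overset{d}{\ast}{\bf y}={\bf y}M_d$. I will check the sign and index bookkeeping carefully against the definition of $M_d$; if the roles of $k$ and $j$ come out swapped, I replace $M_d$ by its transpose, which changes neither the determinant nor the adjugate argument.

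With this, the equation ${\bf x}\overset{d}{\ast}{\bf y}={\bf 1}$ is the linear system ${\bf y}M_d={\bf 1}$ over $K$. By Lemma~\ref{lem:det-Md}, $\det M_d=N_{L/K}(\alpha_{\bf x})\neq0$, so $M_d$ is invertible. The system therefore has exactly one solution in $K^{l-1}$, namely
\[
{\bf y}={\bf 1}M_d^{-1}={\bf 1}\,\widetilde{M_d}/N_{L/K}(\alpha_{\bf x}).
\]
This gives both the explicit formula and uniqueness, even among all vectors in $K^{l-1}$.

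It remains to show that this ${\bf y}$ lies in $V(K)$, and this is the step I expect to be the real work. I would first establish the characterization
\[
{\bf z}\in V \iff N_{L/M}(\alpha_{\bf z})\in K.
\]
This follows from the expansion \eqref{eq:norm_period_expansion},
\[
N_{L/M}(\alpha_{\bf z})=\sum_{r=0}^{e-1}\bigl(S_{r+1}({\bf z})-S_0({\bf z})\bigr)\eta(r),
\]
together with the fact that $\{\eta(0),\dots,\eta(e-1)\}$ is a normal basis of $M/K$. An element of $M$ lies in $K$ exactly when all of its coordinates in this basis coincide, since $K$ is spanned by $1=-\sum_r\eta(r)$. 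So the condition is $S_1=\cdots=S_e$, which is the defining condition $h_m({\bf z})=0$ for all $m$. The sign $(-1)^{f-1}$ in $\alpha_{\bf z}$ only multiplies the norm by $(-1)^{f(f-1)}=1$.

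Next, the computation in Theorem~\ref{thm:BF2} (in its reduced form) gives $\alpha_{{\bf x}\overset{d}{\ast}{\bf y}}=\pm\,\sigma_{-d}(\alpha_{\bf x})\alpha_{\bf y}$. Also $\alpha_{\bf 1}$ is a nonzero element of $K$ (up to sign it equals $\sum_{i\ge1}\zeta_l^i=-1$). Hence the identity ${\bf x}\overset{d}{\ast}{\bf y}={\bf 1}$ gives $\alpha_{\bf y}=c\,\sigma_{-d}(\alpha_{\bf x})^{-1}$ for some $c\in K^\times$; note $\alpha_{\bf x}\ne0$ because its norm is nonzero. Taking $N_{L/M}$, and using that $\Gal(L/K)$ is abelian so $\sigma_{-d}$ commutes with $N_{L/M}$, I get
\[
N_{L/M}(\alpha_{\bf y})=c^f\,\sigma_{-d}\bigl(N_{L/M}(\alpha_{\bf x})\bigr)^{-1}.
\]
Since ${\bf x}\in V$, the characterization gives $N_{L/M}(\alpha_{\bf x})\in K^\times$, which $\sigma_{-d}$ fixes. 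So $N_{L/M}(\alpha_{\bf y})\in K$, and the characterization again gives ${\bf y}\in V(K)$.
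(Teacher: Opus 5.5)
Your proposal is correct and follows essentially the same route as the paper. In both, ${\bf y}\mapsto{\bf x}\overset{d}{\ast}{\bf y}$ is identified with ${\bf y}\mapsto{\bf y}M_d$, which is inverted using $\det M_d=N_{L/K}(\alpha_{\bf x})\neq0$, and membership in $V$ is checked via $\alpha_{{\bf x}\overset{d}{\ast}{\bf y}}=\sigma_{-d}(\alpha_{\bf x})\alpha_{\bf y}$ and the criterion ${\bf z}\in V\iff N_{L/M}(\alpha_{\bf z})\in K$, which you justify more explicitly than the paper does. Your reindexing really does give ${\bf y}M_d$ and not ${\bf y}M_d^{T}$, so your fallback of transposing is never used. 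It would not have been harmless, though: with a transpose the explicit formula would become ${\bf 1}\,\widetilde{M_d}^{T}/N_{L/K}(\alpha_{\bf x})$ rather than the stated one.
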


\begin{proof}
The map ${\bf y}\mapsto{\bf x}\overset{d}{\ast}{\bf y}$ is $K$-linear and, in the row-vector convention above, is represented by the matrix $M_d$; hence
${\bf x}\overset{d}{\ast}{\bf x}^{-1}={\bf 1}$ is equivalent to ${\bf x}^{-1}M_d ={\bf 1}$.
By Lemma~\ref{lem:det-Md}, $\det M_d=N_{L/K}(\alpha_{\bf x})\neq 0$, so $M_d$ is invertible and
\[
{\bf x}^{-1}={\bf 1}M_d^{-1}={\bf 1}\,\frac{\widetilde{M_d}}{\det M_d}
={\bf 1}\,\frac{\widetilde{M_d}}{N_{L/K}(\alpha_{\bf x})}.
\]

To verify ${\bf x}^{-1}\in V(K)$, use the identity
\[
\alpha_{{\bf x}\overset{d}{\ast}{\bf y}}=\sigma_{-d}(\alpha_{\bf x})\,\alpha_{\bf y}
\]
where $\alpha_{\bf x}:=(-1)^{f-1}\sum_{i=1}^{l-1}x_i\zeta_l^{\,i},\ 
\alpha_{\bf y}:=(-1)^{f-1}\sum_{i=1}^{l-1}y_i\zeta_l^{\,i}$.
This follows by a direct calculation from the definition of ${\bf x}\overset{d}{\ast}{\bf y}:=\varphi_{d}({\bf x},{\bf y})$.
By substituting ${\bf y}={\bf x}^{-1}$ and ${\bf x}\overset{d}{\ast}{\bf x}^{-1}={\bf 1}$, we get
$\alpha_{\bf 1}=\sigma_{-d}(\alpha_{\bf x})\alpha_{{\bf x}^{-1}}$.
Since $\alpha_{\bf 1}=1$, we have $\alpha_{{\bf x}^{-1}}=\sigma_{-d}(\alpha_{\bf x})^{-1}$.
Therefore,
\[
N_{L/M}(\alpha_{{\bf x}^{-1}})
= N_{L/M}\bigl(\sigma_{-d}(\alpha_{\bf x})\bigr)^{-1}
= \sigma_{-d}\bigl(N_{L/M}(\alpha_{\bf x})\bigr)^{-1}\in K^\times,
\]
because ${\bf x}\in V(K)$ implies $N_{L/M}(\alpha_{\bf x})\in K^\times$ and $\sigma_{-d}$ acts trivially on $K$.
Hence ${\bf x}^{-1}\in V(K)$.
\end{proof}

\begin{example}[Case $l=5$ and $f=4$]
Consider the case where $l=5$ and $f=4$. 
We have $L=K(\zeta_5)$ and $(-1)^{f-1}=-1$. 
Let $d=1$.
For ${\bf x} = (x_1, x_2, x_3, x_4)$, the matrix $M_1$ is given by
\[
M_1=
\begin{pmatrix}
x_1 & x_1-x_4 & x_1-x_3 & x_1-x_2\\
x_2-x_1 & x_2 & x_2-x_4 & x_2-x_3\\
x_3-x_2 & x_3-x_1 & x_3 & x_3-x_4\\
x_4-x_3 & x_4-x_2 & x_4-x_1 & x_4
\end{pmatrix}
\]
By Lemma~\ref{lem:det-Md}, the determinant is simply the norm:
\[
    \det M_1 = N_{L/K}(x_1\zeta_5 + x_2\zeta_5^2 + x_3\zeta_5^3 + x_4\zeta_5^4).
\]
The inverse vector ${\bf x}^{-1}$ is the solution to ${\bf x}^{-1} M_1  = (1,1,1,1)$.
\end{example}

Note that the multiplication map $({\bf x},{\bf y})\mapsto {\bf x}\overset{-1}{\ast}{\bf y}$ is given by polynomials in the coordinates, hence is a regular morphism on $V\times V$. 
The only possible denominators occur in the inverse map, via the factor $N_{L/K}(\alpha_{\bf x})^{-1}$ in Proposition~\ref{prop:inverse}. 
Accordingly, we remove the boundary $S:=\{x\in V \mid N_{L/K}(\alpha_x)=0\}$
and set $W:=V\setminus S$.
Then inversion is regular on $W$, and hence $(W,\overset{-1}{\ast})$ is an algebraic $K$-group.
That is, $W=V^\times=V\setminus S\quad {\text where}\quad S:=\{{\bf x}\in V \mid N_{L/K}(\alpha_{\bf x})=0\}$.
By Proposition~\ref{prop:CA}, $\overset{-1}{\ast}$ is commutative and associative.
Thus $W$ is an algebraic $K$-group with respect to $\overset{-1}{\ast}$.

We may therefore view $(W,\overset{-1}{\ast})$ as a commutative algebraic $K$-group. 
Let $T\subset R_{L/K}(\Gm)$ be the $K$-subgroup defined by the norm condition $N_{L/M}(\alpha)\in {\mathbb G}_{m,K}$. 
In the next proof we construct an explicit isomorphism of algebraic $K$-groups $W\simeq T$, and then verify that $T$ is an algebraic $K$-torus.

\begin{proof}[Proof of Theorem~\ref{thm:structureV}]
\quad\par
{\bf Step 1: $W\simeq T$ is an algebraic $K$-group.}\ Keep the notation of Theorem~\ref{thm:structureV}. 
Put $f:=[L:M]$; then $[L:K]=ef=l-1$.
Define a $K$-subgroup of $R_{L/K}(\mathbb{G}_m)$ by
\[
T:=N_{L/M}^{-1}(\mathbb{G}_{m,K})=\Bigl\{\alpha\in R_{L/K}(\mathbb{G}_m)\ \Bigm|\ N_{L/M}(\alpha)\in \mathbb{G}_{m,K}\Bigr\}\subset R_{L/K}(\mathbb{G}_m)
\]
where $\mathbb{G}_{m,K}$ is regarded as the diagonal subtorus of $R_{M/K}(\mathbb{G}_m)$ induced by $K \subset M$.

Let $\Psi:\A^{l-1}_K\to R_{L/K}(\A^1)$ be the $K$-linear isomorphism given on $K$-points by $\Psi({\bf x})=\alpha_{\bf x}$.
By Lemma~\ref{lem:BEW2}, the defining equations of $V$ are equivalent to the condition $N_{L/M}(\Psi({\bf x}))\in K$; in particular, for ${\bf x}\in W$ one has $N_{L/M}(\Psi({\bf x}))\in K^\times$.
Hence $\Psi$ restricts to a morphism $\Psi:W\to T$.
Moreover, by Theorem~\ref{thm:BF2} with $d=-1$, the operation $\overset{-1}{\ast}$ on $W$ is transported by $\Psi$ to multiplication in $R_{L/K}(\mathbb{G}_m)$, and the condition $N_{L/M}(\cdot)\in \mathbb{G}_{m,K}$ is preserved. 
Therefore $\Psi:W\to T$ is an isomorphism of algebraic $K$-groups.
\quad\par
{\bf Step 2: $T$ is an algebraic $K$-torus.}\ We have shown that $\Psi:W\to T$ is an isomorphism of algebraic $K$-groups over $K$.
To complete the proof, it remains to verify that $T$ is an algebraic $K$-torus; equivalently, after base change to an algebraic closure $\overline{K}$ of $K$, we construct an explicit isomorphism
\[
\Phi:(\mathbb{G}_{m,\overline{K}})^{\,1+e(f-1)} \xrightarrow{\ \sim\ } T_{\overline{K}}.
\]

For this, we use the standard description of Weil restriction after base change to $\overline{K}$.
We have
\[
L\otimes_K \overline{K}\simeq \prod_{\sigma\in\Hom_K(L, \overline{K})} \overline{K},\qquad
M\otimes_K \overline{K}\simeq \prod_{\rho\in\Hom_K(M, \overline{K})} \overline{K},
\]
and hence
\[
(R_{L/K}(\mathbb{G}_m))_{ \overline{K}}\simeq (\mathbb{G}_{m, \overline{K}})^{[L:K]}=(\mathbb{G}_{m, \overline{K}})^{ef},\qquad
(R_{M/K}(\mathbb{G}_m))_{ \overline{K}}\simeq (\mathbb{G}_{m, \overline{K}})^{e}.
\]
Choose an ordering of $\Hom_K(L, \overline{K})$ compatible with restriction to $M$, so that for each $\rho\in\Hom_K(M, \overline{K})$ the set of extensions $\sigma$ of $\rho$ has cardinality $f$.
Thus we may write coordinates on $(R_{L/K}(\mathbb{G}_m))_{ \overline{K}}\simeq (\mathbb{G}_{m, \overline{K}})^{ef}$ as $(x_{j,k})$ with $1\le j\le e$ and $1\le k\le f$.
Under these identifications, the base change of the norm map becomes
\[
(N_{L/M})_{ \overline{K}}:(\mathbb{G}_{m, \overline{K}})^{ef}\longrightarrow (\mathbb{G}_{m, \overline{K}})^e,\qquad
(x_{j,k})\longmapsto \Bigl(\prod_{k=1}^f x_{j,k}\Bigr)_{1\le j\le e},
\]
and the inclusion $\mathbb{G}_{m,K}\hookrightarrow R_{M/K}(\mathbb{G}_m)$ becomes the diagonal embedding
\[
\mathbb{G}_{m, \overline{K}}\hookrightarrow (\mathbb{G}_{m, \overline{K}})^e.
\]
Consequently,
\[
T_{ \overline{K}}
=\Bigl\{(x_{j,k})\in(\mathbb{G}_{m, \overline{K}})^{ef}\ \Bigm|\ \prod_{k=1}^f x_{1,k}=\cdots=\prod_{k=1}^f x_{e,k}\Bigr\}.
\]

Define a morphism $\Phi:(\mathbb{G}_{m, \overline{K}})^{\,1+e(f-1)}\to T_{ \overline{K}}$ by coordinates $(t,u_{j,k})$ and
\[
x_{j,k}=u_{j,k}\ (k=1,\dots,f-1),\qquad
x_{j,f}=\frac{t}{\prod_{k=1}^{f-1}u_{j,k}}.
\]
Then $\prod_{k=1}^f x_{j,k}=t$ for every $j$, and $\Phi$ takes values in $T_{ \overline{K}}$.
Conversely, given $(x_{j,k})\in T_{ \overline{K}}$, set $t=\prod_{k=1}^f x_{1,k}$ and $u_{j,k}=x_{j,k}$ for $k\le f-1$; the defining equalities imply that each $x_{j,f}$ is recovered by the same formula.
These constructions are regular and mutually inverse, hence $\Phi$ is an isomorphism. Therefore
\[
T_{ \overline{K}}\simeq (\mathbb{G}_{m, \overline{K}})^{\,1+e(f-1)}=(\mathbb{G}_{m, \overline{K}})^{\,l-e},
\]
and $T$ is an algebraic $K$-torus. Since $W\simeq T$, the theorem follows.
\end{proof}

\begin{proof}[Proof of Corollary~\ref{cor:CI}]
Set
\[
W:=\{{\bf x}\in V \mid N_{L/K}(\alpha_{\bf x})\neq 0\}\subset V.
\]
Consider the $K$-linear isomorphism $\Psi:\A_K^{l-1}\xrightarrow{\ \sim\ }R_{L/K}(\A^1)$, given on $K$-points by $\Psi({\bf x})=\alpha_{\bf x}$, and set
\[
\overline T:=\{\alpha\in R_{L/K}(\A^1)\mid N_{L/M}(\alpha)\in \A^1_K\}.
\]
Then we have $T=\overline T\cap R_{L/K}({\mathbb G}_m)$.
By Lemma~\ref{lem:BEW2} (equivalently, Step~1 of Theorem~\ref{thm:structureV}), the defining equations of $V$ are equivalent to $N_{L/M}(\Psi({\bf x}))\in K$, hence $\Psi(V)=\overline T$, and by definition $\Psi(W)=T$.
Since $R_{L/K}({\mathbb G}_m)\subset R_{L/K}(\A^1)$ is a nonempty open subscheme, it follows that $T$ is a nonempty open dense subscheme of $\overline T$; transporting via $\Psi$, we conclude that $W$ is a nonempty Zariski open dense subset of $V$.
Since $W$ is irreducible, $V$ is also irreducible.
Therefore $\dim V=\dim W$.

By Theorem~\ref{thm:structureV}, $W\simeq T$ is an algebraic $K$-torus of dimension $l-e$; hence $\dim V=l-e$.
Thus
\[
{\rm codim}_{\A_K^{l-1}}(V)=(l-1)-(l-e)=e-1.
\]
Since $V\subset \A_K^{l-1}$ is defined by exactly $e-1$ equations $h_m({\bf x})=0$ $(m=1,\dots,e-1)$,
each of which is an $f$-ic form, it follows that $V$ is a complete intersection in $\mathbb{A}^{l-1}_K$, cut out by $e-1$ $f$-ics over $K$.
\end{proof}

\begin{proof}[Proof of Corollary~\ref{cor:lifting}]
Let $x\in W_{{\bf p}}(K)$ and $y\in W_{{\bf q}}(K)$. By Theorem~\ref{mainth3} we have $h(x\overset{-1}{\ast}y)=h(x)h(y)={\bf pq}$ , hence $x\overset{-1}{\ast}y\in W_{{\bf pq}}(K)$.
\end{proof}

\begin{remark}\label{rem:fibers}
Since $h$ is multiplicative with respect to the group law $\overset{-1}{\ast}$
on $W(K)$, the map $h:W(K)\to K^\times$ is a group homomorphism, and
$W_{\bf p}(K)=h^{-1}({\bf p})$ for ${\bf p} \in K^{\times}$.
Under the isomorphism of Theorem~\ref{thm:structureV}, the fiber $W_1$ is the
norm one torus
\[
W_1=
\ker\bigl(N_{L/M}:R_{L/K}(\mathbb{G}_m)\longrightarrow R_{M/K}(\mathbb{G}_m)\bigr)=R_{M/K}(R_{L/M}^{(1)}(\mathbb{G}_m)).
\]
In particular,
\[
W_1(K)=\{\alpha\in L^\times\mid N_{L/M}(\alpha)=1\}.
\]

The group $W_1(K)$ acts on $W_{\bf p}(K)\times W_{\bf q}(K)$ by
\[
u\cdot(x,y):=
\bigl(x\overset{-1}{\ast}u,\,
y\overset{-1}{\ast}u^{-1}\bigr),
\]
where inverses are taken with respect to the group law $\overset{-1}{\ast}$ on
$W(K)$. 
This action preserves the fibers of the map in
Corollary~\ref{cor:lifting}, and each nonempty fiber is a principal homogeneous
space under $W_1(K)$.

Indeed, if $(x,y)$ and $(x',y')$ have the same image in $W_{{\bf p}{\bf q}}(K)$, then the
unique element carrying $(x,y)$ to $(x',y')$ is
\[
u=x^{-1}\overset{-1}{\ast}x'\in W_1(K).
\]
Moreover, if $y_0\in W_{\bf q}(K)$ is fixed, then translation by $y_0$ gives a
bijection of $K$-rational fibers
\[
W_{\bf p}(K)\xrightarrow{\ \sim\ }W_{{\bf p}{\bf q}}(K),\qquad
x\longmapsto x\overset{-1}{\ast}y_0,
\]
with inverse
\[
z\longmapsto z\overset{-1}{\ast}y_0^{-1}.
\]
Thus, whenever $W_{\bf q}(K)$ is nonempty, the map in Corollary~\ref{cor:lifting} is
surjective, and its fibers are precisely the orbits of the $W_1(K)$-action
defined above.
\end{remark}

\begin{remark}[The boundary $S$ and torsion points]\label{rem:boundary-torsion}
\quad \\
\begin{enumerate}
\item
Under the standing assumption $[K(\zeta_l):K]=l-1$, the map
$\iota : K^{l-1}\to L\ ;\ {\bf x}\mapsto \alpha_{\bf x}$ is a $K$-linear isomorphism.
Hence
\[
S(K)=\{x\in V\mid x\in K^{\,l-1},\ N_{L/K}(\alpha_x)=0\}=\{0\}.
\]

\item
Fix an algebraic closure $ \overline{K}$ of $K$. Since
\[
N_{L/K}(\alpha)=\prod_{\sigma\in\Hom_K(L, \overline{K})}\sigma(\alpha)
\qquad(\alpha\in L),
\]
we have
\[
\quad \quad \quad S( \overline{K})=\{x\in V( \overline{K})\mid N_{L/K}(\alpha_x)=0\}
\subset
\bigcup_{\sigma\in\Hom_K(L, \overline{K})}\{x\in K^{\,l-1}\otimes_K \overline{K}\mid \sigma(\alpha_x)=0\}.
\]
Thus, viewed inside $K^{\,l-1}\otimes_K \overline{K}\simeq  \overline{K}^{\,l-1}$, the set of
solutions of $N_{L/K}(\alpha_x)=0$ is a union of $ \overline{K}$-hyperplanes given by the
linear equations $\sigma(\alpha_x)=0$.

On the other hand, using the coordinates $(x_{j,k})$ ($1\le j\le e,\ 1\le k\le f$)
from Step~2 in the proof of Theorem~\ref{thm:structureV}, one has
\[
V( \overline{K})=
\Bigl\{(x_{j,k})\in  \overline{K}^{\,ef}\ \Big|\ \prod_{k=1}^f x_{1,k}
=\prod_{k=1}^f x_{2,k}
=\cdots
=\prod_{k=1}^f x_{e,k}\Bigr\}.
\]
If we define $t:=\prod_{k=1}^f x_{1,k}$, then on $V( \overline{K})$ we obtain
\[
N_{L/K}(\alpha_x)=\prod_{j=1}^e\prod_{k=1}^f x_{j,k}
=\prod_{j=1}^e\Bigl(\prod_{k=1}^f x_{j,k}\Bigr)=t^e.
\]
In particular,
\[
S( \overline{K})=\{(x_{j,k})\in V( \overline{K})\mid t=0\}.
\]
Equivalently, $S( \overline{K})$ is the union of the subsets
\[
B_{\mathbf k}( \overline{K}):=
\Bigl\{(x_{j,k})\in V( \overline{K})\ \Big|\ x_{1,k_1}=x_{2,k_2}=\cdots=x_{e,k_e}=0\Bigr\}
\]
where $\mathbf k=(k_1,\dots,k_e)\in\{1,\dots,f\}^e$,
so there are $f^e$ maximal boundary pieces inside $V( \overline{K})$ (each obtained by
forcing one coordinate to be $0$ in each block).

\item
Under the identification $W_{ \overline{K}}\simeq T_{ \overline{K}}\simeq
\mathbb{G}_{m, \overline{K}}\times(\mathbb{G}_{m, \overline{K}})^{e(f-1)}$ from the proof of Theorem~\ref{thm:structureV},
an element of order $n$ in $W( \overline{K})$ has all coordinates in the group
$\mu_n( \overline{K})=\{\xi\in \overline{K}^\times\mid \xi^n=1\}$.
In particular, if $x\in W( \overline{K})$ has order $n$, then $N_{L/K}(\alpha_x)\in \mu_n( \overline{K})$.
Hence a point $x\in W( \overline{K})$ with $N_{L/K}(\alpha_x)$ not a root of unity
cannot be a torsion point of $W( \overline{K})$.

\item
In the examples of Section~~\ref{S4-3}, the preceding criterion shows
that the classical trivial solutions need not be torsion points.  
For example, in Dickson's system for $l=5$, written in the normalization
$16P=x_1^2+125x_2^2+50x_3^2+50x_4^2$, the trivial solution
$(x_1,x_2,x_3,x_4)=(4p^r,0,0,0)$ corresponds to the parameter $P=p^{2r}$.
Its norm $N_{L/K}(\alpha_x)$ is a nonzero $p$-power up to normalization, so
$x\in W$ and $x$ is not torsion in $W$ by (3).
\end{enumerate}

\end{remark}

Corollary~\ref{cor:lifting} and Remark~\ref{rem:fibers} describe the compatibility between the group law on $W$ and 
the fibers $W_{\bf p}(K)$ of the multiplicative form $h$: 
the product map sends $W_{\bf p}(K)\times W_{\bf q}(K)$ to $W_{{\bf p}{\bf q}}(K)$, 
and each nonempty fiber of this map is a principal homogeneous space under $W_1(K)$, 
where $W_1$ is the norm one torus.

For every $f\ge 2$, the dense open torus $W\subset V$ provides a group-theoretic way to produce new elements of the fibers $W_{\bf p}(K)$: they are transported by the power maps $[n]:W\to W$, and more generally by the group law on $W$. 
Thus the explicit multiplicative $f$-ic forms constructed here give rise to higher-degree Diophantine systems whose solutions over $K$, on the dense open part $W$, are organized by this torus structure.

In the quadratic case $f=2$, this recovers the solution-lifting formulas of
Hoshi--Kanai~\cite{HK22}. 
More precisely, in the prime-power setting considered there, for instance when ${\bf p}={\bf q}=p^r$, 
iteration of the power maps gives the geometric counterpart of the lifting procedure obtained
from Davenport--Hasse's lifting theorem for Jacobi sums and from the corresponding $n$-fold products of multiplication matrices.

\begin{acknowledgment}
Before getting a Ph.D., the first-named author stayed at the
University of Regensburg for one year: 2004--2005. 
He thanks his teacher Manfred Knebusch 
who led him to the extensive world of quadratic forms. 
\end{acknowledgment}


\end{document}